\documentclass[12pt]{amsart}
\usepackage{amssymb,amsmath,amsthm,verbatim,hyperref,enumerate,mathtools}
\usepackage{tikz}
\usetikzlibrary{positioning}

\theoremstyle{plain}
\newtheorem{thm}{Theorem}[section]
\newtheorem{lem}[thm]{Lemma}

\newtheorem{cor}[thm]{Corollary}
\newtheorem{obs}[thm]{Observation}
\newtheorem{prop}[thm]{Proposition}
\theoremstyle{definition}
\newtheorem{defn}[thm]{Definition}
\newtheorem{remark}[thm]{Remark}
\newtheorem{ex}[thm]{Example}
\newtheorem{rk}[thm]{Remark}
\newtheorem{qn}[thm]{Question}

\newcommand\I{{\mathcal{I}}} 
\renewcommand\S{{\mathcal{S}}}
\renewcommand\L{{\mathcal{L}}}
\newcommand\D{{\mathcal{D}}}
\DeclareMathOperator{\er}{er}
\DeclareMathOperator{\fp}{fp}
\newcommand\bh{{\overline{h}}}
\newcommand{\lex}{<_{\mathrm{lex}}}

\newcommand{\T}{{\mathcal{T}}}
\newcommand{\card}[1]{\left|#1\right|}
\DeclareMathOperator{\shift}{shift}
\DeclareMathOperator{\sgn}{sgn}
\DeclareMathOperator{\st}{st}

\def\N{-- ++(0,1)}
\def\E{-- ++(1,0)}
\newcommand{\cross}[2]{ \draw[thick] (#1,#2)--(#1-1,#2-1);
     \draw[thick] (#1-1,#2)--(#1,#2-1);}
\newcommand{\perm}[1]{
 \foreach \y [count=\x] in #1 {
     \cross{\x}{\y}
}}

\newcommand{\delete}[1]{}

\title{Complexes of pattern-avoiding injective words}

\author{Sergi Elizalde}
\address{Department of Mathematics, Dartmouth College, Hanover, NH 03755}
\email{sergi.elizalde@dartmouth.edu}

\author{Philip Hanlon}
\address{Department of Mathematics, Dartmouth College, Hanover, NH 03755}
\email{philip.j.hanlon@dartmouth.edu}

\author{Patricia Hersh}
\address{Department of Mathematics, University of Oregon, Eugene, OR 97403}
\email{plhersh@uoregon.edu}

\begin{document}

\begin{abstract}
The complex of injective words is a cell complex that arises in a number of different areas. It has applications to proving homological stability and to the study of group cohomology, and it is closely related to the random-to-random  Markov chain.  
This complex was 
first 
studied by Farmer, who proved it has the homology of a wedge of  
top-dimensional spheres. Later, Bj\"orner and Wachs established its shellability, and Reiner and Webb uncovered its $\S_n$-module structure, observing in the process that the rank of its top homology group is the $n$th derangement number.

We introduce natural subcomplexes of the complex of injective words  by fixing a permutation pattern $\sigma$ and considering only those injective  words in the alphabet $\{1,2,\dots,n\} $ that avoid $\sigma$. We prove that such pattern-avoiding complexes are shellable if $\sigma$ begins or ends with its largest or smallest letter, and we construct homology bases for the  complexes avoiding such patterns.
For patterns of length 3, all of which have this property, we show that the rank of the top homology of the resulting complex is a Riordan number. All but four patterns of length 4 also  have this property, and  for two of the remaining four patterns, we establish shellability using a different method.

We also introduce a technique to use enumerative combinatorics to prove shellability, and we apply it to the complex of separable injective words, thereby deducing shellability in this case.    Along the way, we give a combinatorial formula for all of the $h$-numbers in the full complex of injective words, as well as for each of  the  subcomplexes which we prove are shellable.  
Going in the other direction, we use shellability of complexes of pattern-avoiding injective words to
deduce new refined counting formulas for pattern-avoiding permutations.
\end{abstract}


\maketitle

\section{Introduction}

The complex of injective words, denoted by $\I_n$, is a cell complex of Boolean type (a slight generalization of the notion of simplicial complex, see Section~\ref{Boolean-section} for definitions) whose faces are all the ``injective words'' over the alphabet $[n]\coloneqq\{1,2,\dots,n\}$, that is, words with no repeated letters.  
A face of $\I_n$ is in the closure of another face  if and only if the former injective word is a subword of the latter injective word. We use the term {\em subword} to refer to any subsequence, not necessarily consecutive. For example, in $\I_5$, the face $351$ is in the closure of the face $32514$, but not in the closure of the face $4531$.
In particular, the  maximal faces (known as facets) of $\I_n$ correspond to permutations of $[n]$, namely elements of the symmetric group $\S_n$.
Multiple faces can have the same set of vertices; for example, the edges $12$ and $21$ in $\I_n$ for any $n\ge 2$ each have the same two vertices, $1$ and $2$, as their endpoints.

Complexes of injective words have arisen in a number of different settings.   Reiner and Webb used them in  \cite{WebbR} to develop a general method for calculating  group cohomology in terms of subgroups.  Miller and Wilson used them in a spectral sequence argument in \cite{MW} establishing higher representation stability for the rational homology of configuration spaces of ordered points on noncompact manifolds. 
Complexes of injective words  have also appeared in work on stability in other contexts by Bestvina \cite[Proposition 6]{Best} and by Randal-Williams \cite[Proposition 3.2]{R-W}.  Their topological structure has been further studied by Chacholski, Levi and Meshulam in \cite{CLM}. 
Boyd and Hepworth developed Temperley-Lieb algebra (and Iwahori-Hecke algebra) analogues of complexes of injective words and used them for studying  stability  in these algebras in  \cite{BH}, \cite{BH2} and \cite{Hep}.  Jonsson and Welker developed a different generalization of the complex of injective words in \cite{JW}, motivated by examples of these more general complexes arising in algebraic K-theory.  Hanlon and Hersh related the discrete Laplacian for the complex of injective words   to the random to random shuffle operator, a well-known Markov chain, in \cite{Hanlon-Hersh2}; they proved that the former has integer spectrum (after suitable scaling) if and only if the latter does, and then later \cite{DS} proved that the (suitably scaled) random to random shuffle operator indeed has integer spectrum.

Farmer proved in~\cite{Fa} that $\I_n$ has the homology of a wedge of top-dimensional spheres.
It was subsequently proven by Bj\"orner and Wachs in \cite{BW}  that the dual to the face poset for $\I_n$ is lexicographically shellable (and graded), implying  
homotopy equivalence of $\I_n$ to a wedge of top dimensional spheres. An  expression was provided in \cite{BW} for the number of spheres in this wedge of spheres, answering a question raised by Farmer by giving a considerably simpler expression than  previously appeared in \cite{Fa}. Reiner and Webb showed in \cite{WebbR} that the absolute value of the reduced Euler characteristic of $\I_n$ (and hence the number of spheres) is a derangement number $d_n$. They did this by counting the number of faces of each dimension and expressing the reduced Euler characteristic as the alternating sum
$$\sum_{i=0}^n (-1)^{i-1}\binom{n}{i}i!.$$ 
In Section~\ref{sec:In} we will give a very different  proof of this fact, using shellability and a bijection of Foata to show that the top $h$-number is $d_n$. Reiner and Webb in fact  gave a representation-theoretic enrichment of this alternating sum expression for the derangement number 
in \cite{WebbR}.   
They also provided a representation theoretic explanation 
for a  symmetric function proof due to Desarmenien and Wachs \cite{DW} of the classical 
recurrence  $d_n = nd_{n-1} + (-1)^n$.
A Hodge type decomposition for the complex of injective words was given by Hanlon and Hersh in \cite{Hanlon-Hersh2}.  A homology basis for the complex of injective words was   constructed by Miller and Wilson  in \cite{MW}. 
 The $\S_n$-module structure for the rank-selected homology was studied by Athanasiadis in~\cite{Ath}.  

In this paper we introduce a natural subcomplex of $\I_n$, consisting of the injective words that avoid a given pattern $\sigma\in\S_k$, in the standard sense of having no subword whose elements are in the same relative order as those of $\sigma$.
We denote this subcomplex by $\I^\sigma_n$.
The face poset of $\I^{312}_3$ is drawn in Figure~\ref{fig:312poset}.
If $\sigma$ is a pattern of length 2, then $\I_n^\sigma$ is just a simplex. 
Thus, the rest of the paper will focus on patterns of length at least~3.

\begin{figure}[htb]
    \begin{tikzpicture}[scale=.9,
  every node/.style={font=\small,circle,draw,minimum size=7mm,inner sep=0pt},
]

\node (e) at (5.67,0) {$\epsilon$};
\node (a1) at (2.50,1.83) {1};
\node (a2) at (5.67,1.83) {2};
\node (a3) at (8.83,1.83) {3};
\node (b12) at ( 1.00,3.83) {12};
\node (b13) at ( 2.87,3.83) {13};
\node (b21) at ( 4.73,3.83) {21};
\node (b23) at ( 6.60,3.83) {23};
\node (b31) at ( 8.47,3.83) {31};
\node (b32) at (10.33,3.83) {32};
\node (c123) at ( 1.33,5.83) {123};
\node (c132) at ( 3.50,5.83) {132};
\node (c213) at ( 5.67,5.83) {213};
\node (c231) at ( 7.83,5.83) {231};
\node (c321) at (10.00,5.83) {321};

\draw (e) -- (a1);
\draw (e) -- (a2);
\draw (e) -- (a3);

\draw (a1) -- (b12);
\draw (a1) -- (b13);
\draw (a1) -- (b21);
\draw (a1) -- (b31);
\draw (a2) -- (b12);
\draw (a2) -- (b21);
\draw (a2) -- (b23);
\draw (a2) -- (b32);
\draw (a3) -- (b13);
\draw (a3) -- (b31);
\draw (a3) -- (b23);
\draw (a3) -- (b32);

\draw (c123) -- (b12);
\draw (c123) -- (b13);
\draw (c123) -- (b23);
\draw (c132) -- (b12);
\draw (c132) -- (b13);
\draw (c132) -- (b32);
\draw (c213) -- (b13);
\draw (c213) -- (b21);
\draw (c213) -- (b23);
\draw (c231) -- (b21);
\draw (c231) -- (b23);
\draw (c231) -- (b31);
\draw (c321) -- (b21);
\draw (c321) -- (b31);
\draw (c321) -- (b32);
\end{tikzpicture}
\label{fig:312poset}
\caption{The face poset of the complex $\I^{312}_3$.}
\end{figure}

For patterns $\sigma$ of length 3, we show that the reduced Euler characteristic of $\I_n^\sigma$, up to sign, is again an interesting number, namely the Riordan number $R_n$. 
Comparing this with the  expression for the reduced Euler characteristic as an alternating sum of face counts yields the seemingly new identity
\begin{equation}\label{eq:Riordan-Catalan}
   \sum_{i=0}^n (-1)^{n-i}\binom{n}{i}C_{i} = R_n,
\end{equation}    
where $C_{i}$ denotes the $i$th Catalan number.  We will discuss this identity in more depth later in the introduction, as an illustration of the interplay of topological and enumerative methods prevalent throughout this paper.  While the left side of this identity is rather straightforward, our justification for  the right side involves shellability theory, bijective combinatorics and  generating function techniques.

Now to the precise definition of $\I_n^\sigma$.  Given an injective word $\pi=\pi_1\pi_2\dots\pi_r$ and a pattern $\sigma\in \S_k$, we say that $\pi$ {\em contains} $\sigma$ if there exist indices $i_1<i_2<\dots<i_k$ such that the subword $\pi_{i_1}\pi_{i_2}\dots\pi_{i_k}$ is order-isomorphic to $\sigma_1\sigma_2\dots\sigma_k$, meaning that the entries are in the same relative order. If $\pi$ does not contain $\sigma$, we say that $\pi$ {\em avoids} $\sigma$, or that $\pi$ is {\em $\sigma$-avoiding}. 
Note that this notion of pattern containment is different from the one that we used to determine if a face of $\I_n$ is contained in the closure of another, where we considered the exact values of the entries of the subword and not only their relative order.
More generally, if $\Sigma$ is any collection of patterns (not necessarily of the same size), we say that $\pi$ avoids $\Sigma$ if it avoids every pattern in~$\Sigma$.
We will assume in the rest of the paper that all patterns have length at least $2$. This is to avoid the degenerate case where the only permutation avoiding $\sigma$ is the empty permutation.

The next simple proposition shows that the $\sigma$-avoiding injective words over the alphabet $[n]$ form a subcomplex of $\I_n$. 

\begin{prop}\label{subcomplex-property} 
For any pattern $\sigma\in\S_k$, 
the set of $\sigma$-avoiding injective words over the alphabet $[n]$ forms a subcomplex $\I^\sigma_n$ 
of the complex $\I_n$ of injective words.  More generally, for any collection $\Sigma$ of patterns, the set of injective words over the alphabet $[n]$ avoiding $\Sigma$ forms a subcomplex $\I^\Sigma_n$  of~$\I_n$.
\end{prop}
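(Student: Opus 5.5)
The plan is to show that the set of $\sigma$-avoiding injective words is closed under passing to faces in the closure. In $\I_n$, a face $\tau$ lies in the closure of a face $\pi$ exactly when $\tau$ is a subword of $\pi$. So a set of injective words forms a subcomplex of this Boolean-type cell complex precisely when it is a down-set in the face poset, i.e.\ closed under taking subwords. It also has to contain the empty word, which corresponds to the empty face. Once the set is known to be a down-set, the cells it indexes, together with their closures, lie inside the set, so they form a subcomplex. I will take this characterization of subcomplexes from the definitions of Section~\ref{Boolean-section}.

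The key step is the contrapositive: if $\tau$ is a subword of $\pi$ and $\tau$ contains $\sigma$, then $\pi$ contains $\sigma$. Write $\tau=\pi_{j_1}\pi_{j_2}\dots\pi_{j_r}$ with $j_1<\dots<j_r$. Suppose $\tau$ has an occurrence of $\sigma$ at positions $i_1<\dots<i_k$, meaning $\tau_{i_1}\dots\tau_{i_k}$ is order-isomorphic to $\sigma$. Then the indices $j_{i_1}<\dots<j_{i_k}$ pick out the very same letters inside $\pi$. Those letters are in the same relative order, so $\pi$ contains $\sigma$. Consequently every subword of a $\sigma$-avoiding word avoids $\sigma$.

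The empty word avoids every pattern of length at least $2$, so it belongs to the set; this is where the standing assumption $k\ge 2$ is used. For a collection $\Sigma$, the set of words avoiding $\Sigma$ is the intersection over $\sigma\in\Sigma$ of the sets of $\sigma$-avoiding words. An intersection of down-sets is again a down-set, and each of these contains the empty word, so the general statement follows.

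I do not expect a real obstacle here. The only point needing care is to confirm that the Boolean-complex notion of subcomplex is exactly "down-closed set of faces", so that no extra gluing conditions have to be checked. This holds because the closure of each cell consists only of cells indexed by its subwords.
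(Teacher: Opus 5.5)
Your proof is correct and follows the same route as the paper, which simply observes that avoiding a pattern (or collection of patterns) is preserved under taking subwords; you have just written out the index bookkeeping and the intersection step explicitly. One small correction: the empty word avoids every pattern of length $k\ge 1$, so the standing assumption $k\ge 2$ is not what makes the empty word belong to the set (in the paper that assumption only rules out the degenerate case where the empty word is the only avoider).
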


\begin{proof}
It is clear by definition  that avoiding a given  pattern  (or collection of patterns) is a property that is  preserved under taking subwords.  
\end{proof}

We will prove that $\I_n^{\sigma }$ is pure of dimension $n-1$ for large classes of patterns $\sigma $,  including all patterns of length at most 4 and all patterns starting with their largest letter, as well as for 
$\I_n^{\Sigma}$ 
for certain collections $\Sigma$ of patterns.  For $\Sigma = \{ 2413, 3142 \}$, our result ensures that 
the complex of separable injective words over the alphabet $[n]$ 
is pure of dimension $n-1$.  These properties regarding purity will be needed for our shellability results later in the paper.

We will prove that for all patterns $\sigma $ of length at most 3, as well as most patterns of length 4, the complex $\I^\sigma_n$ is shellable. We will also characterize the facets closing off spheres during these shellings of $\I_n^{\sigma }$, called {\em homology facets}. Counting these facets allows us  to calculate the reduced Euler characteristic.  For the patterns $312$ and $321$, and more generally for any pattern $\sigma$ of any length  starting with its largest letter, we prove that the lexicographic order of the facets gives a shelling of $\I_n^{\sigma }$.  We use symmetries to deduce shellability of $\I_n^{\sigma }$ for all other patterns $\sigma $ of length 3, and for all but four of the patterns of length 4.  We then give a different proof of shellability that covers two of these four patterns, leaving the question open for just two patterns of length~4.  

Now back to our identity~\eqref{eq:Riordan-Catalan}.  By counting faces of each dimension in $\I_n^{\sigma }$, it is easy to see that, for any $\sigma\in\S_3$, the reduced Euler characteristic of $\I_n^{\sigma }$ is equal to
$\sum_{i=0}^n(-1)^{i-1} \binom{n}{i}C_{i}$.
Indeed, the number of permutations in $\S_i$ avoiding any given $\sigma\in\S_3$ is $C_i$, and hence that the number of $(i-1)$-dimensional faces in $\I_n^{\sigma }$ is $\binom{n}{i}C_{i}$.   From our result  that 
the complex $\I_n^{\sigma }$ 
is shellable for any $\sigma\in\S_3$, 
we deduce that 
the absolute value of the reduced Euler characteristic of $\I_n^{\sigma }$ equals the rank of the top homology of 
$\I_n^{\sigma }$. 
Moreover, it equals the number of homology facets in the shelling.  For $\sigma \in \{312, 321\}$, the lexicographic order is a shelling order, and we prove that the number of homology facets equals the number of  $\sigma$-avoiding permutations in $\S_n$ such that each  left-to-right maximum is immediately followed by a smaller entry. 
In Proposition~\ref{Riordan-as-Euler-char}, we deduce from  a bijection of Foata that such permutations are counted by the Riordan numbers, implying that the rank of the top homology (or equivalently the absolute value of the reduced Euler characteristic) for $\I_n^{\sigma }$ for any $\sigma\in\S_3$ is the $n$th Riordan number.
Multiplying the face number expression for the  reduced Euler characteristic by its sign of $(-1)^{n-1}$ then yields the identity~(\ref{eq:Riordan-Catalan}).

One of the highlights of this paper is the sort of interplay between topological and enumerative combinatorics illustrated (in one direction) in the above discussion of Equation~\eqref{eq:Riordan-Catalan}.   There is a rich history of using topological combinatorics (and especially shellability theory)  to deduce formulas for reduced Euler characteristic and thereby calculate M\"obius functions (see e.g.~\cite{St-EC1} for the technique, \cite{BW-Bruhat} for the example of Bruhat order, and \cite[Section 3.8]{St-EC1} for the example of the partition lattice, expressed in the language of R-labellings).  We are not aware of any past work going in the other direction in the manner that we do in parts of this paper --- using enumerative results to prove shellability. Our interest in utilizing and further developing this bridge between topology and combinatorics, and particularly in applying it  to  complexes of pattern avoiding  injective words,  stemmed in part from a hope that adding a  topological perspective to the mix  could lead to new enumerative results as well as interesting  new questions regarding  permutation patterns.    We list many of these  
questions in Section~\ref{sec:open}.

Going in one direction, we use topological results which we establish in this paper, specifically shellability of the complex of pattern-avoiding injective words for large families of individual  patterns (including e.g.\ $4213$, $4132$, $4321$, $4312$ and $4123$), to deduce refined enumeration formulas for permutations avoiding these patterns with respect to a permutation statistic, namely the number of left-to-right maxima that are not descents --- we call such entries {\em ephemeral records}.
Going in the other direction,  we introduce  a new technique that allows us to deduce  shellability of a Boolean cell  complex  from enumerative results regarding what we call {\em blockers} in the complex, for a choice of total order on its maximal faces. We apply this technique to 
the complex of separable injective words (i.e., those avoiding both $2413$ and $3142$) to deduce shellability in this case.  Specifically, we show that the lexicographic order of the facets is a shelling.

Further analysis of our shelling for $\I_n^{\sigma }$ for patterns $\sigma $ whose first or last   letter is largest or smallest allows us  to derive    an explicit description of the homology facets 
in this shelling, i.e.\ of a set of facets having the 
property that they index  the spheres in the wedge of spheres comprising $\I_n^{\sigma }$ in such a manner that their duals form a cohomology basis.  
Using our combinatorial description of these homology facets as our starting point, we construct an explicit homology basis for 
$\I_n^{\sigma }$ for each such $\sigma $.
This result, which appears as Theorems~\ref{homology-basis-theorem} and~\ref{homology-theorem-two}, is one of the main contributions of this paper, together with our various enumeration formulas and results regarding shellability.

The way we introduce and utilize the notion of blockers 
to prove shellability is seemingly new, though it is a relatively minor variation on ideas already in the literature; nonetheless, it is a surprisingly useful new twist on familiar ideas.  Given any total order $F_1,\dots ,F_m$ on the maximal faces of a pure Boolean cell complex, we first introduce a statistic on the facets: the {\em number of blockers} 
of $F_j$ is the number of codimension one faces of $F_j$ that are shared with earlier facets.  Next define the {\em blocker vector} 
$(B_0,B_1,\dots )$ of our facet ordering  by letting $B_i$ be the number of facets having $i$ blockers.  We prove in Lemma~\ref{lem:hvector_implies_shelling} that any pure Boolean cell complex admitting  a facet ordering  whose resulting blocker vector equals the $h$-vector of the complex is shellable, and that this facet ordering is a shelling order.  We use this method to prove that the complex of separable injective words is shellable.

The paper is structured as follows. In Section~\ref{bg-section} we provide background on permutations and topological combinatorics.   
In Section~\ref{sec:new_technique} we introduce a new technique to prove shellability of a complex via enumeration of its facets with respect to the blocker vector statistic.  
In Section~\ref{sec:In} we characterize blockers in the full complex of injective words, and use this to determine all of the $h$-numbers for $\I_n$. We also give a general result involving blockers that is used later in the paper when we later add pattern-avoidance restrictions.
In Section~\ref{sec:GFs} we provide an identity relating the $h$-polynomials of the complex $\I_n^\sigma$ with the generating function for $\sigma$-avoiding permutations.  In Section~\ref{sec:purity}, we  prove that $\I_n^{\sigma }$ is pure of dimension $n-1$ for two large families of patterns, and that this property also holds for $\I_n^{\Sigma}$ when $\Sigma = \{ 2413, 3142 \}$.

In Section~\ref{shell-subcomplex} we prove shellability of the complex $\I_n^{\sigma}$ for any pattern $\sigma$ that starts or ends with its smallest or its largest letter.
We show that when $\sigma$ starts with its largest letter, the lexicographic order of the facets of $\I_n^{\sigma}$ is a shelling order. This allows us to interpret the $h$-polynomial of this complex as enumerating $\sigma$-avoiding permutations with respect to the number of ephemeral records.
In Section~\ref{sec:length3}, we deduce the shellability of $\I_n^{\sigma}$ for any pattern $\sigma$ of length 3, we obtain a generating function for its $h$-polynomials, and show that the rank of their top homology is given by the Riordan numbers.
In Section~\ref{sec:length4}, we likewise handle most patterns of length 4, some of which require more subtle arguments. We also derive new results about the refined enumeration of permutations avoiding patterns of length 4.

 In Section~\ref{sec:separable} we use the technique from Section~\ref{sec:new_technique} to show that the complex of separable injective words is shellable.
In Section~\ref{homology-section}, we use our shellings and our characterizations of homology facets in these shellings  to construct  a homology basis for the complex $\I_n^{\sigma}$ when $\sigma$ starts or ends with its largest or smallest letter.
We conclude by proposing  some open questions in Section~\ref{sec:open}.

 \section{Background}\label{bg-section}

\subsection{Permutations, patterns, and enumeration}\label{sec:permutations}
We denote by $\S_n$ the set of permutations of $[n]$. 
\begin{defn}\label{def:records} For a permutation $\pi=\pi_1\pi_2\dots\pi_n\in \S_n$, we say that $\pi_j$ is a {\em record} (or a {\em left-to-right maximum}) if $\pi_i<\pi_j$ for all $i<j$.
We say that $\pi_j$ is a {\em descent} if it is immediately followed by a smaller entry, that is, $\pi_j>\pi_{j+1}$.
A record $\pi_j$ will be called an {\em ephemeral record} if it is not a descent (that is, either $j=n$ or $\pi_j<\pi_{j+1}$). 
Denote by $\er(\pi)$ the number of ephemeral records of $\pi$.
\end{defn}

Let $\L_n$ denote the set of permutations in $\S_n$ that have no ephemeral records, namely, those where every record is also a descent. For example, the records of $\pi=34261587$ are $3,4,6,8$. Among these, $3$ is the only ephemeral record, so $\er(34261587)=1$. In particular, $34261587\notin\L_8$, but $43261587\in\L_8$.

Denote by $\S_n(\sigma)$ the set of $\sigma$-avoiding permutations in $\S_n$, namely, those containing no subsequence that is order-isomorphic to $\sigma$.
Similarly, denote by $\L_n(\sigma)$ the set of $\sigma$-avoiding permutations in $\L_n$. 
Denote the ordinary generating function for $\sigma$-avoiding permutations by
\begin{equation}\label{eq:Psigma}
P_\sigma(x)=\sum_{n=0}^\infty |\S_n(\sigma)|\,x^n.
\end{equation}
We say that two patterns $\sigma,\tau\in\S_k$ are {\em Wilf-equivalent} if
$P_\sigma(x)=P_\tau(x)$.

It is well known~\cite{SS} that, for 
every $\sigma\in \S_3$, we have $|\S_n(\sigma)|=C_n=\frac{1}{n+1}\binom{2n}{n}$, the $n$th Catalan number, and so
$$P_\sigma(x)=\frac{1-\sqrt{1-4x}}{2x}.$$
We denote this generating function by $C(x)$. 
In particular, all the patterns of length 3 are Wilf-equivalent.

We will use $R_n$ to denote the $n$th Riordan number. The first few terms of this sequence, 
starting at $n=0$, are 
$$ 1,0,1,1,3,6, 15, 36, 91, 232,\dots .$$ 
The Riordan numbers, which appear as sequence A005043 in \cite{OEIS}, 
arise in the multiple enumeration problems involving restricted lattice paths and set partitions. Their generating function is 
\begin{equation}\label{eq:Riordan}
\sum_{n=0}^\infty R_n\,x^n=\frac{2}{1+x+\sqrt{(1+x)(1-3x)}}.
\end{equation}

\subsection{Boolean cell complexes and other related terminology}\label{Boolean-section}

 The notion  of {\em cell complex of Boolean type}, also known as {\em Boolean cell complex},  is a generalization of the concept  of simplicial complex.  Unlike in simplicial complexes, multiple faces are allowed to  have the same set of vertices. 
 Boolean cell complexes share many pleasant properties of simplicial complexes (see e.g.\ \cite{Hersh,Rei}).
  Throughout the remainder of  Section~\ref{bg-section}, we let $\Delta$ denote a cell complex of Boolean type, and we set $d=\dim(\Delta) + 1$.  
  
More formally,  a Boolean cell complex is any regular CW complex 
 whose face poset has a unique minimal element and also has  the property that each of its lower intervals is a Boolean lattice.   The face posets of Boolean cell complexes are known as {\em simplicial posets}. 
 The notion of  Boolean cell complex was introduced in \cite{Bj,GS} and the notion of simplicial poset was introduced in \cite{St}.

\begin{ex}
The complex $\I_n$ is a cell complex of Boolean type, and so is the subcomplexes $\I_n^{\sigma }$ for any pattern $\sigma$. Note that $\I_n$ is not a simplicial complex if $n\ge 2$, since both of the edges $12$ and $21$ have the same two vertices $1$ and $2$.    
On the other hand, the closure of any cell has the combinatorial type of a simplex, the hallmark feature of a cell complex of Boolean type.
\end{ex}

\begin{rk}
In this section we state some results from the literature for the general case of Boolean cell complexes, even though the quoted results were originally proven only for simplicial complexes.  In each of these cases, it is straightforward to check that  the results do hold in this greater generality by exactly the same reasoning as in simplicial complexes.  We note that  \cite{GS} has a similar disclaimer, as it deals with the same issue.
\end{rk}

\begin{defn}
For each cell $F$ in $\Delta$, denote by $\overline{F}$ the closure of $F$, namely the subcomplex consisting of $F$ and all the cells appearing below $F$ in the face poset.  
\end{defn}

\begin{defn}
We say that $\Delta$ is {\em pure} if all its maximal faces (called {\em facets}) have the same dimension as each other.  
\end{defn}

\begin{ex}
The complex $\I_n$ is pure of dimension $n-1$.  We also prove that $\I^\sigma_n$ is pure of dimension $n-1$ for many patterns $\sigma $, by showing that all its maximal faces are permutations in $\S_n$.  We are not aware of any patterns $\sigma $ for which $\I_n^{\sigma }$ is not pure of dimension $n-1$. 
\end{ex}

 \subsection{$f$-numbers and $h$-numbers}\label{sec:fh}
The $f$-number $f_i=f_i(\Delta)$ is the number of $i$-dimensional faces of $\Delta$.
 We make the convention of including the empty face and treating it as a $(-1)$-dimensional face, so that $f_{-1} = 1$.

 When $\Delta $ is pure, one also associates to $\Delta $ a list of numbers called the $h$-numbers of $\Delta $.
 The $h$-number $h_i=h_i(\Delta)$ is defined  by the formula 
$$h_k = \sum_{i=0}^k(-1)^{k-i}\binom{d-i}{k-i} f_{i-1}$$ for $0\le k\le d$.   Conversely, the $f$-numbers of a pure complex   may be determined from the $h$-numbers via the formula
\begin{equation}\label{eq:f-from-h}
f_{j-1} = \sum_{i=0}^j{d-i\choose d-j}h_i
\end{equation}
for $0\le j\le d$.

The {\em $h$-polynomial} of $\Delta$ is defined as $h(\Delta,t)=\sum_{i=0}^d h_i t^i$. It will be convenient to also consider the {\em reverse $h$-polynomial} 
\begin{equation}\label{eq:h-reverse}
\bh(\Delta,t)=\sum_{i=0}^d h_i t^{d-i}=t^d h(\Delta,1/t).
\end{equation}
Equation~\eqref{eq:f-from-h} can be written in terms of polynomials as
\begin{equation}\label{eq:f-h}
\sum_{i=0}^d  f_{i-1} (t-1)^{d-i}  = \bh(\Delta,t).
\end{equation}

The $f$-vector (resp.\ $h$-vector) of of a $(d-1)$-dimensional Boolean cell complex $\Delta$ is the vector 
$(f_{-1}(\Delta ),f_0(\Delta ),\dots ,f_{d-1}(\Delta ))$ (resp.\ 
$(h_0(\Delta ),\dots ,h_d(\Delta )))$.

\begin{ex}
The simplicial complex on five vertices $\{ v_1,v_2,v_3,v_4,v_5\} $ whose facets are $\{ v_1,v_2,v_3\} $ and $\{ v_3,v_4,v_5 \} $ has $h$-vector $(1,2,-1)$. This illustrates that $h$-numbers may be negative for pure complexes which are not shellable.
\end{ex}

For more background on $f$-numbers and $h$-numbers, we refer the reader to~\cite{St2}.

 \subsection{Reduced Euler characteristic and Betti numbers}\label{euler-background}

 The {\em reduced Euler characteristic} of $\Delta$ is 
 \begin{equation}\label{eq:reduced}
 \tilde{\chi }(\Delta )=-1 + \sum_{i=0}^{d-1}(-1)^i f_i(\Delta ).
 \end{equation}
 Compared to the usual Euler characteristic, the additional $-1$ term arises from regarding the empty set as a $(-1)$-dimensional face.  
 
Recall that the {\em $i$th Betti number}  $\beta_i (\Delta )$ is the rank of the $i$th homology group of $\Delta $ with integer coefficients.  
 It is well known that
 $\tilde{\chi }(\Delta ) =-1 + \sum_{i=0}^{d-1} (-1)^i \beta_i(\Delta )$.

If $\Delta$ is pure, evaluating Equation~\eqref{eq:f-h} at $t=0$ yields 
\begin{equation}\label{eq:chi-h}
\tilde{\chi}(\Delta) = (-1)^{d-1}h_d(\Delta).
\end{equation}

\subsection{Shellability and some of its consequences}\label{sec:shellability}

A {\em shelling} of a 
Boolean cell complex $\Delta $ is a total order $F_1,\dots ,F_m$ 
on the maximal faces of $\Delta $  such that, for each $j\ge 2$, the maximal faces in $\overline{F}_j \cap (\cup_{i<j} \overline{F}_i)$ are all codimension one faces of $F_j$.    A complex is {\em shellable} if it admits a shelling.   Shellability of $\Delta$ implies that it is homotopy equivalent to a wedge of spheres.  When $ \Delta $ is pure and shellable, then this is a wedge of top-dimensional spheres, implying 
all of the  Betti numbers are $0$ except $\beta_0$ which equals 1 and the top Betti number which (up to a sign) equals the reduced Euler characteristic of the complex.

\begin{rk}\label{rk:combin-shelling}
A shelling may equivalently be defined more combinatorially  as a total ordering $F_1,\dots ,F_m$ on the facets of $\Delta $ such that, for each $j\ge 2$, there is a unique containment-wise minimal face $G_j$ in the set $\overline{F}_j\setminus (\cup_{i<j}\overline{F}_i)$ of faces of $F_j$.   This face $G_j$ is called the {\em restriction face} of $F_j$.   

One may verify the equivalence of the two definitions  by noting that the restriction face  $G_j$ for the shelling step $F_j$ consists of exactly those vertices $v$ in $F_j$ such that $F_j\setminus \{ v \} $
is a codimension one face of $F_j $ that is present in $\overline{F_j}\cap (\cup_{i<j} \overline{F_j})$.

 Using the more combinatorial  definition of shelling, it is well known 
 for pure complexes 
 that  $h_i$ is the number of shelling steps $F_j$ for which $G_j$ has exactly $i$ vertices in it.  See Remark~\ref{shelling-h-numbers} for one way of justifying this fact that we will utilize later.  
\end{rk}

 \begin{defn}
 A pure Boolean cell complex $\Delta $ is {\em partitionable} if its face poset may be decomposed into a disjoint union of Boolean lattices 
 whose maximal elements are the facets of $\Delta $.  A {\em partitioning} is any such decomposition. 
 \end{defn}

 \begin{rk}\label{rk:partitionable}
It is well known (see e.g.\ ~\cite[Chapter III, Section 2]{St2}) 
that any  pure 
shellable
Boolean cell complex $\Delta $ is partitionable.  Given a shelling $F_1,\dots ,F_m$,
the intervals $[G_j,F_j]$ in the face poset of $\Delta $ indexed by the facets $F_j\in \Delta $ and the corresponding restriction faces $G_j$ 
comprise a partitioning of $\Delta $. 
 \end{rk}

 \begin{defn}
A {\em homology facet} in a shelling is any facet $F_j$ such that
 $\overline{F}_j \cap (\cup_{i<j} \overline{F}_i)$ equals the  set of all faces in the 
 boundary of $\overline{F}_j$.  
 In other words, $F_j$  is any facet whose codimension one faces are all shared with earlier facets. 
\end{defn}

It is well known that any shellable complex is homotopy equivalent to a wedge of spheres in which the number of $i$-dimensional spheres equals the number of $i$-dimensional homology facets in any shelling of the complex. 
Thus, for any pure, shellable Boolean cell complex $\Delta $,
one has 
 $$\tilde{\chi }(\Delta) = (-1)^{d-1}\beta_{d-1}(\Delta ).$$  In fact, one may calculate $|\tilde{\chi }(\Delta )|$ in this case simply by counting the homology facets of the shelling, since these index the spheres in the wedge of spheres comprising $\Delta $.   This is a technique which has been used many times to calculate reduced Euler characteristic and is one we employ as well.  

Wachs showed in~\cite{Wachs-cohomology} that  any (pure or nonpure)  shellable complex $\Delta $ has a cohomology basis such that the homology facets of dimension $i$ naturally give rise to a basis  for the $i$th cohomology group of $\Delta $; specifically, she took the duals of the homology facets as the elements of a cohomology basis.  

It is also worth noting for pure shellable complexes that   $h_d(\Delta)=\beta_{d-1}(\Delta)$, since both quantities calculate  the rank of the top homology group of $\Delta$  by virtue of both  counting the homology facets in the shelling.

We next introduce a notion that will play a prominent role in our proofs of shellability as well as in our enumerative results  calculating $h$-numbers.  

\begin{defn}\label{def:blockers-facet}
Given a facet ordering $F_1,\dots ,F_m$ for a pure cell complex of Boolean type, define the {\em blockers} of a facet $F_j$ with respect to this facet ordering to be those vertices $v\in F_j$ such that the codimension one facet $F_j\setminus \{ v\} $ in $\overline{F}_j$ is also in the closure of some earlier facet $F_i$, implying $F_j\setminus \{ v\} $ is a face in $\overline{F_j} \cap (\cup_{i<j} \overline{F}_i$).
\end{defn}

\begin{rk}
Whenever $F_1,\dots ,F_m$ is a shelling, notice  that the restriction face $G_j$ of any  facet $F_j$ consists exactly of the blockers for $F_j$.
\end{rk}

The term ``blocker'' is new, though the idea of blockers appears implicitly in many prior works on shellability, for instance with the descents in an EL-labeling or CL-labeling giving rise to the blockers there.   
We will make use of the fact that blockers are well defined  even for facet orderings that are not shellings.  For instance, the  nonshellable simplicial complex on 5 vertices whose facets are $F_1 = \{ v_1,v_2,v_3\} $ and $F_2 = \{ v_3,v_4,v_5\} $ has no blockers in either facet in the ordering $F_1,F_2$ on the facets.   

Next is a well-known fact that  we rephrase using the language of blockers.  We will make extensive use of this reformulation of this fact.

\begin{lem}\label{h-via-blockers}
If a pure Boolean cell complex $\Delta $ is shellable, then $h_i(\Delta )$ is a nonnegative integer which equals the number of shelling steps  given by facets 
having exactly $i$ blockers.
\end{lem}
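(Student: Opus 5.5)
Fix a shelling $F_1,\dots,F_m$ of $\Delta$. For each $j$ let $G_j$ be the restriction face of $F_j$. The plan is to show that $\Delta$ is partitioned into the intervals $[G_j,F_j]$, and then to count faces by dimension in two ways.

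By the remark following Definition~\ref{def:blockers-facet}, $G_j$ is exactly the set of blockers of $F_j$. For $j=1$ there are no earlier facets, so $F_1$ has no blockers and $G_1=\emptyset$.

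First we establish the partitioning, as recalled in Remark~\ref{rk:partitionable}. By Remark~\ref{rk:combin-shelling}, the faces of $\overline{F}_j$ that are new at step $j$ are exactly the faces $H$ with $G_j\subseteq H\subseteq F_j$. Here we use that $\overline{F}_j$ is a Boolean lattice, so a face of $F_j$ is determined by its vertex subset. Every face lies in the closure of some first facet, so the face poset is the disjoint union of these intervals. Since each $F_j$ has $d$ vertices, the interval $[G_j,F_j]$ contains exactly $\binom{d-|G_j|}{i-|G_j|}$ faces with $i$ vertices.

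Next we count. Let $b_k$ be the number of $j$ with $|G_j|=k$, equivalently the number of facets with exactly $k$ blockers. Summing over the intervals gives
\[ f_{i-1}=\sum_{k=0}^{i}\binom{d-k}{i-k}b_k=\sum_{k=0}^{i}\binom{d-k}{d-i}b_k . \]
Compare this with Equation~\eqref{eq:f-from-h}. The map from $(h_0,\dots,h_d)$ to $(f_{-1},\dots,f_{d-1})$ is given by a lower triangular matrix with ones on the diagonal, so it is invertible. The inverse is the defining formula for $h_k$. It follows that $h_k=b_k$ for every $k$, and $b_k$ is a nonnegative integer.

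The only delicate point is the Boolean-cell setting. Several faces may share a vertex set, so we must check that the new faces at step $j$ really form the interval $[G_j,F_j]$ inside $\overline{F}_j$. This holds because the lower interval below $F_j$ is Boolean, so faces of $\overline{F}_j$ correspond bijectively to subsets of the vertices of $F_j$. With that in place, the simplicial argument goes through verbatim.
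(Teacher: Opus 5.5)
Your proposal is correct and follows essentially the paper's route: identify each restriction face $G_j$ with the blocker set of $F_j$, then use the partition of the face poset into the intervals $[G_j,F_j]$, which yields Equation~\eqref{eq:f-from-h} with $b_k$ in place of $h_k$, exactly as in Remark~\ref{shelling-h-numbers}. Your explicit unitriangular-inversion step simply makes precise the ``well known'' fact that the paper's one-line proof cites.
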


\begin{proof}
Using the combinatorial definition of shellability, 
it is well known that $h_i(\Delta )$ equals the number of shelling steps $F_j$ whose restriction faces  $G_j$ have exactly $i$ vertices in them.  But each restriction face $G_j$ consists of exactly the blockers in $F_j$, as explained above, so the result follows.
\end{proof}

\begin{rk}
For the complex $\I_n^{\sigma }$ of injective words avoiding a pattern $\sigma$ starting with its largest letter, we will prove that $\I_n^{\sigma }$  has a shelling in which its blockers are exactly the entries that are not ephemeral records (see Definition~\ref{def:records}).
\end{rk}

\subsection{Shellability of the complex of injective words}

 The complex of injective words $\I_n$ over the alphabet $[n]$ is  a pure, shellable  $(n-1)$-dimensional cell complex of Boolean type.  
 The original proof of shellability of this complex, by Bj\"orner and Wachs in \cite[Theorem 6.1, Part 1]{BW}, was based on a different  method to what we will use.  In \cite{BW} they gave a recursive coatom ordering for the face poset of $\I_n$,  a technique which they proved implies dual CL-shellability of the poset (which by another result of theirs this implies shellability of the complex itself).  Their coatom ordering gives rise to the same shelling order that we use later, namely it orders  the permutations in $\S_n$, written in one-line notation,  lexicographically.

\section{A technique to prove shellability via blocker enumeration}
\label{sec:new_technique}

This section gives a new technique to use enumeration to prove shellability.  We will apply this method   in Section~\ref{sec:separable} to the complex of separable injective words, enabling us to show that this complex is shellable.  First we describe a way of thinking about $h$-numbers that will be essential to our enumerative approach to shellability.

The face poset  of any pure shellable Boolean cell complex  may be  decomposed into nonoverlapping Boolean lattices whose maximal elements are all facets of the complex by assigning each face to the earliest facet having the face in its closure.  This is the partitioning discussed in Remark~\ref{rk:partitionable} that is given by the shelling.  
The $h$-number $h_i$ counts how many of these Boolean lattices have their minimal element at rank $i$.  Note that the empty face, which is $(-1)$-dimensional, is the only element of rank 0 in the face poset, ensuring for any partitionable complex 
that $h_0 = 1$. 
Using this partitioning  to calculate $f$-numbers from $h$-numbers  yields the useful observation described in the  next remark.

\begin{rk}\label{shelling-h-numbers}
When a Boolean cell complex $\Delta $ is pure of dimension $d-1$ and is  shellable, Equation~\eqref{eq:f-from-h} has the following combinatorial explanation.  
When the facet  $F_j$ in a shelling $F_1,\dots ,F_m$ has as
its blockers $\{ v_{i_1},\dots ,v_{i_s}\}$, then we already made the following observations  in Remark~\ref{rk:combin-shelling} that we will now utilize:
\begin{enumerate}
    \item 
    The 
set of faces in $\overline{F}_j \setminus (\cup_{i<j}\overline{F}_i)$ is a Boolean lattice  whose top element is  $F_j$ and whose   minimal element is  $\{ v_{i_1},\dots ,v_{i_s}\} $.
\item
The 
shelling step given by  $F_j$ has restriction face 
$\{ v_{i_1},\dots ,v_{i_s}\} $ and hence 
contributes 1 to $h_s$.
\end{enumerate}
If we assign each face $\gamma  \in \Delta $ 
to the earliest facet $F_j$ with $\gamma  \subseteq \overline{F}_j$, then 
$F_j$ contributes to $\Delta $ exactly the  set of faces in $$\overline{F}_j\setminus (\cup_{i<j}\overline{F}_i),$$
namely all of the subsets of the set of $d$ vertices in $F_j$ that include all $s$ blockers 
of  $F_j$.  Thus, the contribution of $1 $ to $h_s$ from $F_j$ translates to  $F_j$ contributing exactly ${d-s\choose k-s}$ faces having $k$ vertices in them for each $k\ge s$.  From this analysis,  
we deduce that
$$f_{k-1}=\sum_{i=0}^k{d-i\choose k-i}h_i$$ by summing up the contributions to the number of $(k-1)$-faces from the various facets.  This confirms Equation~\eqref{eq:f-from-h} in a combinatorial manner for pure shellable complexes.   
\end{rk}

\begin{defn}
Given a total order $F_1,\dots ,F_m$ on the facets of a pure $(d-1)$-dimensional Boolean cell complex $\Delta $, define the {\em blocker vector}, denoted $(B_0,B_1,\dots ,B_{d} )$, 
for this facet ordering to be the vector in which $B_i$ counts the number of facets having exactly $i$ blockers.  
\end{defn}

\begin{lem}\label{lem:hvector_implies_shelling}
Let $F_1,F_2,\dots ,F_m$ be a total order 
on the facets of a pure $(d-1)$-dimensional Boolean cell complex 
$\Delta $. 
Let $(h_0,h_1,\dots ,h_d)$ be the $h$-vector for $\Delta $, and let 
$(B_0,B_1,\dots ,B_d)$ denote the blocker vector given by the facet ordering $F_1,\dots ,F_m$.  
If $h_i=B_i$ for all $i$, then $F_1,\dots ,F_m$ 
is a shelling order for $\Delta $.
\end{lem}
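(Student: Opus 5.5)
We compare two ways of counting the faces of $\Delta$, by assigning each face to the first facet in the order that contains it. For each $j$, let $N_j=\overline{F}_j\setminus\bigcup_{i<j}\overline{F}_i$ be the set of faces of $F_j$ that are new at step $j$. The sets $N_1,\dots,N_m$ partition the face poset of $\Delta$, since every face lies below some facet. Let $s_j$ be the number of blockers of $F_j$, and let $S_j$ be its set of blockers. The key observation is that every face in $N_j$ must contain all of $S_j$. Indeed, if $\gamma\subseteq F_j$ omits a blocker $v$, then $\gamma\subseteq F_j\setminus\{v\}$, and that codimension one face lies in some earlier $\overline{F}_i$, so $\gamma\notin N_j$. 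Lower intervals in a Boolean cell complex are Boolean lattices, so faces of $F_j$ correspond to vertex subsets. Hence $N_j$ is contained in the set $U_j$ of faces of $F_j$ containing $S_j$, and $U_j$ has exactly $\binom{d-s_j}{k-s_j}$ faces with $k$ vertices.

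Summing over $j$ gives, for every $k$,
\[
f_{k-1}=\sum_j |N_j\cap\{\text{$k$-vertex faces}\}|\le \sum_j\binom{d-s_j}{k-s_j}=\sum_{i}\binom{d-i}{k-i}B_i .
\]
Now use the hypothesis $B_i=h_i$. By Equation~\eqref{eq:f-from-h}, which holds for any pure complex as the inverse of the definition of the $h$-numbers, the right side equals $f_{k-1}$. So equality holds for every $k$. Since $N_j\subseteq U_j$ for each $j$, and equality holds in each rank after summing over $j$, we get $N_j=U_j$ for every $j$.

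It then remains to read off shellability from the combinatorial definition in Remark~\ref{rk:combin-shelling}. We have shown that $N_j$ is exactly the set of faces of $F_j$ containing $S_j$. This set has the unique minimal element $G_j=S_j$, the face of $F_j$ with vertex set $S_j$. So each $j\ge2$ has a unique minimal new face, and the order is a shelling.

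The only subtle point is the inequality step: one must check that omitting a blocker really forces a face to be old, and that the face counts of $U_j$ are binomial coefficients. Both use only that closures of cells are Boolean, which is part of the definition of a Boolean cell complex. The passage from the summed equality to equality for each $j$ is valid because every term satisfies the same inclusion $N_j\subseteq U_j$.
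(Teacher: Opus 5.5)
Your proposal is correct and follows essentially the same argument as the paper. Every new face at step $j$ must contain all blockers of $F_j$, which gives $f_{k-1}\le\sum_i\binom{d-i}{k-i}B_i$; the hypothesis $B_i=h_i$ together with Equation~\eqref{eq:f-from-h} then forces equality, so the new faces at step $j$ are exactly the faces between the blocker face and $F_j$. Your explicit partition $N_1,\dots,N_m$ and the termwise inclusion $N_j\subseteq U_j$ make the paper's ``equality if and only if'' step slightly more transparent.
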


\begin{proof}
Consider  any $F_j$ and its blocker set $\mathcal{B} = \{ v_{i_1},\dots ,v_{i_s}\} $.
Then every face in 
$\overline{F}_j \setminus  (\cup_{i<j} \overline{F}_i)$ must include all of the vertices in $\mathcal{B}$, since every codimension one face of $F_j$  obtained by leaving out any one of these blockers is shared with an earlier facet, implying all faces of $F_j$ leaving out one or more of these blockers are also shared with earlier facets.  This yields
$f_{k-1}\le \sum_{i=0}^k{d-i\choose k-i}B_i$ for $0\le k\le d$, by reasoning similar to the analysis  in Remark~\ref{shelling-h-numbers}, namely by noting that each facet $F_j$ which contributes $1$ to $B_i$  contributes at most ${d-i \choose k-i}$ to $f_{k-1}$.  Moreover, equality is achieved if and only if each face in $\overline{F_j}$ containing all the blockers of $F_j$ is in $\overline{F}_j\setminus (\cup_{i<j}\overline{F}_i)$ for $j=1,2,\dots ,m$. 
But our $B_i=h_i$ assumption implies that the right hand side of this inequality equals $\sum_{i=0}^k{d-i\choose k-i}h_i$, which by Equation~(\ref{eq:f-from-h}) is exactly $f_{k-1}$.   Thus, our $B_i=h_i$ assumption for all $i$  implies this inequality is an equality for each $j$.  But this ensures for each $j$ that 
$\overline{F}_j\setminus (\cup_{i<j}\overline{F}_i)$ consists of exactly the faces contained in the face poset interval
$[G_j,F_j]$ where $G_j$ is the face consisting of the blockers of $F_j$.  In other words, for each $j$ we deduce that  $G_j$ is the unique minimal face in $\overline{F}_j\setminus (\cup_{i<j}\overline{F}_i)$, implying $F_1,\dots ,F_m$ is a shelling
for  $\Delta $ using the combinatorial definition of shellability.
\end{proof}

This can also be strengthened in the manner described next.  The point of this strengthening is  that it  could allow one to choose a facet ordering of a pure Boolean cell complex, identify  some  blockers with respect to this  ordering, determine the $h$ numbers of the complex,  and then  deduce from the next result  that there are no additional blockers and that the chosen facet ordering is a shelling order.

In what follows, we will use the rather informal term  \emph{known blocker} of a facet $F_j$ with respect to a facet ordering $F_1,\dots ,F_m$ of a pure simplicial complex to refer to  any  vertex $v$ of $F_j$ that is known to have the property that it is a blocker.
Note that not every blocker must be a known blocker.  The point is that one might identify any subset of the set of  blockers and call them the  known blockers, then one might attempt to use the next result to prove that  these are all the blockers.

\begin{lem}\label{lem:more-general-version}
Let $F_1,\dots ,F_m$ be a total order on the facets of a pure $(d-1)$-dimensional Boolean cell complex $\Delta $. 
Let $(h_0,h_1,\dots ,h_d)$ be the $h$-vector of $\Delta $, and let $(K_0,\dots ,K_d)$ denote  the vector in which $K_i$ counts  the number of facets  with exactly $i$ known blockers.  
If $h_i=K_i$ for all $i$, then $F_1,\dots ,F_m$ is a shelling order in which the  known blockers are all of the blockers.
\end{lem}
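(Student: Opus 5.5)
The plan is to rerun the counting argument from the proof of Lemma~\ref{lem:hvector_implies_shelling}, with known blockers in place of all blockers. The property of blockers used there holds just as well for any subset of them. Fix $j$ and let $\mathcal{K}_j$ be the set of known blockers of $F_j$, with $|\mathcal{K}_j| = s$. Each $v\in \mathcal{K}_j$ is a genuine blocker, so $F_j\setminus\{v\}$ lies in $\overline{F}_i$ for some $i<j$. Hence every face of $F_j$ that omits some vertex of $\mathcal{K}_j$ lies in $\cup_{i<j}\overline{F}_i$, since closures are subcomplexes. Consequently every face of $\overline{F}_j\setminus(\cup_{i<j}\overline{F}_i)$ contains $\mathcal{K}_j$.

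The counting step comes next. Every face of $\Delta$ lies in $\overline{F}_j\setminus(\cup_{i<j}\overline{F}_i)$ for exactly one $j$, namely the earliest facet containing it. So $f_{k-1}$ is the sum over $j$ of the number of $k$-vertex faces in these sets. Since $\overline{F}_j$ is a Boolean lattice on its $d$ vertices, the faces with $k$ vertices containing $\mathcal{K}_j$ number $\binom{d-s}{k-s}$. This gives
\[ f_{k-1}\le \sum_{i=0}^k \binom{d-i}{k-i} K_i \quad (0\le k\le d). \]
By hypothesis $K_i=h_i$, so by Equation~\eqref{eq:f-from-h} the right side equals $f_{k-1}$. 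Equality therefore holds for every $k$. Since the inequality is a sum of termwise inequalities, each term is tight: for every $j$, every face of $F_j$ containing $\mathcal{K}_j$ lies in $\overline{F}_j\setminus(\cup_{i<j}\overline{F}_i)$. Thus this set is exactly the interval $[\mathcal{K}_j, F_j]$, which has unique minimal element $\mathcal{K}_j$. By Remark~\ref{rk:combin-shelling}, the order $F_1,\dots,F_m$ is a shelling with restriction face $G_j=\mathcal{K}_j$.

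Finally I need to show that the known blockers are all the blockers. Suppose $w\in F_j$ is a blocker with $w\notin\mathcal{K}_j$. Then $F_j\setminus\{w\}$ contains $\mathcal{K}_j$ and is shared with an earlier facet. That contradicts the previous step, which puts every face of $F_j$ containing $\mathcal{K}_j$ outside $\cup_{i<j}\overline{F}_i$. Equivalently, the restriction face of a shelling consists exactly of the blockers, as noted after Definition~\ref{def:blockers-facet}. The only delicate point is turning global equality into termwise tightness for each $j$. This works because each term of the bound is individually an upper bound, so equality of the sums forces equality of every term.
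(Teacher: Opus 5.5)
Your proof is correct and takes essentially the same approach as the paper. Every face that first appears at $F_j$ must contain the known blockers. The hypothesis $K_i=h_i$ together with Equation~\eqref{eq:f-from-h} forces the resulting bound $f_{k-1}\le\sum_i\binom{d-i}{k-i}K_i$ to be an equality. Termwise tightness then gives both the shelling, with restriction face equal to the known blockers, and the absence of any further blockers. You only reverse the order of the last two conclusions and spell out the termwise-equality step that the paper delegates to the proof of Lemma~\ref{lem:hvector_implies_shelling}.
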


\begin{proof}
If $F_j$ has known blockers $\{ v_{i_1},\dots ,v_{i_r}\} $, then  
the only faces in $\overline{F}_j$ that could possibly be contained in $\overline{F}_j \setminus (\cup_{i<j}\overline{F}_i)$ are faces of $F_j$ containing all these known blockers.  But we need all the  faces in $\overline{F}_j$ which contain all its known blockers  to be in 
$\overline{F}_j\setminus  (\cup_{i<j}\overline{F}_i)$ in order for $\Delta $ to have enough faces for its face numbers to  yield the $h$-vector $(h_0,\dots ,h_d)$, by the same reasoning as in the proof of Lemma~\ref{lem:hvector_implies_shelling}. 

This implies that there are no other blockers besides the known blockers, since leaving out any one vertex  $v$ from $F_j$ that is not in the set 
$\{ v_{i_1},\dots ,v_{i_r}\} $ of known blockers of $F_j$  must yield a face $F_j\setminus \{ v\} $ that contains all these known blockers and hence must not be shared with any earlier facet.   This shows for  $v\not\in \{ v_{i_1},\dots ,v_{i_r}\} $ that $v$  cannot be a blocker.    

This also implies that $F_1,\dots ,F_m$ is a shelling order since we cannot leave out any set of two or more vertices from $F_j$ and get a face $G$ shared with an earlier facet $F_i$ with $i<j$ unless $G$ is missing one or more of our known blockers, since otherwise $F$ would  include all our known blockers and not be in $\overline{F}_j\setminus (\cup_{i<j}\overline{F}_i)$, yielding a contradiction.  This shows for each $j>1$ that the insertion of $F_j$ must be a shelling step with exactly the known blockers as its blockers.
\end{proof}

\section{An $h$-number formula for the complex of injective words and shellability for many of its pattern-avoiding subcomplexes}
\label{sec:In}

In this section, we characterize the blockers in
the shelling 
for $\I_n$ from \cite{BW}, in which permutations (written in one line notation) are ordered lexicographically.
Then we show how to use this characterization to prove shellability for any subcomplex $\I_n^{\sigma }$ 
 whose blockers are sufficiently similar to those in the full complex of injective words $\I_n$.  We will use this 
 result later in the paper to prove shellability of $\I_n^{\sigma } $ for large classes of patterns $\sigma $.     
Using our  description of the  blockers in $\I_n$  in conjunction with a bijection of Foata, we also  deduce  a seemingly new 
combinatorial formula for each $h$-number of  $\I_n$.

We denote by $\lex$ the lexicographic order on permutations.
In Definition~\ref{def:blockers-facet} we defined blockers in the facets of a shelling of a cell complex of Boolean type. 
For the complex of injective words $\I_n$, given a fixed ordering of its facets, the definition of a blocker can be restated as follows. A blocker for this complex will be called an {\em unrestricted blocker}, to emphasize that the words in the complex have no pattern-avoidance restrictions.

\begin{defn}\label{def:unrestricted_blocker} 
Fix a total order $\prec$ (which we will assume to be $\lex$ unless otherwise stated) 
on $\S_n$. Given $\pi\in\S_n$, an entry $\pi_j$ is an {\em unrestricted blocker} (with respect to $\prec$) if $\pi_1\dots \widehat{\pi_j}\dots \pi_n$ is a subsequence of some permutation $\pi'\in\S_n$ such that $\pi'\prec\pi$; equivalently, if 
$\pi_j$ can be moved within $\pi_1\pi_2\dots\pi_n$ (while keeping the other entries in the same order) to create a smaller permutation.
\end{defn}

Next we characterize unrestricted blockers with respect to the lexicographic order.  

\begin{lem}\label{lem:unrestricted_blockers}
   Let $\pi\in\S_n$. The entry $\pi_j$ is an unrestricted blocker with respect to $\lex$ if and only if either 
    \begin{itemize}
    \item[(a)] $\pi_j$ has a larger entry somewhere to its left (i.e., $\pi_i>\pi_j$ for some $i<j$), or
    \item[(b)] $\pi_i$ has a smaller entry immediately to its right (i.e., $j<n$ and $\pi_j>\pi_{j+1}$).
    \end{itemize}
\end{lem}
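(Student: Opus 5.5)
We prove both directions directly from Definition~\ref{def:unrestricted_blocker}. Write $\pi^{(j)}$ for the permutation obtained by removing $\pi_j$ from $\pi$ and reinserting it at some other position, with the remaining entries kept in their relative order. The permutations $\pi'\in\S_n$ containing $\pi_1\dots\widehat{\pi_j}\dots\pi_n$ as a subsequence are exactly $\pi$ and these $\pi^{(j)}$. So $\pi_j$ is a blocker if and only if some reinsertion of $\pi_j$ gives a lexicographically smaller permutation. We compare $\pi$ with each $\pi^{(j)}$ at the first position where they differ.

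For sufficiency, first suppose (a) holds. Let $i<j$ be the smallest index with $\pi_i>\pi_j$. Move $\pi_j$ to position $i$, directly before $\pi_i$. The new word agrees with $\pi$ in positions $1,\dots,i-1$ and has $\pi_j<\pi_i$ in position $i$, so it is lex smaller. (Any $i<j$ with $\pi_i>\pi_j$ works for this argument.)

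Next suppose (b) holds, so $\pi_j>\pi_{j+1}$. Move $\pi_j$ one step to the right, which swaps it with $\pi_{j+1}$. The new word agrees with $\pi$ before position $j$ and has $\pi_{j+1}<\pi_j$ in position $j$, so it is smaller.

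For necessity, suppose neither (a) nor (b) holds. Then every entry to the left of $\pi_j$ is smaller than $\pi_j$, and either $j=n$ or $\pi_{j+1}>\pi_j$. We show that every reinsertion $\pi'$ of $\pi_j$ is lex larger than $\pi$.

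Consider first moving $\pi_j$ leftward to a position $i<j$. Then $\pi'$ agrees with $\pi$ before position $i$ and has $\pi_j$ at position $i$. Since (a) fails, $\pi_j>\pi_i$, so $\pi'$ is larger.

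Consider next moving $\pi_j$ rightward to a position $k>j$. This requires $j<n$. Now $\pi'$ agrees with $\pi$ in positions $1,\dots,j-1$ and has $\pi_{j+1}$ at position $j$. Since (b) fails, $\pi_{j+1}>\pi_j$, so $\pi'$ is larger.

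Hence no smaller permutation contains the subword, and $\pi_j$ is not a blocker.

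The only point needing care is the necessity direction, specifically identifying the first position where $\pi'$ and $\pi$ differ. For a leftward move it is the insertion position $i$. For a rightward move it is position $j$, where $\pi'$ now has $\pi_{j+1}$, and this holds regardless of how far right $\pi_j$ is moved. Once this is observed, the comparison is immediate, so I do not expect a genuine obstacle.
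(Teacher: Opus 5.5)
Your proof is correct and follows essentially the same route as the paper: both compare $\pi$ with each reinsertion of $\pi_j$ at the first position where they differ. For a leftward move that position is the insertion position, and for a rightward move it is position $j$, which now holds $\pi_{j+1}$. The paper compresses this into a list of the positions that yield a smaller permutation, whereas you write out sufficiency and necessity separately, which makes the ``only if'' direction more explicit.
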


\begin{proof}
By Definition~\ref{def:unrestricted_blocker}, $\pi_j$ is an unrestricted blocker with respect to $\lex$ if it can be moved within $\pi$ to create a lexicographically smaller permutation.

The positions where $\pi_j$ can be moved to in order to create a lexicographically smaller permutation are as follows:
\begin{itemize}
    \item it can be placed immediately before any larger entry to the left of it, and
    \item if $\pi_j>\pi_{j+1}$, it can be moved anywhere to the right of its current position.
\end{itemize}
The first kind of position exists if and only if $\pi_j$ has a larger entry somewhere to its left (condition (a)). The second kind of position exists if and only if $j<n$ and $\pi_j>\pi_{j+1}$ (condition (b)).
\end{proof}

With the terminology from Section~\ref{sec:permutations}, Lemma~\ref{lem:unrestricted_blockers} states that the unrestricted blockers of $\pi\in\S_n$ with respect to $\lex$ are precisely those entries that are not ephemeral records. 

\begin{rk}
Interestingly, the entries $\pi_j$ in a permutation $\pi = \pi_1\cdots \pi_n$ which are not ephemeral records in $\pi $ also play an important role in the proof in \cite{BW} that the face poset for $\I_n$ admits a recursive coatom ordering, namely their result which implies shellability of $\I_n$. 
\end{rk}

Next we give two seemingly new combinatorial interpretations of the $h$-numbers of the complex of injective words. 
For $\pi\in\S_n$, let $\fp(\pi)$ denote the number of fixed points of $\pi$, and recall that $\er(\pi)$ denotes the number of ephemeral records of $\pi$. Recall also that $d_n$ denotes the number of derangements in $\S_n$.

\begin{prop}\label{prop:h-numbers}
    For $0\le i\le n$, we have
    \[
        h_i(\I_n)=|\{\pi\in\S_n:\er(\pi)=n-i\}|
        =|\{\pi\in\S_n:\fp(\pi)=n-i\}|=\binom{n}{i}d_i.
    \]
\end{prop}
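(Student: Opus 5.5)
The proof splits into three equalities, taken in turn.

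\emph{First equality.} By the result of Bj\"orner and Wachs~\cite{BW}, the lexicographic order on $\S_n$ is a shelling of $\I_n$. Lemma~\ref{h-via-blockers} then says that $h_i(\I_n)$ is the number of $\pi\in\S_n$ with exactly $i$ blockers. By Lemma~\ref{lem:unrestricted_blockers}, the blockers of $\pi$ are exactly the entries that are not ephemeral records, so $\pi$ has $n-\er(\pi)$ blockers. Hence $h_i(\I_n)=|\{\pi:\er(\pi)=n-i\}|$.

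\emph{Third equality.} To have exactly $n-i$ fixed points, choose the $n-i$ fixed positions in $\binom{n}{n-i}=\binom{n}{i}$ ways, and let the remaining $i$ letters form a derangement in $d_i$ ways. This gives $|\{\pi:\fp(\pi)=n-i\}|=\binom{n}{i}d_i$.

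\emph{Middle equality.} This is the main step. I would use Foata's fundamental transformation. Write $\pi$ in cycle notation, put each cycle with its largest element first, and order the cycles by increasing first element. Erasing the parentheses gives a word $\hat\pi$. The cycle-leading elements of $\pi$ are then exactly the records of $\hat\pi$, and a cycle of length one corresponds to a record immediately followed by a larger entry (the next record) or sitting at the end of the word. Thus fixed points of $\pi$ correspond exactly to records of $\hat\pi$ that are not descents, i.e. ephemeral records. Since the map $\pi\mapsto\hat\pi$ is a bijection of $\S_n$ (one recovers the cycles by cutting $\hat\pi$ before each record), we get $\fp(\pi)=\er(\hat\pi)$, so the two sets are equinumerous.

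The only delicate point is checking the record/cycle correspondence and the endpoint case $j=n$, where a last record corresponds to a final singleton cycle containing $n$.
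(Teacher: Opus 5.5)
Your proposal is correct and follows the paper's proof essentially step for step. The first equality comes from the $\lex$ shelling of $\I_n$ together with Lemmas~\ref{h-via-blockers} and~\ref{lem:unrestricted_blockers}; the middle equality comes from Foata's first fundamental transformation sending fixed points to ephemeral records; and the last equality is the direct count of choosing the fixed points and deranging the rest. Your check that a singleton cycle corresponds exactly to a record that is either followed by the next (larger) record or ends the word is the detail the paper states without elaboration.
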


\begin{proof}
 The first equality follows from Lemmas~\ref{h-via-blockers} and~\ref{lem:unrestricted_blockers}, using the fact that the complex of injective words is shellable with respect to $\lex$.

    We will prove the second equality bijectively using Foata's first fundamental transformation \cite[Section 10.2]{Lothaire}.
    For any permutation $\pi\in\S_n$, its image $\hat\pi$ under this transformation is defined as follows. First, write $\pi$ in cycle form where each cycle starts with its largest element, and the cycles are listed from left to right by increasing initial elements. Then, remove the parentheses to obtain the one-line notation of $\hat\pi$.

    Note that the initial elements of the cycles of $\pi$ become the records of $\hat\pi$. Thus, fixed points (i.e., cycles of length one) of $\pi$ become records of $\hat\pi$ that are not followed a smaller entry, i.e., ephemeral records. This proves the second inequality. 

    The third equality is straightforward.
\end{proof}

When $i=n$, Proposition~\ref{prop:h-numbers} states that  $h_n(\I_n)=d_n$, a fact that was already observed in \cite{WebbR}.
In this case, Foata's transformation gives a bijection between the set of derangements in $\S_n$ and the set $\L_n$ of permutations with no ephemeral records.

Theorem~\ref{thm:shelling-via-unrestricted-blockers} below  recovers the shelling of \cite{BW}, but using a different proof which allows the result to be generalized to prove shellability for numerous subcomplexes of $\I_n$ as well.   
The definition of blockers for the complex $\I_n^\sigma$ can be restated as follows.

\begin{defn}\label{def:blocker} 
Let $\sigma\in\S_k$, and fix a total order $\prec$ 
on $\S_n(\sigma)$. Given $\pi\in\S_n(\sigma)$, an entry $\pi_j$ is a {\em $\sigma$-blocker} 
(with respect to $\prec$) if $\pi_1\dots \widehat{\pi_j}\dots \pi_n$ is a subsequence of some permutation $\pi'\in\S_n(\sigma)$ such that $\pi'\prec\pi$; equivalently, if 
$\pi_j$ can be moved within $\pi_1\dots\pi_n$ (while keeping the other entries in the same order) to create a smaller $\sigma$-avoiding permutation.
\end{defn}

The following simple observation will be useful later.

\begin{lem}\label{lem:no_gained_blockers}
Let $\sigma\in\S_k$, let $\prec$ be a total ordering on $\S_n(\sigma)$, and let $\pi\in\S_n(\sigma)$. If an entry $\pi_j$ is a $\sigma$-blocker (with respect to $\prec$), then it is also an unrestricted blocker (with respect to $\prec$).
\end{lem}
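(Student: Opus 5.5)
The plan is to unwind the two definitions and observe that the condition defining a $\sigma$-blocker is a special case of the condition defining an unrestricted blocker. One preliminary point first. Definition~\ref{def:unrestricted_blocker} uses a total order on all of $\S_n$, while here $\prec$ is given only on $\S_n(\sigma)$. So I will read the statement with $\prec$ being (the restriction to $\S_n(\sigma)$ of) a total order on $\S_n$, as with $\lex$, the case of interest. Then both notions of blocker refer to the same comparison $\pi'\prec\pi$.

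The argument then has two steps.

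First, suppose $\pi_j$ is a $\sigma$-blocker. By Definition~\ref{def:blocker}, there is a permutation $\pi'\in\S_n(\sigma)$ with $\pi'\prec\pi$ such that $\pi_1\dots\widehat{\pi_j}\dots\pi_n$ is a subsequence of $\pi'$. Equivalently, $\pi'$ is obtained from $\pi$ by moving the entry $\pi_j$ to another position while keeping the other entries in order.

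Second, since $\S_n(\sigma)\subseteq\S_n$, this same $\pi'$ lies in $\S_n$ and satisfies $\pi'\prec\pi$. It therefore witnesses the condition in Definition~\ref{def:unrestricted_blocker}, so $\pi_j$ is an unrestricted blocker with respect to $\prec$.

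Topologically, this is just the statement that $\I_n^\sigma$ is a subcomplex of $\I_n$ (Proposition~\ref{subcomplex-property}). A codimension-one face of the facet $\pi$ that lies in the closure of an earlier facet of $\I_n^\sigma$ also lies in the closure of that same earlier facet viewed inside $\I_n$.

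I do not expect any genuine obstacle. The only point needing care is the one noted above: the order used for unrestricted blockers must be defined on all of $\S_n$ and must agree with $\prec$ on $\S_n(\sigma)$, so that the witness $\pi'$ is compared to $\pi$ in the same way under both definitions.
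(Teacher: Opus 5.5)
Your proposal is correct and is essentially the paper's proof: the witness $\pi'\in\S_n(\sigma)\subseteq\S_n$ with $\pi'\prec\pi$ for the $\sigma$-blocker condition is directly a witness for the unrestricted-blocker condition. Your caveat is also sound. The paper leaves implicit that $\prec$ must be (the restriction of) a total order on all of $\S_n$, as it is for $\lex$, so that both definitions use the same comparison.
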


\begin{proof}
    By Definition~\ref{def:blocker}, $\pi_j$ is a $\sigma$-blocker if $\pi_1\dots \widehat{\pi_j}\dots \pi_n$ is a subsequence of some permutation $\pi'\in\S_n(\sigma)$ such that $\pi'\prec\pi$. But then, since $\pi'\in\S_n$ and $\pi'\prec\pi$, the entry $\pi_j$ is an unrestricted blocker as well.
\end{proof}

\begin{thm}\label{thm:shelling-via-unrestricted-blockers}
Consider any pattern $\sigma $ with the following two properties:
\begin{enumerate}
\item  $\I_n^{\sigma }$ is pure of dimension $n-1$.
\item The $\sigma $-blockers with respect to $\lex$ in any $\pi\in\S_n(\sigma)$ are exactly the unrestricted blockers of $\pi$ with respect to $\lex$.
\end{enumerate}
Then  the lexicographic ordering on the $\sigma $-avoiding permutations in $\S_n$ written in one-line notation is a shelling order for $\I_n^{\sigma }$.

By the same reasoning, the lexicographic ordering on all permutations in $\S_n$ written in one-line notation is a shelling order for $\I_n$.
\end{thm}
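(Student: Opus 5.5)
The plan is to verify the combinatorial definition of shelling from Remark~\ref{rk:combin-shelling} directly. Set $\Delta=\I_n^\sigma$. By hypothesis (1), $\Delta$ is pure of dimension $n-1$ and its facets are exactly the permutations in $\S_n(\sigma)$. List them as $F_1,\dots,F_m$ in lexicographic order. Fix $\pi=F_j$ and let $G$ be the set of its $\sigma$-blockers. By hypothesis (2) and Lemma~\ref{lem:unrestricted_blockers}, $G$ is the set of entries of $\pi$ that are not ephemeral records.

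It suffices to show that a face $\tau$ of $\pi$, i.e.\ a subword of $\pi$, lies in some earlier facet $\pi'\lex\pi$ of $\Delta$ if and only if $\tau$ omits some element of $G$. Then $\overline{F}_j\setminus\bigcup_{i<j}\overline{F}_i$ is exactly the interval $[G,\pi]$, which is the required unique minimal new face.

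The ``if'' direction is immediate. If $\tau$ omits $g\in G$, then $\tau$ is a subword of $\pi\setminus g$. Since $g$ is a $\sigma$-blocker, $\pi\setminus g$ lies in the closure of some $\pi'\lex\pi$ with $\pi'\in\S_n(\sigma)$.

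The main obstacle is the ``only if'' direction. Suppose $\tau$ contains all of $G$, and suppose $\tau$ is a subword of some $\pi'\in\S_n$ (not necessarily $\sigma$-avoiding). We must show $\pi\lex\pi'$ or $\pi'=\pi$. Everything missing from $\tau$ is an ephemeral record of $\pi$. I would argue directly, and this also proves the statement for $\I_n$ itself.

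Let $p$ be the first position where $\pi'$ and $\pi$ differ, and suppose $\pi'_p<\pi_p$. Let $a=\pi'_p$.
\begin{itemize}
\item Suppose first that $a$ appears later in $\pi$. If $a$ is not a record of $\pi$, then some larger entry precedes it in $\pi$, so $a\in G\subseteq\tau$. If $a$ is a record, then its ephemeral status does not matter, because $\pi_p>a$ lies before $a$ in $\pi$, contradicting that $a$ is a record. So in all cases $a\in\tau$, and by the same record argument $\pi_p\neq a$ is a record of $\pi$ or else lies in $G$.
\item Now compare relative orders in $\tau$. Among entries of $\tau$, $a$ comes after $\pi_p$ in $\pi$ but before $\pi_p$ in $\pi'$, unless $\pi_p\notin\tau$. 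If both lie in $\tau$, this contradicts $\tau$ being a common subword of $\pi$ and $\pi'$. So $\pi_p\notin\tau$, and hence $\pi_p$ is an ephemeral record with $\pi_p<\pi_{p+1}$ (note $p<n$).
\item Iterate. The entries $\pi_p,\pi_{p+1},\dots$ up to the position of $a$ are not all ephemeral records. The first descent among them is a record descent or a non-record, hence lies in $G\subseteq\tau$. That entry $b$ exceeds $a$: either it is a record larger than $\pi_p>a$, or it is a non-record larger than something before it and at least $\pi_p$. Moreover $b$ precedes $a$ in $\pi$ but follows $a$ in $\pi'$, a contradiction.
\end{itemize}
Filling in this last chain carefully, in particular verifying that some such element $b\in G$ with $b>a$ occurs strictly between $\pi_p$ and $a$, is where I expect the real work to be. If that is cumbersome, I would instead count faces. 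The faces of $[G,\pi]$ over all $\pi$ are pairwise distinct, since the easy direction shows each face is assigned to the earliest facet containing it. So it suffices to show that their total equals $f_{k-1}$; alternatively, by Lemma~\ref{lem:hvector_implies_shelling} it suffices to compare blocker vectors with $h$-numbers. The direct argument seems cleaner, however, and applies verbatim to $\I_n$.
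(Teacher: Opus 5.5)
Your proof is correct and follows essentially the same route as the paper's. Both arguments fix the first position $p$ where an earlier $\pi'$ differs from $\pi$, observe that $a=\pi'_p$ is a non-record and hence a blocker that must be kept, and note that every entry of $\pi$ from position $p$ up to just before $a$ precedes $a$ in $\pi$ but follows it in $\pi'$, so none of those entries can be kept. The paper then concludes directly that these omitted entries are increasing ephemeral records ending below $a<\pi_p$; your ``first descent'' step is this same observation in contrapositive form, so the step you flagged as the real work is already complete (the inequality $b>a$ is not even needed, and your counting fallback would be unavailable anyway, since the paper obtains the ephemeral-record formula for the $h$-numbers only as a consequence of this theorem).
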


\begin{proof}
Our first requirement on $\sigma $ guarantees that the facets in $\I_n^{\sigma }$ are exactly the $\sigma $-avoiding permutations in $\S_n$.
By Lemma~\ref{lem:unrestricted_blockers}, the unrestricted blockers in any $\sigma$-avoiding permutation $\pi  = \pi_1\dots \pi_n$ with respect to $\lex$ are those letters $\pi_i$ which either have a larger letter to their left or have a smaller letter to their immediate right; in other words, entries that are not ephemeral records.   Our second requirement on $\sigma $ ensures that the unrestricted blockers are exactly the $\sigma $-blockers.  Henceforth in this proof, we simply call these the blockers of the $\sigma $-avoiding permutations.   

Now let $\pi\in\S_n(\sigma)$, and suppose that the face $\pi_1 \dots \hat{\pi}_{i_1}\dots \hat{\pi}_{i_k}\dots \pi_n$ is shared with a lexicographically earlier $\sigma$-avoiding permutation $\pi'$.    Suppose that none of the letters in the set
 $S = \{ \pi_{i_1},\dots ,\pi_{i_k }\} $ of omitted letters are blockers in $\pi$.    This implies that each of these letters is an ephemeral record, so in particular these letters comprise an increasing subsequence within $\pi$.  The leftmost letter among $\pi_{i_1},\dots ,\pi_{i_k} $ that shifts to a new position in $\pi'$ must shift rightward past a letter smaller than it  in order for $\pi'$ to be lexicographically smaller than $\pi$.  
Let $\pi_i $ be the leftmost letter where $\pi$ differs from $\pi'$, implying  $\pi_i ' < \pi_i $.   
Since $\pi_1\dots \pi_{i-1} = \pi_1 '\dots \pi_{i-1}'$, we must have some $j>i$ where $\pi_j = \pi_i'$ which means the letters
$\pi_i,\pi_{i+1},\dots ,\pi_{j-1}$ are all shifted rightward past  $\pi_j$ to obtain $\pi'$.   In particular $\{ \pi_i,\dots ,\pi_{j-1} \}  \subseteq S $
since $\pi_j \not \in S$ by virtue of $\pi_j$ not being a left-to-right maximum.
But $\pi_j < \pi_i \le \pi_{j-1}$  where $\pi_{j-1}\in S$, which contradicts 
$\pi_{j-1}$ not being a blocker in $\pi$.   The upshot of all this is that for our facet ordering $F_1,\dots ,F_m$ (which is the list of elements of $\S_n(\sigma)$ ordered lexicographically), each of the maximal faces in $\overline{F}_r \cap (\cup_{i<r}\overline{F}_i)$ for each $r$ is obtained by deleting a single blocker from $F_r$. Thus, each such maximal face is a codimension one face of $F_r$, completing the proof  that the lexicographic order on $\sigma $-avoiding permutations is  a shelling order.   

Also note that $\I_n$ itself vacuously meets these two  requirements we impose on $\sigma $, allowing the proof above  also to be used to deduce shellability of $\I_n $ itself with respect to the lexicographic order on permutations in $\S_n$  written in one-line notation.
\end{proof}

In Theorem~\ref{thm:shellability_start_with_k}, we will apply Theorem~\ref{thm:shelling-via-unrestricted-blockers} to a large class of permutation patterns.

\section{Generating functions for the $h$-numbers of $\I_n^\sigma$}
\label{sec:GFs}

 Throughout this section, we restrict attention to  complexes $\I_n^{\sigma }$ which 
are pure.  Our proofs here do not use this hypothesis, but that is the level of generality in which $h$-vectors are typically defined. For nonpure complexes, one could instead consider  the $h$-triangles introduced in~\cite{BW-nonpure}.

Let $f_i(\I_n^\sigma)$ and $h_i(\I_n^\sigma)$ denote the $f$-numbers and $h$-numbers, respectively, of the complex of $\sigma$-avoiding injective words $\I_n^\sigma$, and recall the definition of the reverse $h$-polynomial $\bh(\I_n^\sigma,t)$ from Equation~\eqref{eq:h-reverse}.
The following result expresses the generating function for these polynomials in terms of the generating function $P_\sigma(x)$ for $\sigma $-avoiding permutations  defined in Equation~\eqref{eq:Psigma}.

\begin{prop}\label{prop:GFh} For any pattern $\sigma$, we have
\[
    \sum_{n=0}^\infty \bh(\I_n^\sigma,t)\,x^n
=\frac{1}{1-(t-1)x}\,P_\sigma\left(\frac{x}{1-(t-1)x}\right),
\]
or equivalently,
\[
    \sum_{n=0}^\infty h(\I_n^\sigma,t)\,x^n
=\frac{1}{1-(1-t)x}\,P_\sigma\left(\frac{tx}{1-(1-t)x}\right);
\]
In particular, the generating function for the top $h$-number of the complexes $\I_n^\sigma$ is
\begin{equation}\label{eq:Euler_characteristic}
    \sum_{n=0}^\infty h_n(\I_n^\sigma)\,x^n
=\frac{1}{1+x}\,P_\sigma\left(\frac{x}{1+x}\right).
\end{equation}
\end{prop}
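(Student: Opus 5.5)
The plan is to compute $\bh(\I_n^\sigma,t)$ directly from the $f$-numbers via Equation~\eqref{eq:f-h}, taking $d=n$. As noted in the discussion after Proposition~\ref{subcomplex-property}, the $(i-1)$-dimensional faces of $\I_n^\sigma$ are the $\sigma$-avoiding injective words of length $i$ over $[n]$. Such a word is determined by a choice of $i$-subset of $[n]$ together with a $\sigma$-avoiding permutation of that subset. Since avoidance depends only on relative order, this gives $f_{i-1}(\I_n^\sigma)=\binom{n}{i}|\S_i(\sigma)|$. We use Equation~\eqref{eq:f-h} with $d=n$ as the definition of the $h$-numbers; it is only the defining formula for $h_k$ rewritten, so purity is not actually used, consistent with the remark at the start of the section. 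This gives
\[
\bh(\I_n^\sigma,t)=\sum_{i=0}^n \binom{n}{i}|\S_i(\sigma)|\,(t-1)^{n-i}.
\]

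Next we form the generating function over $n$ and swap the order of summation, writing $m=n-i$ and $s=t-1$:
\[
\sum_{n\ge0}\bh(\I_n^\sigma,t)x^n=\sum_{i\ge0}|\S_i(\sigma)|\,x^i\sum_{m\ge0}\binom{i+m}{i}(sx)^m .
\]
The inner sum equals $(1-sx)^{-(i+1)}$ by the standard negative binomial series. The whole expression therefore becomes
\[
\frac{1}{1-sx}\sum_i|\S_i(\sigma)|\left(\frac{x}{1-sx}\right)^i=\frac{1}{1-(t-1)x}\,P_\sigma\!\left(\frac{x}{1-(t-1)x}\right),
\]
which is the first identity. All of this is valid as formal power series in $x$ with coefficients polynomial in $t$.

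For the second form, we use $h(\I_n^\sigma,t)=t^n\bh(\I_n^\sigma,1/t)$. In the first identity we replace $t$ by $1/t$ and $x$ by $tx$. Then $1-(1/t-1)tx=1-(1-t)x$, and the argument of $P_\sigma$ becomes $tx/(1-(1-t)x)$. For the top $h$-number we note that $h_n=\bh(\I_n^\sigma,0)$, so setting $t=0$ in the first identity yields Equation~\eqref{eq:Euler_characteristic}.

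None of these steps is a real obstacle. The only points needing care are the face count, where it matters that pattern avoidance depends only on the relative order of the letters, and the correct indexing of the binomial coefficient in Equation~\eqref{eq:f-h}.
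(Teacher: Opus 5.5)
Your proposal is correct and follows essentially the same route as the paper. The paper also counts $f_{i-1}(\I_n^\sigma)=\binom{n}{i}|\S_i(\sigma)|$, applies Equation~\eqref{eq:f-h} with $d=n$, swaps the order of summation using the negative binomial series, then substitutes $(t,x)\mapsto(1/t,tx)$ for the $h$-polynomial and sets $t=0$ for the top $h$-number. Your statement of the series, $\sum_{m\ge0}\binom{i+m}{i}y^m=(1-y)^{-(i+1)}$, is actually more accurate than the identity displayed in the paper's proof, which is missing parentheses in the denominator.
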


\begin{proof}
We have $$f_{i-1}(\I_n^\sigma)=\binom{n}{i}|\S_i(\sigma)|,$$
since faces of dimension $i-1$ are obtained by choosing $i$ distinct letters from $[n]$, and then listing them in any order that avoids~$\sigma$.
Thus, by Equation~\eqref{eq:f-h}, we can write 
\begin{equation}\label{eq:h-Si}
\bh(\I_n^\sigma,t)=\sum_{i=0}^n  \binom{n}{i}|\S_i(\sigma)| (t-1)^{n-i}.
\end{equation}

We deduce that the generating function for these reverse $h$-polynomials, after some manipulations, can be expressed as
\begin{align}
    \sum_{n=0}^\infty \bh(\I_n^\sigma,t)\,x^n
&=\sum_{n=0}^\infty \sum_{i=0}^n  \binom{n}{i}|\S_i(\sigma)| (t-1)^{n-i}\,x^n \nonumber\\
&=\sum_{i=0}^\infty |\S_i(\sigma)| \sum_{n=i}^\infty  \binom{n}{i} (t-1)^{n-i}\,x^n \nonumber\\
&=\sum_{i=0}^\infty |\S_i(\sigma)| \frac{x^i}{(1-(t-1)x)^{i+1}} \nonumber\\
&=\frac{1}{1-(t-1)x}\,P_\sigma\left(\frac{x}{1-(t-1)x}\right),
\label{eq:bh}
\end{align}
where in the third line we used the generating function identity $\frac{x^i}{1-x^{i+1}}=\sum_{n=i}^\infty \binom{n}{i}x^n$.

The generating function for $h(\I_n^\sigma,t)$ is obtained from Equation~\eqref{eq:bh} by making the substitution $(t,x)\mapsto(1/t,tx)$.
On the other hand, setting $t=0$ in this equation we obtain the generating function for $h_n(\I_n^\sigma)$.
\end{proof}

\section{Purity of $\I_n^{\sigma }$ for two large classes of patterns}\label{sec:purity}

In this section, we prove for 
two large classes of patterns $\sigma $ that $\I_n^{\sigma }$ is pure of dimension $n-1$; equivalently, that all of the maximal faces in $\I_n^{\sigma }$ are permutations in $\S_n$.

\subsection{Extremal-endpoint patterns} \label{sec:extremal-endpoint}
The first class we consider are the patterns $\sigma \in \S_k$ that start or end with their largest or smallest letter, that is, they satisfy $\{ \sigma_1,\sigma_k\} \cap \{ 1,k\} \neq \emptyset $.  We call such patterns {\em extremal-endpoint patterns}. Recall that we assume $k\ge2$ for all our patterns.  
Proving purity of dimension $n-1$ will be an important  ingredient to proving shellability of $\I_n^{\sigma }$ for such patterns $\sigma $ later in the paper.  
Let us start by proving purity for a subset of these patterns.

\begin{prop}\label{leftmost-pure}
Let $\sigma\in\S_k$  with $\sigma_1=k$, and suppose $k\ge2$.  Then every maximal face in $\I_n^{\sigma }$ is a permutation in $\S_n$, hence $\I_n^{\sigma }$ is pure of dimension $n-1$.
\end{prop}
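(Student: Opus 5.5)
To prove the statement, I will show that any $\sigma$-avoiding injective word $w$ over $[n]$ that misses some letter can be extended to a longer $\sigma$-avoiding injective word by inserting a missing letter. Repeating this, every face lies in the closure of some $\sigma$-avoiding permutation of $[n]$. Hence every maximal face is a permutation, and $\I_n^\sigma$ is pure of dimension $n-1$. By Proposition~\ref{subcomplex-property} we already know $\I_n^\sigma$ is a subcomplex, so this extension property is all that is needed.

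The construction is as follows. Let $w=w_1\dots w_r$ avoid $\sigma$ with $r<n$, and let $m$ be the largest letter of $[n]$ not appearing in $w$. I propose inserting $m$ immediately to the left of the leftmost letter of $w$ that is larger than $m$. If no such letter exists, for instance when $m=n$, I append $m$ at the end of $w$. The point of this choice is that every letter to the left of the inserted $m$ is smaller than $m$. In addition, every letter larger than $m$ lies in $w$ and appears to the right of $m$.

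It remains to check that the new word $w'$ avoids $\sigma$. Suppose $w'$ contains an occurrence of $\sigma$. Since $w$ avoids $\sigma$, this occurrence must use $m$. Because $\sigma_1=k$ and $k\ge 2$, the first entry of the occurrence is its largest entry, and every other entry is smaller and lies to its right. Now there are two cases.
\begin{enumerate}
\item Suppose $m$ plays the role of $\sigma_1$. The remaining entries lie to the right of $m$ and are smaller than $m$. Then replace $m$ by the letter $b$ immediately after it in $w'$, which is larger than $m$ (this requires $b$ to exist, discussed below). The replaced entries still form an occurrence of $\sigma$: $b$ sits weakly left of all the other entries, is not one of them since they are less than $m<b$, and is larger than all of them. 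This occurrence lies entirely in $w$, a contradiction.
\item Suppose $m$ plays some role other than $\sigma_1$. Then the entry playing $\sigma_1$ is larger than $m$ and lies to its left. This is impossible, since all letters left of $m$ are smaller than $m$.
\end{enumerate}

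I expect the main obstacle to be the boundary case in which $m$ is appended at the end, so that the replacement letter $b$ in the first case does not exist. In that case, however, all entries smaller than $m$ that participate in the occurrence would have to lie to the right of $m$, and there is nothing to the right of $m$. So $m$ can only play $\sigma_1$ if $k=1$, which is excluded by $k\ge2$. The second case is then ruled out as before, because all letters of $w$ are smaller than $m$ when $m$ is appended. Thus $w'$ avoids $\sigma$ in every case, and the induction on the length $r$ of $w$ completes the proof.
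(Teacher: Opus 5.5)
Your proof is correct and is essentially the paper's argument: insert the new letter immediately before the leftmost larger letter, or at the end if there is none. Any occurrence of $\sigma$ must then start at the inserted letter, which can be replaced by its larger right neighbour to give an occurrence in the original word. The only difference is that you fix $m$ to be the largest missing letter, whereas the paper notes that the same insertion works for any missing letter; your argument never actually uses the maximality of $m$.
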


\begin{proof} 
Let $\pi=\pi_1\dots\pi_r$ be a $\sigma$-avoiding injective word  of length $r<n$ over the alphabet $[n]$. To prove that $\pi$ cannot be a maximal face, we will show that it is possible to insert some letter $j\in[n]\setminus\{\pi_1,\dots,\pi_r\}$ into $\pi$ to create another $\sigma$-avoiding injective word $\pi'$.
We will show that, in fact, any such $j$ can be inserted in this way.

If $j$ is larger than all the letters in $\pi$, then  $\pi'=\pi_1\dots\pi_rj$ avoids $\sigma$. Indeed, since $\pi$ avoids $\sigma$, any occurrence of $\sigma$ in $\pi'$ would have to use the letter $j$, but this is not possible because $\sigma$ does not end with its largest letter (since $\sigma_1=k>\sigma_k$).

If some letters in $\pi$ are larger than $j$, let $\pi_i$ be the leftmost such letter. Then we claim that $\pi'=\pi_1\dots \pi_{i-1} j \pi_i\dots \pi_r$ avoids $\sigma$. 
Suppose for the sake of contradiction that $\pi'$ has an occurrence of $\sigma$, and note again that such an occurrence must use the letter $j$. We deduce that this occurrence does not use any letter in $\pi_1\dots\pi_{i-1}$, since these are smaller than $j$ but $\sigma$ starts with its largest letter. Thus, $j$ is the first (and hence largest) letter in the occurrence, implying that the occurrence does not use the letter $\pi_i$, since $\pi_i>j$. Now, replacing $j$ with $\pi_i$ in the occurrence would produce an occurrence of $\sigma$ in $\pi$, which is a contradiction.
\end{proof}

To generalize Proposition~\ref{leftmost-pure} to extremal-endpoint patterns, we will first show that applying certain symmetries to the pattern we obtain isomorphic complexes.
For a word $\pi=\pi_1\pi_2\dots\pi_r$ define its reversal as $\pi^R=\pi_r\dots\pi_2\pi_1$. When $\pi$ is an injective word over the alphabet $[n]$, we define its complement $\pi^C$ by replacing each entry $i$ with $n+1-i$. For a pattern $\sigma\in\S_k$, we define its complement $\sigma^C$ by replacing each entry $i$ with $k+1-i$.

\begin{prop}\label{isomorph-one}
For any pattern $\sigma$, there is an isomorphism between $\I_n^\sigma$ and $\I_n^{\sigma^R}$.
\end{prop}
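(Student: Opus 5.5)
The plan is to use the reversal map $\pi\mapsto\pi^R$ on injective words over $[n]$ and check that it restricts to an isomorphism of face posets between $\I_n^\sigma$ and $\I_n^{\sigma^R}$. Since a Boolean cell complex is determined up to isomorphism by its face poset (a simplicial poset), an isomorphism of face posets suffices.

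The key steps, in order, are as follows.

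First, reversal is an involution on the set of all injective words over $[n]$. It preserves length and the empty word, so it preserves rank in the face poset of $\I_n$.

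Second, reversal respects the closure relation. The word $\tau$ is a subword of $\pi$ if and only if $\tau^R$ is a subword of $\pi^R$: if $\tau=\pi_{i_1}\cdots\pi_{i_s}$ with $i_1<\dots<i_s$, then $\tau^R$ occurs in $\pi^R$ at the mirrored positions $r+1-i_s<\dots<r+1-i_1$. Hence reversal is an automorphism of the face poset of $\I_n$, where closure is exactly the subword relation.

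Third, reversal carries $\sigma$-avoiding words exactly onto $\sigma^R$-avoiding words. A subword $\pi_{i_1}\cdots\pi_{i_k}$ of $\pi$ is order-isomorphic to $\sigma$ if and only if its reversal, which is a subword of $\pi^R$, is order-isomorphic to $\sigma^R$. This holds because reversing both sequences preserves the relative comparisons between the entries in corresponding positions. So $\pi$ contains $\sigma$ if and only if $\pi^R$ contains $\sigma^R$.

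Combining these steps, reversal restricts to a bijection from the faces of $\I_n^\sigma$ (using Proposition~\ref{subcomplex-property} to know these form a subcomplex) to the faces of $\I_n^{\sigma^R}$. It preserves the order relation in both directions, since its inverse is again reversal. This gives the desired isomorphism. The same argument applies verbatim to a collection $\Sigma$ of patterns.

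I expect no real obstacle. The only point needing care is the third step, namely making explicit that order-isomorphism is preserved when both words are reversed. It is also worth noting that, unlike complementation, reversal does not depend on the alphabet size, so no relabeling between the alphabets $[n]$ and $[k]$ is required.
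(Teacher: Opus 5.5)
Your proposal is correct and follows the paper's approach exactly: the paper's proof likewise maps each word $\pi$ to its reversal $\pi^R$ and observes that $\pi$ avoids $\sigma$ if and only if $\pi^R$ avoids $\sigma^R$. Your additional check, that reversal preserves the subword (closure) relation, is the detail the paper leaves implicit.
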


\begin{proof}
Map each word $\pi$ to its reversal $\pi^R$, and note that $\pi$ avoids $\sigma$ if and only if $\pi^R$ avoids $\sigma^R$.
\end{proof}

\begin{prop}\label{isomorph-two}
For any pattern $\sigma$, there is an isomorphism between $\I_n^\sigma$  and $\I_n^{\sigma^C}$.
\end{prop}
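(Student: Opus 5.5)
The plan is to mirror the proof of Proposition~\ref{isomorph-one}, replacing reversal by complementation. I will define a map $\phi$ on the faces of $\I_n$ by $\phi(\pi)=\pi^C$. For an injective word $\pi=\pi_1\dots\pi_r$ over $[n]$, this means $\phi(\pi)=(n+1-\pi_1)\dots(n+1-\pi_r)$. The empty word is sent to itself.

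First I will check that $\phi$ is an automorphism of the face poset of $\I_n$. It is an involution, since $(\pi^C)^C=\pi$, so it is a bijection on injective words over $[n]$. It preserves length. It also preserves the subword relation, because deleting letters from $\pi$ and then complementing gives the same result as complementing and then deleting the corresponding letters. Since complementation is applied letter by letter with the fixed value $n$, the image of a subword of $\pi$ is exactly the corresponding subword of $\pi^C$.

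Next I will show that $\pi$ avoids $\sigma$ if and only if $\pi^C$ avoids $\sigma^C$. The key observation is that $x\mapsto n+1-x$ is order-reversing on $[n]$, and $i\mapsto k+1-i$ is order-reversing on $[k]$. So a subword $\pi_{i_1}\dots\pi_{i_k}$ is order-isomorphic to $\sigma$ exactly when the complemented subword $(n+1-\pi_{i_1})\dots(n+1-\pi_{i_k})$ is order-isomorphic to $\sigma^C$. This gives a bijection between occurrences of $\sigma$ in $\pi$ and occurrences of $\sigma^C$ in $\pi^C$, at the same positions.

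Combining the two steps, $\phi$ restricts to a bijection from the faces of $\I_n^\sigma$ to the faces of $\I_n^{\sigma^C}$ that preserves the closure relation. Since the closure of each face has the combinatorial type of a simplex, an isomorphism of face posets gives an isomorphism of the Boolean cell complexes. There is no real obstacle here. The only point needing care is that the complement of a word uses $n$ while the complement of the pattern uses $k$, and the argument only needs both maps to be order-reversing.
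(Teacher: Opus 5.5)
Your proposal is correct and follows the same approach as the paper: map each word $\pi$ to its complement $\pi^C$ (replacing each letter $i$ by $n+1-i$), and observe that $\pi$ avoids $\sigma$ if and only if $\pi^C$ avoids $\sigma^C$. You spell out details the paper leaves implicit, namely that the map preserves the subword relation because complementation uses the fixed $n$, and that both complements are order-reversing.
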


\begin{proof}
Map each word $\pi$ to its complement $\pi^C$, and note that $\pi$ avoids $\sigma$ if and only if $\pi^C$ avoids $\sigma^C$. This isomorphism is equivalent to replacing $i$ with $n+1-i$ for each letter $i$ in our alphabet $[n]$.
\end{proof}

We can now use these symmetries to derive the following generalization of Proposition~\ref{leftmost-pure},

\begin{cor}\label{cor:first-pattern-group}
For any extremal-endpoint pattern $\sigma$, all of the maximal faces in $\I_n^{\sigma }$ are permutations in $\S_n$, hence $\I_n^{\sigma }$ is pure of dimension $n-1$. 
\end{cor}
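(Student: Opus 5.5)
The plan is to reduce every extremal-endpoint pattern to the case $\sigma_1=k$ handled in Proposition~\ref{leftmost-pure}, using the symmetries of Propositions~\ref{isomorph-one} and~\ref{isomorph-two}. Let $\sigma\in\S_k$ with $\{\sigma_1,\sigma_k\}\cap\{1,k\}\neq\emptyset$. We split into four cases according to which extremal letter sits at which end.

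\begin{itemize}
\item[(i)] If $\sigma_1=k$, Proposition~\ref{leftmost-pure} applies directly.
\item[(ii)] If $\sigma_1=1$, then $\tau=\sigma^C$ satisfies $\tau_1=k+1-1=k$.
\item[(iii)] If $\sigma_k=k$, then $\tau=\sigma^R$ satisfies $\tau_1=k$.
\item[(iv)] If $\sigma_k=1$, then $\tau=(\sigma^R)^C$ satisfies $\tau_1=k$.
\end{itemize}

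In each of cases (ii)--(iv), Proposition~\ref{leftmost-pure} shows that every maximal face of $\I_n^{\tau}$ is a permutation in $\S_n$. Composing the isomorphisms of Propositions~\ref{isomorph-one} and~\ref{isomorph-two} gives an isomorphism between $\I_n^\sigma$ and $\I_n^\tau$, since reversal and complementation are involutions with $\sigma=(\tau^C)$, $\sigma=(\tau^R)$, or $\sigma=(\tau^C)^R$ respectively.

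The only point needing care is that these isomorphisms preserve the relevant structure. The maps $\pi\mapsto\pi^R$ and $\pi\mapsto\pi^C$ send injective words of length $r$ to injective words of length $r$, so they preserve dimension. They also preserve the subword relation, since a subword of $\pi$ maps to a subword of $\pi^R$ or $\pi^C$. Hence they are isomorphisms of face posets, which carry maximal faces to maximal faces and full-length words (permutations of $[n]$) to full-length words. Therefore the maximal faces of $\I_n^\sigma$ are exactly the images of the maximal faces of $\I_n^\tau$, all of which are permutations in $\S_n$. This gives purity of dimension $n-1$.

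I expect no real obstacle here. The main step is simply checking that each case lands on a pattern starting with its largest letter, and that the isomorphisms are poset isomorphisms preserving word length.
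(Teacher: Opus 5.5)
Your proposal is correct and follows the paper's proof: each extremal-endpoint pattern is reduced to one starting with its largest letter via reversal and/or complementation, and Proposition~\ref{leftmost-pure} is then transferred through the isomorphisms of Propositions~\ref{isomorph-one} and~\ref{isomorph-two}. Your explicit four-case check, and your remark that these maps preserve word length and the subword order, spell out details the paper leaves implicit.
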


\begin{proof}
    Every extremal-endpoint pattern can be obtained by applying reversal and/or complementation to a pattern that starts with its largest letter. Thus, the result follows from Proposition~\ref{leftmost-pure} using 
    Propositions~\ref{isomorph-one} and~\ref{isomorph-two}.
\end{proof}

\subsection{Patterns that do not contain both a consecutive ascent and a consecutive descent}
Next we consider another large class of patterns. The purity results in this section will be used in other shellability proofs later in the paper.

Define a {\em consecutive ascent} in a permutation pattern to be an ascent of the form $i,i+1$ for some $i$, that is, a pair of increasing entries having consecutive positions and consecutive values.
Similarly, define a {\em consecutive descent} in a permutation to be an descent of the form $i+1,i$ for some $i$.

\begin{prop}\label{prop:second-pattern-group}
For any pattern $\sigma$ either having no consecutive ascent or having no consecutive descent, all of the maximal faces in $\I_n^{\sigma }$ are permutations in $\S_n$, hence $\I_n^{\sigma}$ is pure of dimension $n-1$.
\end{prop}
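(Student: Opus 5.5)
By Proposition~\ref{isomorph-two}, complementation interchanges consecutive ascents and consecutive descents, so it suffices to treat a pattern $\sigma\in\S_k$ with no consecutive ascent. As in the proof of Proposition~\ref{leftmost-pure}, I would take a $\sigma$-avoiding injective word $\pi=\pi_1\dots\pi_r$ over $[n]$ with $r<n$ and show that some missing letter can be inserted while keeping $\sigma$ avoided. The natural insertion is to place a missing letter $j$ immediately next to a letter of $\pi$ whose value is adjacent to $j$, forming a consecutive ascent. For example, choose $j\notin\pi$ such that $j-1\in\pi$, and insert $j$ immediately after $j-1$. If no missing $j$ has $j-1\in\pi$, then the missing letters form an initial segment $\{1,\dots,m\}$, and we use $j=m$, inserted immediately before $m+1$. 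If $r=0$, any single letter works, since $k\ge2$.

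Now suppose the new word $\pi'$ contains an occurrence of $\sigma$. Since $\pi$ avoids $\sigma$, this occurrence must use $j$. Let $a$ be the neighbor of $j$, that is, $a=j-1$ (or $a=j+1$). The letters $a$ and $j$ are adjacent in position and differ by one in value, so no other letter of $\pi'$ lies strictly between them in either position or value.

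If the occurrence does not use $a$, replace $j$ by $a$. This new subword has the same relative order as the old one, because no letter of the occurrence lies between $a$ and $j$ in position or value. It therefore gives an occurrence of $\sigma$ inside $\pi$, a contradiction.

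If the occurrence uses both $a$ and $j$, then these two letters are adjacent in the occurrence, in both positions and values, in increasing order. That is exactly a consecutive ascent in $\sigma$, contradicting the hypothesis.

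Hence $\pi'$ avoids $\sigma$, no word of length less than $n$ is a maximal face, and $\I_n^\sigma$ is pure of dimension $n-1$. The one point needing care is the value-adjacency in the substitution step: the argument requires that the neighbor $a$ is present and that $a$ and $j$ differ by exactly one. This is why $j$ must be chosen as described rather than arbitrarily.
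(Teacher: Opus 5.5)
Your proof is correct and uses the same core argument as the paper: insert a missing letter next to a letter of $\pi$ that is adjacent to it in value; an occurrence of $\sigma$ avoiding that neighbor transfers back into $\pi$, and one using both letters forces a consecutive ascent or descent in $\sigma$. The paper avoids your complementation reduction and your special choice of $j$. It takes an arbitrary missing $j$ and a letter $\pi_i$ of $\pi$ nearest to $j$ in value, which works because only letters of $\pi$ need to be absent from the value gap. It then checks that inserting $j$ on one of the two sides of $\pi_i$ succeeds. So your closing remark that $j$ cannot be chosen arbitrarily is overly cautious, though it does not affect the validity of your argument.
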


\begin{proof}
    Let $\pi=\pi_1\dots\pi_r$ be a $\sigma$-avoiding injective word  of length $r<n$ over the alphabet $[n]$. We will show that, for any $j\in[n]\setminus\{\pi_1,\dots,\pi_r\}$, it is possible to insert $j$ into $\pi$ to create another $\sigma$-avoiding injective word.

    Let $\pi_i$ be a letter in $\pi$ with property that $\pi$ contains no other letters whose value is strictly between $\pi_i$ and $j$. Let $\pi'=\pi_1\dots \pi_{i-1} j \pi_i\dots \pi_r$.
    Since $\pi$ avoids $\sigma$, any occurrence of $\sigma$ in $\pi'$ must use the letter $j$. In fact, it must also use the letter $\pi_i$; otherwise, replacing $j$ with $\pi_i$ in the occurrence would produce an occurrence of $\sigma$ in $\pi$. 
    But having both $j$ and $\pi_i$ in the occurrence forces the pattern $\sigma$ to have a consecutive ascent (if $j<\pi_i$) or a consecutive descent (if $j>\pi_i$). 
    A similar argument shows that if 
    $\pi''=\pi_1\dots \pi_{i} j \pi_{i+1}\dots \pi_r$ contains $\sigma$, then $\sigma$ must have a 
    consecutive descent (if $\pi_i>j$) or a consecutive ascent
    (if $\pi_i<j$). 

    Thus, for both $\pi'$ and $\pi''$ to contain $\sigma$, this pattern would need have both a consecutive ascent and a consecutive descent, since this is true both for $\pi_i < j$ and for $\pi_i>j$. We deduce that at least one $\pi'$ and $\pi''$ avoids~$\sigma$.
\end{proof}

We will be especially interested in the following corollary.

\begin{cor}\label{cor:three-and-four-pure}
Each of the complexes $\I_n^{3412}$, $\I_n^{3142}$, $\I_n^{2413}$ and $\I_n^{2143}$ is pure of dimension $n-1$.  Thus, for any $\sigma $ of length at most  4, the complex $\I_n^{\sigma }$ is pure of dimension $n-1$.
\end{cor}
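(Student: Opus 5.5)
The plan is to apply Proposition~\ref{prop:second-pattern-group} to the four listed patterns, and then handle all remaining patterns of length at most $4$ with Corollary~\ref{cor:first-pattern-group}.

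For the first assertion, I check each pattern against the definitions of consecutive ascent (a factor $i,i+1$) and consecutive descent (a factor $i+1,i$).
\begin{itemize}
\item In $3412$, the adjacent pairs are $34$, $41$, $12$. The pairs $34$ and $12$ are consecutive ascents, and $41$ is not a consecutive descent. So $3412$ has no consecutive descent.
\item In $2143$, the adjacent pairs are $21$, $14$, $43$. Both $21$ and $43$ are consecutive descents, and $14$ is not a consecutive ascent. So $2143$ has no consecutive ascent.
\item In $3142$ and $2413$, no adjacent pair has values differing by $1$. So these patterns have neither a consecutive ascent nor a consecutive descent.
\end{itemize}
In each case the hypothesis of Proposition~\ref{prop:second-pattern-group} holds. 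Hence the corresponding complex is pure of dimension $n-1$.

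For the second assertion, let $\sigma\in\S_k$ with $2\le k\le 4$. If $\sigma$ is an extremal-endpoint pattern, Corollary~\ref{cor:first-pattern-group} applies directly. Every pattern of length $2$ or $3$ is extremal-endpoint: for $k\le 3$, either $\sigma_1$ or $\sigma_k$ equals $1$ or $k$, since otherwise both endpoints would equal the single middle value $2$, which is impossible.

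It remains to classify the patterns of length $4$ that are not extremal-endpoint. Such a pattern has $\{\sigma_1,\sigma_4\}=\{2,3\}$, with $1$ and $4$ in the middle two positions in either order. This gives exactly four patterns: $2143$, $2413$, $3142$, $3412$. These are precisely the four handled above, so the proof is complete.

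The only step requiring care is this enumeration of the non-extremal-endpoint patterns of length $4$, and it is a direct check.
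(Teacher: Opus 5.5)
Your proof is correct and is essentially the paper's argument: Proposition~\ref{prop:second-pattern-group} handles the four patterns $2143$, $2413$, $3142$, $3412$, and Corollary~\ref{cor:first-pattern-group} handles every other pattern of length at most $4$. Your explicit check of consecutive ascents and descents in each of the four patterns, and your enumeration of the non-extremal-endpoint patterns of length $4$, supply details the paper leaves implicit.
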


\begin{proof}
Each pattern of length at most 4 other than those in the set $\{ 2143, 2413, 3142, 3412 \} $ is handled by Corollary~\ref{cor:first-pattern-group}.  These other four patterns are each handled by Proposition~\ref{prop:second-pattern-group}.
\end{proof}

It is easy to generalize Proposition~\ref{prop:second-pattern-group} to collections of patterns as follows.

\begin{prop}\label{prop:second-pattern-group-collection}
Let $\Sigma$ be a collection of patterns satisfying either (or both) of these conditions: 
\begin{enumerate}
    \item no $\sigma\in\Sigma$ has a consecutive ascent, 
    \item no $\sigma\in\Sigma$ has a consecutive descent.     
\end{enumerate}
Then the complex $\I_n^{\Sigma}$ is pure of dimension $n-1$.
\end{prop}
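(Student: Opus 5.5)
The plan is to run the proof of Proposition~\ref{prop:second-pattern-group} essentially verbatim, now applied to every pattern in $\Sigma$ at once. Let $\pi=\pi_1\dots\pi_r$ be a $\Sigma$-avoiding injective word of length $r<n$ over $[n]$, and fix any $j\in[n]\setminus\{\pi_1,\dots,\pi_r\}$. Choose a letter $\pi_i$ of $\pi$ such that no letter of $\pi$ has value strictly between $\pi_i$ and $j$; such a letter exists when $r\ge1$, and when $r=0$ the word $j$ trivially avoids $\Sigma$ because all patterns have length at least $2$. As before, set $\pi'=\pi_1\dots\pi_{i-1}\,j\,\pi_i\dots\pi_r$ and $\pi''=\pi_1\dots\pi_i\,j\,\pi_{i+1}\dots\pi_r$. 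The goal is to show that at least one of $\pi',\pi''$ avoids every $\sigma\in\Sigma$. It then follows that no face of length less than $n$ is maximal, so every facet is a permutation in $\S_n$ and $\I_n^\Sigma$ is pure of dimension $n-1$.

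The key step is the pattern-by-pattern analysis from the earlier proof. Suppose some $\sigma\in\Sigma$ occurs in $\pi'$. Since $\pi$ avoids $\sigma$, the occurrence must use $j$. It must also use $\pi_i$, since otherwise replacing $j$ by $\pi_i$ would give an occurrence of $\sigma$ in $\pi$. This replacement is legitimate because $j$ and $\pi_i$ are adjacent both in position and in relative value among the letters of $\pi'$. In the occurrence, $j$ and $\pi_i$ then sit in consecutive positions with consecutive values, so $\sigma$ has a consecutive ascent if $j<\pi_i$ and a consecutive descent if $j>\pi_i$. Similarly, an occurrence of some $\sigma\in\Sigma$ in $\pi''$ forces $\sigma$ to have a consecutive descent if $\pi_i>j$ and a consecutive ascent if $\pi_i<j$.

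Now split into cases according to which condition $\Sigma$ satisfies. Suppose condition (1) holds, so no pattern in $\Sigma$ has a consecutive ascent. If $j<\pi_i$, no $\sigma\in\Sigma$ can occur in $\pi'$, so $\pi'$ avoids $\Sigma$. If $j>\pi_i$, no $\sigma\in\Sigma$ can occur in $\pi''$, so $\pi''$ avoids $\Sigma$. Condition (2) is handled the same way with the roles of $\pi'$ and $\pi''$ exchanged.

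The only point needing care, and the one subtlety compared with Proposition~\ref{prop:second-pattern-group}, is that the choice between $\pi'$ and $\pi''$ must work for all patterns in $\Sigma$ simultaneously. Different patterns could otherwise obstruct different insertions. This is exactly why the hypothesis is uniform: every pattern in $\Sigma$ lacks consecutive ascents, or every pattern lacks consecutive descents. Under that hypothesis the choice of $\pi'$ or $\pi''$ depends only on whether $j<\pi_i$ and on which condition holds, not on the particular $\sigma$. Once this is noted, the proof goes through with no further difficulty.
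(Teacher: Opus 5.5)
Your proof is correct and follows the paper's argument. You reuse the insertion of $j$ immediately before or after a value-neighbor $\pi_i$ from Proposition~\ref{prop:second-pattern-group}, and observe that because the hypothesis holds uniformly over $\Sigma$, the choice between $\pi'$ and $\pi''$ depends only on whether $j<\pi_i$ and on which condition holds. Your explicit handling of the case $r=0$, and your justification of why $j$ can be replaced by $\pi_i$, are minor details the paper leaves implicit.
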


\begin{proof}
    We use the same idea as in the proof of Proposition~\ref{prop:second-pattern-group}. 
    Let $\pi=\pi_1\dots\pi_r$ be a $\Sigma$-avoiding injective word  of length $r<n$ over the alphabet $[n]$, let $j\in[n]\setminus\{\pi_1,\dots,\pi_r\}$, and let $\pi'$ and $\pi''$ be defined as in the proof of Proposition~\ref{prop:second-pattern-group}.

    Suppose first that $j<\pi_i$. If $\Sigma$ satisfies condition (1), then $\pi'$ must avoid every $\sigma\in\Sigma$, because the only way for $\sigma$ to occur in $\pi'$ would be if $\sigma$ had a consecutive ascent. Similarly, if $\Sigma$ satisfies condition (2), then  $\pi''$ must avoid every $\sigma\in\Sigma$.

    A similar argument applies if $j>\pi_i$, but now the roles of $\pi'$ and $\pi''$ are reversed. In all cases, we deduce that at least one of the words $\pi'$ or $\pi''$ is $\Sigma$-avoiding.
\end{proof}

The following consequence of Proposition~\ref{prop:second-pattern-group-collection} will be used later as an important ingredient in our proof of shellability for $\I_n^{2413,3142}$.

\begin{cor}\label{cor:separable-pure}
The complex $\I_n^{2413,3142}$ of separable injective words over the alphabet $[n]$ is pure of dimension $n-1$.
\end{cor}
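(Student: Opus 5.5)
This is a direct application of Proposition~\ref{prop:second-pattern-group-collection} with $\Sigma=\{2413,3142\}$. The only thing to check is that one of its two hypotheses holds. I will verify that neither pattern has a consecutive ascent, meaning a pair of adjacent positions whose entries are $i,i+1$ in that order.

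For $2413$, the adjacent pairs are $(2,4)$, $(4,1)$ and $(1,3)$. None of these has the form $(i,i+1)$. For $3142$, the adjacent pairs are $(3,1)$, $(1,4)$ and $(4,2)$, and again none has the form $(i,i+1)$.

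So condition (1) of Proposition~\ref{prop:second-pattern-group-collection} holds. The same check shows there is no consecutive descent $(i+1,i)$ either, so condition (2) holds as well, though either one suffices. The proposition then gives that every maximal face of $\I_n^{2413,3142}$ is a permutation in $\S_n$, so the complex is pure of dimension $n-1$.

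There is no real obstacle here. The only care needed is to use the paper's definition of a consecutive ascent, which requires adjacent positions \emph{and} consecutive values. The patterns $2413$ and $3142$ do contain ordinary ascents such as $24$ and $14$, but those do not count.
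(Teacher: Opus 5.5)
Your proposal is correct and follows the paper's own argument: the paper also derives this corollary directly from Proposition~\ref{prop:second-pattern-group-collection}, noting that $\{2413,3142\}$ satisfies its hypotheses. Your check of the adjacent pairs, showing that neither pattern has a consecutive ascent or a consecutive descent, simply writes out the verification the paper leaves implicit.
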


\begin{proof}
    The set $\Sigma=\{2413,3142\}$ satisfies the conditions of Proposition~\ref{prop:second-pattern-group-collection}.
\end{proof}

\section{Shelling the complex of injective words avoiding an extremal-endpoint pattern}\label{shell-subcomplex}

In this section we prove shellability of $\I_n^\sigma$ for extremal-endpoint patterns $\sigma$, that is, patterns that start or end with  their smallest or largest letter, as introduced in Section~\ref{sec:extremal-endpoint}.
We will show that $\I_n^\sigma$ is shellable for such patterns.

We first consider patterns that start with their largest letter.
We will prove that for any $\sigma\in\S_k$ with $\sigma_1=k$, the complex $\I_n^\sigma$ is shellable with respect to the lexicographic order.
We start by characterizing the blockers in the complex $\I_n^\sigma$ for such a pattern.

\begin{lem}\label{lem:blockers}
    Let $\sigma\in\S_k$ with $\sigma_1=k$, and let $\pi\in\S_n(\sigma)$. The entry $\pi_j$ is a $\sigma$-blocker with respect to $\lex$ if and only if it is an unrestricted blocker with respect to $\lex$, i.e., it satisfies conditions (a) or (b) from Lemma~\ref{lem:unrestricted_blockers}.
\end{lem}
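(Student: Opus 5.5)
One direction is immediate from Lemma~\ref{lem:no_gained_blockers}: every $\sigma$-blocker is an unrestricted blocker. So the work is the converse. Given $\pi\in\S_n(\sigma)$ and an entry $\pi_j$ satisfying (a) or (b) of Lemma~\ref{lem:unrestricted_blockers}, we must move $\pi_j$ to a new position so that the result $\pi'$ is lexicographically smaller than $\pi$ \emph{and} still avoids $\sigma$. The plan is to choose the destination using the same insertion rule as in the proof of Proposition~\ref{leftmost-pure}, applied to the word $\tau=\pi_1\dots\widehat{\pi_j}\dots\pi_n$, which avoids $\sigma$.

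\emph{Case (a).} Some larger entry lies to the left of $\pi_j$. Let $\pi_i$ be the leftmost entry of $\tau$ that is larger than $\pi_j$; then $i<j$. Insert $\pi_j$ immediately before $\pi_i$. The argument in Proposition~\ref{leftmost-pure} shows that this word avoids $\sigma$.
\begin{itemize}
\item[$\bullet$] Any occurrence must use $\pi_j$.
\item[$\bullet$] It cannot use the letters before $\pi_i$, since these are all smaller than $\pi_j$ and $\sigma_1=k$ is the largest letter.
\item[$\bullet$] So $\pi_j$ plays the role of $\sigma_1$, and the occurrence cannot use $\pi_i>\pi_j$.
\item[$\bullet$] Replacing $\pi_j$ by $\pi_i$ then gives an occurrence of $\sigma$ in $\tau$, a contradiction.
\end{itemize}
The new word agrees with $\pi$ before position $i$ and has $\pi_j<\pi_i$ at position $i$, so it is lexicographically smaller.

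\emph{Case (b).} Here $\pi_j>\pi_{j+1}$, but every entry to the left of $\pi_j$ is smaller than it; otherwise case (a) applies. So $\pi_j$ is a record. Since all entries to its left are smaller, the leftmost larger entry of $\tau$, if it exists, lies to the right of $\pi_{j+1}$. If there is none, append $\pi_j$ at the end. This avoids $\sigma$ because $\sigma$ does not end with its largest letter. If there is one, insert $\pi_j$ immediately before the leftmost larger entry, which is again safe by the argument of Case (a).

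In both subcases $\pi_j$ moves strictly to the right past $\pi_{j+1}<\pi_j$. The resulting word agrees with $\pi$ before position $j$ and has $\pi_{j+1}<\pi_j$ in position $j$, so it is lexicographically smaller.

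The main point to check is that in Case (b) the Proposition~\ref{leftmost-pure} insertion position really lands to the right of position $j$. This is where we need $\pi_j$ to be a record, and it is why we reduce to Case (a) whenever (a) holds. Otherwise the argument is a direct reuse of the purity proof.
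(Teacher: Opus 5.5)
Your proof is correct. The reduction via Lemma~\ref{lem:no_gained_blockers} and your Case (a) coincide with the paper's argument: the paper also moves $\pi_j$ to just before the leftmost larger entry $\pi_i$, and rules out occurrences of $\sigma$ exactly as you do.

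The only divergence is Case (b). The paper simply swaps $\pi_j$ and $\pi_{j+1}$. Any new occurrence of $\sigma$ would then have to use both letters. That is impossible: $\pi_j$ is still a record after the swap, so it can only play the role of $\sigma_1$, yet $\pi_{j+1}$ would precede it.

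You instead send $\pi_j$ rightward to the insertion point of Proposition~\ref{leftmost-pure}. This costs an extra subcase, appending at the end, where you need $\sigma_k\neq k$ (i.e.\ $k\ge 2$). In return it gives a uniform picture. Inserting $\pi_j$ into $\tau$ by that rule always yields a $\sigma$-avoiding word. That word equals $\pi$ itself when $\pi_j$ is an ephemeral record, and is strictly lexicographically smaller otherwise.

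The paper's adjacent swap is the more economical move. Yours makes the link between the purity proof and the blocker characterization explicit.
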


\begin{proof}
By Lemma~\ref{lem:no_gained_blockers}, every $\sigma$-blocker is also an unrestricted blocker. Thus, it suffices to show that every unrestricted blocker (equivalently, by Lemma~\ref{lem:unrestricted_blockers}, any $\pi_j$ satisfying conditions (a) or (b)) is also a $\sigma$-blocker.

Suppose first that $\pi_j$ satisfies condition (a), and let $\pi_i$ be the leftmost entry such that $\pi_i>\pi_j$. Let
$\pi'=\pi_1\dots\pi_{i-1}\pi_j\pi_i\pi_{i+1}\dots\widehat{\pi_j}\dots\pi_n$ be the permutation obtained by moving $\pi_j$ to position $i$. Then $\pi'\lex\pi$, so it suffices to show that $\pi'$ avoids $\sigma$.
Since $\pi_1\dots \widehat{\pi_j} \dots \pi_n$ avoids $\sigma$, the only way for $\pi'$ to have an occurrence of $\sigma$ is if the letter $\pi_j$ that has been moved is part of this occurrence. In this case, since $\pi_j$ is larger than all letters to its left in $\pi'$, and $\sigma$ starts with its largest letter, $\pi_j$ must be the first letter of the occurrence. Since $\pi_j < \pi_{i}$, the letter $\pi_{i}$ cannot be part of the occurrence, and in fact we could replace $\pi_j$ with $\pi_{i}$ to find another occurrence of $\sigma$ within $\pi'$. But then this occurrence would already have been present in $\pi$, a contradiction.

Suppose now that $\pi_j$ does not safisfy condition (a) but it satisfies condition (b), i.e., $\pi_j$ is a record and $\pi_j>\pi_{j+1}$.
Let $\pi' = \pi_1\dots \pi_{j-1}\pi_{j+1}\pi_j\pi_{j+1}\dots \pi_n$ be obtained by swapping $\pi_j$ and $\pi_{j+1}$. Then $\pi'\lex\pi$, so it suffices to show that $\pi'$ avoids $\sigma$.  Since $\pi$ avoids $\sigma$, the only way for $\pi'$ to have an occurrence of $\sigma$ is if both $\pi_{j+1}$ and $\pi_j$ are part of the occurrence. But this is impossible because $\sigma$ starts with its largest letter, whereas $\pi_j$ is a record in $\pi'$ (since it is a record in $\pi$ and $\pi_j>\pi_{j+1}$).
\end{proof}

When $\sigma$ starts with its largest letter, Lemma~\ref{lem:blockers} states that an entry of $\pi$ is a $\sigma$-blocker with respect to $\lex$ if and only if it is not an ephemeral record. Thus, the number of $\sigma$-blockers of $\pi\in\S_n(\sigma)$ is $n-\er(\pi)$ in this case.

\begin{thm}\label{thm:shellability_start_with_k}
For any pattern $\sigma\in\S_k$ with $\sigma_1=k$, the complex $\I_n^\sigma$ is pure of dimension $n-1$ and is shellable with respect to $\lex$.   
 Moreover, 
$$h_i(\I_n^\sigma)=|\{\pi\in\S_n(\sigma):\er(\pi)=n-i\}|,$$
and so
$$h(\I_n^\sigma,t)=\sum_{\pi\in\S_n(\sigma)} t^{n-\er(\pi)}.$$
In particular, the homology of $\I_n^\sigma$ is concentrated in top degree, and its rank is
$$h_n(\I_n^\sigma)=|\L_n(\sigma )|=(-1)^{n-1}\tilde{\chi}(\I_n^{\sigma }).$$ 
\end{thm}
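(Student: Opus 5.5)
The plan is to assemble the theorem from results already in place; almost nothing new has to be proved. Purity of dimension $n-1$ is exactly Proposition~\ref{leftmost-pure}, so the facets of $\I_n^\sigma$ are precisely the elements of $\S_n(\sigma)$. Hypothesis (1) of Theorem~\ref{thm:shelling-via-unrestricted-blockers} therefore holds. Hypothesis (2), that $\sigma$-blockers with respect to $\lex$ coincide with unrestricted blockers, is the content of Lemma~\ref{lem:blockers}. Theorem~\ref{thm:shelling-via-unrestricted-blockers} then says that listing $\S_n(\sigma)$ lexicographically gives a shelling of $\I_n^\sigma$.

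For the $h$-numbers, I would apply Lemma~\ref{h-via-blockers} to this shelling: $h_i(\I_n^\sigma)$ is the number of facets with exactly $i$ blockers. By Lemma~\ref{lem:blockers} combined with Lemma~\ref{lem:unrestricted_blockers}, the blockers of $\pi\in\S_n(\sigma)$ are exactly its entries that are not ephemeral records. Hence $\pi$ has $n-\er(\pi)$ blockers, which gives
\[
h_i(\I_n^\sigma)=|\{\pi\in\S_n(\sigma):\er(\pi)=n-i\}|.
\]
Summing $t^i h_i$ over $i$ yields the stated formula for $h(\I_n^\sigma,t)$.

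For the final assertions, a pure shellable Boolean cell complex is homotopy equivalent to a wedge of top-dimensional spheres. So its reduced homology is concentrated in degree $n-1$, and the rank there equals the number of homology facets, namely $h_n$. Setting $i=n$ in the formula, $h_n$ counts the $\sigma$-avoiding permutations with $\er(\pi)=0$, which is $|\L_n(\sigma)|$. Equation~\eqref{eq:chi-h} with $d=n$ gives $\tilde\chi(\I_n^\sigma)=(-1)^{n-1}h_n$, which is the stated identity after multiplying by $(-1)^{n-1}$.

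I do not expect a real obstacle, since the substantive work sits in Lemma~\ref{lem:blockers} and in the contradiction argument inside Theorem~\ref{thm:shelling-via-unrestricted-blockers}. One point does need checking: in the proof of Theorem~\ref{thm:shelling-via-unrestricted-blockers}, the earlier facet $\pi'$ sharing a face with $\pi$ must be $\sigma$-avoiding. This holds automatically, because every facet of $\I_n^\sigma$ lies in $\S_n(\sigma)$, so that argument applies verbatim.
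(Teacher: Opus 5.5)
Your proposal is correct and follows the paper's proof essentially verbatim. Purity comes from Proposition~\ref{leftmost-pure}; Lemma~\ref{lem:blockers} lets you invoke Theorem~\ref{thm:shelling-via-unrestricted-blockers} for shellability; and Lemma~\ref{h-via-blockers}, with each $\pi$ having $n-\er(\pi)$ blockers, gives the $h$-numbers. Your extra remarks on top-degree homology and Equation~\eqref{eq:chi-h} only spell out what the paper leaves to the background in Section~\ref{sec:shellability} and to Corollary~\ref{Euler-char}.
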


\begin{proof}
The fact that $\I_n^{\sigma }$ is pure of dimension $n-1$ is proven in Proposition~\ref{leftmost-pure}.
By Lemma~\ref{lem:blockers} the $\sigma$-blockers in any $\sigma$-avoiding permutation with respect to $\lex$ are exactly the unrestricted blockers.   
This allows us to invoke Theorem~\ref{thm:shelling-via-unrestricted-blockers} to deduce the desired shellability result. 

The statement about $h$-numbers follows from Lemma~\ref{h-via-blockers}, using the fact that the number of $\sigma$-blockers of $\pi\in\S_n(\sigma)$ with respect to $\lex$ is $n-\er(\pi)$.
\end{proof}

\begin{ex}
Let $\sigma=312$. For $n=3$, ordering the permutations in $\S_3(312)$ lexicographically,  we have the shelling order
$123, 1\d{3}\d{2}, \d{2}\d{1}3, 2\d{3}\d{1}, \d{3}\d{2}\d{1}$ on the facets of $\I_3^{312}$.   The dots underneath indicate which letters are blockers.

For $n=4$, the lexicographic shelling order of $\S_4(312)$ is
$$1234, 12\d{4}\d{3},1\d{3}\d{2}4, 13\d{4}\d{2}, 1\d{4}\d{3}\d{2}, \d{2}\d{1}34, \d{2}\d{1}\d{4}\d{3}, 2\d{3}\d{1}4, 23\d{4}\d{1}, 2\d{4}\d{3}\d{1},  \d{3}\d{2}\d{1}4, \d{3}\d{2}\d{4}\d{1},3\d{4}\d{2}\d{1}, \d{4}\d{3}\d{2}\d{1}.$$
\end{ex}

Recall that $\L_n(\sigma)$ denotes the set of $\sigma$-avoiding permutations in $\S_n$ no ephemeral records.

\begin{cor}\label{homology-facet-description}
For any pattern $\sigma\in\S_k$ with $\sigma_1=k$, the homology facets of the shelling for $\I_n^{\sigma }$ given by the lexicographic order
are exactly those permutations in $\L_n(\sigma)$.
 \end{cor}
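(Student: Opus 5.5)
The plan is to unwind the definition of a homology facet and then use the blocker characterization already obtained. By Theorem~\ref{thm:shellability_start_with_k}, the lexicographic order on $\S_n(\sigma)$ is a shelling of $\I_n^\sigma$, and by Proposition~\ref{leftmost-pure} its facets are exactly the elements of $\S_n(\sigma)$, each a face with $n$ vertices. By definition, a facet $\pi$ is a homology facet precisely when every codimension one face of $\overline{\pi}$ is shared with a lexicographically earlier facet. Deleting the vertex $\pi_j$ gives such a codimension one face, so this condition says exactly that every entry $\pi_j$ is a $\sigma$-blocker in the sense of Definition~\ref{def:blocker}.

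Next I would invoke Lemma~\ref{lem:blockers}: since $\sigma_1=k$, the $\sigma$-blockers of $\pi$ with respect to $\lex$ are exactly its unrestricted blockers. By Lemma~\ref{lem:unrestricted_blockers}, these are the entries that are not ephemeral records. Hence $\pi$ is a homology facet if and only if $\pi$ has no ephemeral records, that is, $\er(\pi)=0$.

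Combining these, the homology facets are the permutations in $\S_n(\sigma)$ with $\er(\pi)=0$, which is the set $\L_n(\sigma)$ by definition. As a consistency check, this agrees with $h_n(\I_n^\sigma)=|\L_n(\sigma)|$ from Theorem~\ref{thm:shellability_start_with_k}, since $h_n$ counts facets whose restriction face is the whole facet.

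There is one point to handle carefully. For $\pi=12\cdots n$, the first facet in the order, every entry is an ephemeral record, so it has no blockers and is not a homology facet. This is consistent, because $12\cdots n\notin\L_n(\sigma)$ when $n\ge1$; note that the final entry $n$ of any permutation is always an ephemeral record in that case. The convention that the first facet is not a homology facet therefore causes no discrepancy, and the argument is otherwise a direct assembly of the earlier lemmas with no real obstacle.
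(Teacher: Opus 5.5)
Your proposal is correct and is essentially the argument the paper intends, since the paper states this corollary without a separate proof. Both read it off from Lemma~\ref{lem:blockers}: the $\sigma$-blockers are exactly the entries that are not ephemeral records. A facet is a homology facet exactly when every entry is a blocker, i.e.\ $\er(\pi)=0$, which means $\pi\in\L_n(\sigma)$.
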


 \begin{cor}\label{Euler-char}
For any pattern $\sigma\in\S_k$ with $\sigma_1=k$, we have 
$\tilde{\chi }(\I_n^{\sigma }) = (-1)^{n-1}|\L_n(\sigma )|$. 
 \end{cor}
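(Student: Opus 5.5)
The plan is to read Corollary~\ref{Euler-char} directly off the shelling in Theorem~\ref{thm:shellability_start_with_k}, combining three facts already available. By Proposition~\ref{leftmost-pure}, $\I_n^\sigma$ is pure of dimension $n-1$, so $d=n$ and Equation~\eqref{eq:chi-h} gives
\[
\tilde{\chi}(\I_n^\sigma)=(-1)^{n-1}h_n(\I_n^\sigma).
\]
It therefore suffices to show that $h_n(\I_n^\sigma)=|\L_n(\sigma)|$.

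For the top $h$-number, I use Theorem~\ref{thm:shellability_start_with_k}. It says the lexicographic order is a shelling and that
\[
h_i(\I_n^\sigma)=|\{\pi\in\S_n(\sigma):\er(\pi)=n-i\}|.
\]
Taking $i=n$ gives the number of $\sigma$-avoiding permutations with no ephemeral records, which is $|\L_n(\sigma)|$ by definition.

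As a cross-check, I can argue through homology facets instead. Corollary~\ref{homology-facet-description} identifies the homology facets as exactly the elements of $\L_n(\sigma)$. In a pure shellable complex, the number of homology facets equals $\beta_{n-1}=h_n$. Combining this with $\tilde\chi=(-1)^{n-1}\beta_{n-1}$ from Section~\ref{sec:shellability} gives the same formula.

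There is no real obstacle here, because all the substantive work, namely the blocker characterization in Lemma~\ref{lem:blockers} and the shelling, has already been done. The only thing to check is the sign convention: the $-1$ empty-face term in Equation~\eqref{eq:reduced} must be consistent with Equation~\eqref{eq:chi-h}. I would verify this on a small case. For $n=1$ and $\sigma=21$, the complex is a single vertex, so $\tilde\chi=0$, and $\L_1=\emptyset$ because $1$ is an ephemeral record, so both sides are $0$. For $n=2$ and $\sigma=21$, the complex is an edge with $\L_2(21)=\emptyset$, again consistent.
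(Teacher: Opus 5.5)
Your proposal is correct and takes the same approach as the paper. The paper combines Equation~\eqref{eq:chi-h} with the $i=n$ case of Theorem~\ref{thm:shellability_start_with_k} to get $\tilde{\chi}(\I_n^\sigma)=(-1)^{n-1}h_n(\I_n^\sigma)=(-1)^{n-1}|\L_n(\sigma)|$; your homology-facet cross-check and small-case sign checks are valid but not needed.
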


 \begin{proof}
Using Equation~\eqref{eq:chi-h} and setting $i=n$ in Theorem~\ref{thm:shellability_start_with_k}, we get
$\tilde{\chi }(\I_n^{\sigma }) = (-1)^{n-1}h_n(\I_n^{\sigma }) = (-1)^{n-1}|\L_n(\sigma )|$. 
 \end{proof}

For patterns that start with their largest letter, shellability of $\I_n^\sigma$, together with our characterization of $\sigma$-blockers with respect to $\lex$, allows us to refine the enumeration of $\sigma$-avoiding permutation by the number of ephemeral records.

\begin{cor}\label{cor:refined_enumeration}
    For any pattern $\sigma\in\S_k$ with $\sigma_1=k$, we have
     $$\sum_{n\ge0}\sum_{\pi\in\S_n(\sigma)}t^{\er(\pi)}x^n
     =\sum_{n=0}^\infty \bh(\I_n^\sigma,t)\,x^n
=\frac{1}{1-(t-1)x}\,P_\sigma\left(\frac{x}{1-(t-1)x}\right).
     $$
     In particular, setting $t=0$,
     $$\sum_{n\ge0}\card{\L_n(\sigma)}\,x^n
     =\sum_{n=0}^\infty h_n(\I_n^\sigma)\,x^n
=\frac{1}{1+x}\,P_\sigma\left(\frac{x}{1+x}\right).
     $$
\end{cor}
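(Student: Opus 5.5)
The plan is to combine the $h$-polynomial formula of Theorem~\ref{thm:shellability_start_with_k} with the generating function identity of Proposition~\ref{prop:GFh}. No new combinatorics is needed.

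First, I will rewrite the combinatorial description of the $h$-numbers in reversed form. Theorem~\ref{thm:shellability_start_with_k} gives, for $\sigma_1=k$,
\[
h(\I_n^\sigma,t)=\sum_{\pi\in\S_n(\sigma)} t^{n-\er(\pi)}.
\]
Since $\I_n^\sigma$ is pure of dimension $n-1$ (Proposition~\ref{leftmost-pure}), we have $d=n$. Definition~\eqref{eq:h-reverse} then yields
\[
\bh(\I_n^\sigma,t)=t^n h(\I_n^\sigma,1/t)=\sum_{\pi\in\S_n(\sigma)} t^{\er(\pi)}.
\]
Multiplying by $x^n$ and summing over $n\ge 0$ gives the first equality. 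The case $n=0$ is trivial: both sides equal $1$, coming from the empty permutation and the empty complex.

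Second, the second equality in the first display is exactly Proposition~\ref{prop:GFh}. That proposition holds for every pattern, since it only uses the face counts $f_{i-1}=\binom{n}{i}|\S_i(\sigma)|$.

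Finally, I will set $t=0$. On the left, $t^{\er(\pi)}$ at $t=0$ is $1$ exactly when $\er(\pi)=0$, that is, when $\pi\in\L_n(\sigma)$, and is $0$ otherwise. Also, $\bh(\I_n^\sigma,0)=h_n(\I_n^\sigma)$. On the right, substituting $t=0$ gives $\frac{1}{1+x}P_\sigma\bigl(\frac{x}{1+x}\bigr)$, which matches Equation~\eqref{eq:Euler_characteristic}.

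There is no real obstacle here. The only points needing care are the bookkeeping step that reversing the $h$-polynomial with $d=n$ turns the exponent $n-\er(\pi)$ into $\er(\pi)$, and the convention $0^0=1$ at $t=0$.
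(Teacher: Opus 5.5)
Your proposal is correct and takes the same approach as the paper, whose proof simply combines Theorem~\ref{thm:shellability_start_with_k} with Proposition~\ref{prop:GFh}. Your explicit reversal step $\bh(\I_n^\sigma,t)=t^n h(\I_n^\sigma,1/t)=\sum_{\pi\in\S_n(\sigma)} t^{\er(\pi)}$ and the $t=0$ specialization are exactly the bookkeeping the paper leaves implicit.
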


\begin{proof}
    This follows by combining Theorem~\ref{thm:shellability_start_with_k} and Proposition~\ref{prop:GFh}.
\end{proof}

We can now apply symmetries to deduce shellability of complexes of injective words avoiding any extremal-endpoint pattern.

\begin{cor}\label{cor:extremal-endpoint}
For any pattern $\sigma$ that starts or ends with its smallest or its largest letter, the complex $\I_n^\sigma$ is shellable.
\end{cor}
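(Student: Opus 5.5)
We reduce to Theorem~\ref{thm:shellability_start_with_k} by means of the symmetries in Propositions~\ref{isomorph-one} and~\ref{isomorph-two}. Let $\sigma\in\S_k$ be an extremal-endpoint pattern, so $\{\sigma_1,\sigma_k\}\cap\{1,k\}\neq\emptyset$. We split into four cases according to which condition holds, and in each case we exhibit a pattern $\tau$ with $\tau_1=k$ obtained from $\sigma$ by reversal and/or complementation.
\begin{enumerate}
\item If $\sigma_1=k$, take $\tau=\sigma$.
\item If $\sigma_1=1$, take $\tau=\sigma^C$, since $(\sigma^C)_1=k+1-1=k$.
\item If $\sigma_k=k$, take $\tau=\sigma^R$, whose first letter is $\sigma_k=k$.
\item If $\sigma_k=1$, take $\tau=(\sigma^R)^C$, whose first letter is $k+1-\sigma_k=k$.
\end{enumerate}
In every case $\sigma$ is recovered from $\tau$ by applying the same operations again, because reversal and complementation are involutions that commute with each other.

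Next we apply Propositions~\ref{isomorph-one} and~\ref{isomorph-two} (twice in the last case) to obtain an isomorphism of Boolean cell complexes between $\I_n^\sigma$ and $\I_n^\tau$. The maps $\pi\mapsto\pi^R$ and $\pi\mapsto\pi^C$ are bijections on injective words over $[n]$ that preserve the subword relation. Hence they induce isomorphisms of face posets, and therefore of the regular CW complexes themselves.

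Theorem~\ref{thm:shellability_start_with_k} says that $\I_n^\tau$ is pure of dimension $n-1$ and shellable, with the lexicographic order as a shelling. The remaining step is to observe that shellability is preserved under isomorphism of face posets. The combinatorial definition in Remark~\ref{rk:combin-shelling} (a unique minimal new face at each step) is phrased entirely in terms of the face poset. So the image of the lexicographic order of $\S_n(\tau)$ under the inverse isomorphism is a shelling order of $\I_n^\sigma$. Concretely, this image is: sort the $\sigma$-avoiding permutations $\pi$ by the lexicographic order of $\pi^R$, of $\pi^C$, or of $(\pi^R)^C$, as appropriate to the case.

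There is no real obstacle here. The only point requiring care is confirming that the symmetry maps preserve the closure relation, which holds because they carry subwords to subwords. Purity of $\I_n^\sigma$ is already given by Corollary~\ref{cor:first-pattern-group}.
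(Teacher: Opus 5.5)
Your proposal is correct and follows the paper's own argument: you use reversal and complementation (Propositions~\ref{isomorph-one} and~\ref{isomorph-two}) to reduce to a pattern starting with its largest letter, then apply Theorem~\ref{thm:shellability_start_with_k}. Your explicit four-case choice of $\tau$ and your remark that shellability transfers across face-poset isomorphisms simply spell out the details the paper leaves implicit.
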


\begin{proof}
This follows from Theorem~\ref{thm:shellability_start_with_k} by applying the symmetries in Propositions~\ref{isomorph-one} and~\ref{isomorph-two}.  
\end{proof}

 \section{Patterns of length 3}\label{sec:length3}

Since every pattern of length 3 is an extremal-endpoint pattern,  Corollary~\ref{cor:extremal-endpoint} implies that the corresponding complexes are shellable.
The isomorphisms mentioned in the next result are given by Propositions~\ref{isomorph-one} and~\ref{isomorph-two}.

\begin{cor}\label{cor:length3}
For every $n\ge0$ and every pattern $\sigma\in\S_3$, the complex $\I_n^{\sigma}$ is shellable. 
Additionally, we have the complex isomorphisms $\I_n^{123}\cong\I_n^{321}$ and $\I_n^{132}\cong\I_n^{213}\cong\I_n^{231}\cong\I_n^{312}$.
\end{cor}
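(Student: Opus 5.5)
The statement has two parts: shellability for every $\sigma\in\S_3$, and the listed isomorphisms. For shellability, observe that every permutation of $\{1,2,3\}$ has $1$ or $3$ in its first or last position. Indeed, if $\sigma_1$ and $\sigma_3$ both avoided $\{1,3\}$, both would have to equal $2$, which is impossible. So every $\sigma\in\S_3$ is an extremal-endpoint pattern, and Corollary~\ref{cor:extremal-endpoint} gives shellability of $\I_n^\sigma$ directly. For $n\le 2$ (including $n=0$, where the complex is just the empty face) every injective word avoids $\sigma$. Then $\I_n^\sigma=\I_n$, which is shellable by Theorem~\ref{thm:shelling-via-unrestricted-blockers}, so these degenerate cases need no separate treatment.

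For the isomorphisms, I will track the orbits of $\S_3$ under the group generated by reversal $R$ and complementation $C$. By Propositions~\ref{isomorph-one} and~\ref{isomorph-two}, applying $R$ or $C$ to the pattern yields an isomorphic complex, and so does any composite of them.

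\begin{itemize}
\item For the first orbit: $123^R=321$, and $123^C=321$ as well, so $\{123,321\}$ is closed under both operations. This gives $\I_n^{123}\cong\I_n^{321}$.
\item For the second orbit:
\begin{itemize}
\item $132^R=231$;
\item $132^C=312$;
\item $(132^R)^C=231^C=213$.
\end{itemize}
So $132$, $231$, $312$ and $213$ lie in a single orbit.
\end{itemize}

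Chaining the corresponding isomorphisms yields $\I_n^{132}\cong\I_n^{213}\cong\I_n^{231}\cong\I_n^{312}$.

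Nothing here is a real obstacle; the only care needed is checking that the orbit computation is exhaustive. The statement only claims these isomorphisms, not that the two classes are non-isomorphic, so no invariant distinguishing them is needed.
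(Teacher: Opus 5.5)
Your proposal is correct and follows the paper's own argument: every $\sigma\in\S_3$ is an extremal-endpoint pattern, so shellability follows from Corollary~\ref{cor:extremal-endpoint}, and the isomorphisms come from the reversal and complementation maps of Propositions~\ref{isomorph-one} and~\ref{isomorph-two}. Your orbit computations are accurate. The separate remark about $n\le 2$ is harmless but unnecessary, since Corollary~\ref{cor:extremal-endpoint} already covers every $n$.
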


Theorem~\ref{thm:shellability_start_with_k} implies, specifically, that the lexicographic order is a shelling order for both 
$\I_n^{321}$ and $\I_n^{312}$. Using Proposition~\ref{isomorph-one}, it follows that the lexicographic order on the reversal of the words is a shelling order for $\I_n^{123}$ and $\I_n^{213}$.
Applying Proposition~\ref{isomorph-two}, a shelling order for $\I_n^{132}$ is given by the lexicographic order on words where the alphabet is ordered $n<n-1<\dots<2<1$. For example, for $n=3$, the ordering of the facets is $321,312,231,213,123$.
Finally, by Proposition~\ref{isomorph-one}, this same ordering applied to the reversal of the words is a shelling order for $\I_n^{231}$.
For example, for $n=3$, the ordering of the facets is $123,213,132,312,321$.

It is natural to ask whether the complexes $\I_n^{321}$ and $\I_n^{312}$ are isomorphic, and more generally, whether $\I_n^{\sigma}\cong\I_n^{\tau}$ whenever $\sigma$ and $\tau$ are Wilf-equivalent patterns --- after all, we know by Corollary~\ref{cor:refined_enumeration} that these two complexes have the same $h$-numbers. It turns out that this is not the case.

\begin{prop}\label{prop:not_isomorphic}
    For $n\ge4$, the complexes $\I_n^{321}$ and $\I_n^{312}$ are not isomorphic.
\end{prop}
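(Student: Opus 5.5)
The plan is to find a combinatorial invariant of the face poset that takes different values on the two complexes. A cell-complex isomorphism of Boolean cell complexes induces an isomorphism of face posets, so it preserves the multiset of \emph{edge degrees}: for each $1$-dimensional face $e$, the number $\deg(e)$ of $2$-dimensional faces having $e$ in their closure. I will compute this multiset for $\I_n^{321}$ and $\I_n^{312}$ and show the two differ when $n\ge 4$.

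The computation reduces to one triple of letters at a time. An edge is an injective word $ab$. Every $2$-face containing it is an injective word on $\{a,b,x\}$ for some third letter $x$, with $ab$ as a subword. So $\deg(ab)$ is a sum over $x\notin\{a,b\}$ of local counts. Each local count depends only on the relative order of $a,b,x$, and hence on the list $\S_3(\sigma)$.

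First I do the check in $\S_3$. Let an edge $ij$ of a triple $\{1,2,3\}$ be \emph{heavy} if it lies in $3$ of the $5$ allowed triangles, and \emph{light} if it lies in $2$.
\begin{itemize}
\item For $\sigma=321$ the allowed words are $123,132,213,231,312$. The three increasing edges $12,13,23$ are heavy and the three decreasing edges are light.
\item For $\sigma=312$ the allowed words are $123,132,213,231,321$. The heavy edges are $21,13,23$ and the light edges are $12,31,32$.
\end{itemize}

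Next I globalize. In $\I_n^{321}$, an edge $ab$ with $a<b$ is increasing in every triple containing it, so it is always heavy and $\deg(ab)=3(n-2)$. By the same reasoning a decreasing edge has $\deg=2(n-2)$. Hence every edge degree lies in $\{2(n-2),3(n-2)\}$.

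In $\I_n^{312}$, consider the edge $23$.
\begin{itemize}
\item If $x=1$, the letters $2,3$ play the roles of $2,3$ in $\{1,2,3\}$, so the edge is heavy and contributes $3$.
\item If $x>3$, the letters $2,3$ play the roles of $1,2$, so the edge is light and contributes $2$.
\end{itemize}
Therefore
$$\deg(23)=3+2(n-3)=2n-3.$$

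Finally I compare. We have $2n-3=3(n-2)$ only when $n=3$, and $2n-3=2(n-2)$ never holds. So for $n\ge4$ the complex $\I_n^{312}$ has an edge whose degree does not occur in $\I_n^{321}$, and the complexes are not isomorphic.

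The only delicate point is justifying that the edge-degree multiset is an isomorphism invariant. This holds because any isomorphism of Boolean cell complexes is in particular an isomorphism of face posets, and such a map preserves ranks and cover relations. The rest is the short finite check above.
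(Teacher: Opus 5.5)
Your proposal is correct and is essentially the paper's proof: both show that every edge of $\I_n^{321}$ lies in either $3(n-2)$ or $2(n-2)$ two-dimensional faces, while some edge of $\I_n^{312}$ lies in exactly $2n-3$ of them. The differences are cosmetic: you use the edge $23$ and organize the count with the local heavy/light check in $\S_3$, whereas the paper uses the edge $13$; both give $3+2(n-3)=2n-3$.
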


\begin{proof}
    Both complexes have $n(n-1)$ faces of dimension $1$, which are all the words of the form $ab$ with $a,b\in[n]$ and $a\neq b$.

    Let us count how many $321$-avoiding injective words over $[n]$ of length 3 contain a given word $ab$. If $a<b$, this number is $3n-6$, since each of the $n-2$ letters in $[n]\setminus\{a,b\}$ can be inserted in any of the $3$ available positions without creating an occurrence of $321$. If $a>b$, this number is $2n-4$, because each letter can be inserted in $2$ positions, noting that there is always one position that creates a pattern $321$. 

    On the other hand, the number of $312$-avoiding injective words over $[n]$ of length 3 that contain the word $13$ is $3+2(n-3)=2n-3$.
    Indeed, the letter 2 can be inserted in any of the 3 available positions, whereas any letter $c\ge 4$ can be inserted in 2 positions (any but the first) without creating a pattern $312$. For $n\ge 4$, the number $2n-3$ is not equal to either $3n-6$ or $2n-4$, so the face posets of $\I_n^{321}$ and $\I_n^{312}$ are not isomorphic.
\end{proof}

In the rest of this section we discuss some enumerative results for patterns of length 3 and the associated pattern-avoiding complexes of injective words. We will give two different derivations of the generating function for the $h$-polynomials of these complexes. The first one will rely on the following enumeration of $312$-avoiding permutations and $321$-avoiding permutations by the number of ephemeral records.

\begin{lem}\label{lem:Riordan}
Let $\sigma\in\{312,321\}$. The generating function for $\sigma$-avoiding permutations with respect to the number of ephemeral records is
\begin{equation}\label{eq:GF312}\sum_{n=0}^\infty \sum_{\pi\in\S_n(\sigma)} t^{\er(\pi)} x^n
=\frac{2}{1-(t-1)x+\sqrt{(1-(t-1)x)(1-(t+3)x)}}
.
\end{equation}
In particular, setting $t=0$, we have
$|\L_n(\sigma)|=R_n$ (the $n$th Riordan number as defined in Section~\ref{sec:permutations}) for all $n$.
\end{lem}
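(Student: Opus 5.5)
The natural route is Corollary~\ref{cor:refined_enumeration}, which applies because both $312$ and $321$ start with their largest letter. It gives
\[
\sum_{n\ge0}\sum_{\pi\in\S_n(\sigma)}t^{\er(\pi)}x^n=\frac{1}{1-(t-1)x}\,P_\sigma\!\left(\frac{x}{1-(t-1)x}\right),
\]
with $P_\sigma(x)=C(x)=\frac{1-\sqrt{1-4x}}{2x}=\frac{2}{1+\sqrt{1-4x}}$, since both patterns are counted by the Catalan numbers. So the whole proof reduces to substituting $y=x/(1-(t-1)x)$ into $C$ and simplifying.

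Write $u=1-(t-1)x$, so that $y=x/u$. Using the form $C(y)=2/(1+\sqrt{1-4y})$, we get
\[
\frac{1}{u}\,C\!\left(\frac{x}{u}\right)=\frac{2}{u+u\sqrt{1-4x/u}}=\frac{2}{u+\sqrt{u^2-4xu}} .
\]
The radicand factors as
\[
u^2-4xu=u(u-4x)=(1-(t-1)x)(1-(t+3)x),
\]
and this is exactly the claimed expression~\eqref{eq:GF312}.

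The only point needing care is the branch of the square root: $\sqrt{u^2}\sqrt{1-4x/u}=\sqrt{u(u-4x)}$. This holds as formal power series in $x$ with polynomial coefficients in $t$, because $u$ has constant term $1$ and every square root here is taken with constant term $1$. I expect no real obstacle beyond checking this.

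For the specialization, set $t=0$. Then $u=1+x$ and the generating function becomes $\frac{2}{1+x+\sqrt{(1+x)(1-3x)}}$. By~\eqref{eq:Riordan} this is the Riordan generating function, and since $\L_n(\sigma)$ consists exactly of the permutations with $\er(\pi)=0$, we get $|\L_n(\sigma)|=R_n$.
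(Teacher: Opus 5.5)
Your proof is correct, but it takes a genuinely different route from the paper's proof of this lemma.

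\textbf{Your route.} You obtain the lemma as a special case of Corollary~\ref{cor:refined_enumeration}. That is, you combine the lexicographic shelling of $\I_n^\sigma$, whose blockers are exactly the non-ephemeral records (Theorem~\ref{thm:shellability_start_with_k}), with the $f$-to-$h$ identity of Proposition~\ref{prop:GFh} and $P_\sigma=C$. None of these results depends on the lemma, so there is no circularity. Your algebra and your handling of the square-root branch (constant term $1$ throughout) are both right.

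\textbf{The paper's route.} The paper proves the lemma purely combinatorially, without topology. It sends $\S_n(312)$ (via Krattenthaler's bijection) and $\S_n(321)$ (via another standard bijection) to Dyck paths. Under these maps, records become peaks and ephemeral records become peaks not followed by an $E$ step. It then solves the functional equation $R=1+x\bigl(R+(t-1)+xR(1-t)\bigr)R$ arising from the first-return decomposition $D=ND_1ED_2$.

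\textbf{Comparison.} Your computation is essentially the paper's ``second derivation'' in Proposition~\ref{Riordan-as-Euler-char}. It is the same topology-to-enumeration mechanism the paper uses for Theorem~\ref{thm:4213enumeration}, and it is shorter and needs no bijection. The paper's route gives a proof independent of shellability. As a result, the two derivations in Proposition~\ref{Riordan-as-Euler-char} genuinely corroborate each other; with your proof of the lemma, the first derivation there would just repeat the second. It also gives an explicit Dyck-path model explaining why $\er$ is equidistributed on $\S_n(312)$ and $\S_n(321)$.
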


\begin{proof}
Let us first focus on $312$-avoiding permutations. We use a bijection between $\S_n(312)$ and the set $\D_n$ of Dyck paths of with $2n$ steps, which is due to Krattenthaler~\cite{Kratt}, up to trivial symmetries. To describe the path $D$ corresponding to $\pi\in \S_n(312)$, first draw the permutation diagram of $\pi$, which is the $n\times n$ grid where the $i$th column from the left has a cross in row $\pi_i$, with the convention that rows are numbered starting from the bottom. Assign coordinates to the vertices of the grid, so that the bottom-left corner is at the origin, and the top-right corner has coordinates $(n,n)$.
Let $D$ be the lattice path from $(0,0)$ to $(n,n)$ with steps $N=(0,1)$ and $E=(1,0)$ that passes above all the crosses but stays as close to the diagonal as possible; equivalently, the path whose peaks $NE$ occur precisely around the cells containing the left-to-right maxima of $\pi$. See the left of Figure~\ref{fig:312Dyck} for an example, where $\pi=34265187$.

\begin{figure}[h]
\centering
\begin{tikzpicture}[scale=0.5]
 \draw (0,0) grid (8,8);
 \draw[dotted] (0,0)--(8,8);
 \perm{{3,4,2,6,5,1,8,7}}
 \draw[red,ultra thick] (0,0) \N\N\N\E\N\E\E\N\N\E\E\E\N\N\E\E;
 \begin{scope}[shift={(12,0)}]
  \draw (0,0) grid (8,8);
 \draw[dotted] (0,0)--(8,8);
 \perm{{3,4,1,6,2,5,8,7}}
 \draw[red,ultra thick] (0,0) \N\N\N\E\N\E\E\N\N\E\E\E\N\N\E\E;
 \end{scope}
\end{tikzpicture}
\caption{Examples of the bijections between $312$-avoiding permutations and Dyck paths (left), and between $321$-avoiding permutations and Dyck paths (right).}
\label{fig:312Dyck}
\end{figure}

To describe the inverse map, let $D$ by a Dyck path, drawn as a path from $(0,0)$ to $(n,n)$ with steps $N$ and $E$ that stays weakly above the diagonal. Start by placing crosses in the cells inside the peaks $NE$ of the path. Next, for each column of the grid that does not have a cross yet, from left to right, place a cross as high as possible so that it is below $D$ and it is not in a row that already contains a cross. This produces the diagram of a permutation $\pi\in \S_n(312)$.

Via this bijection $\pi\mapsto D$, a record of $\pi$ is followed by a smaller entry if and only if the corresponding peak $NE$ is followed by an $E$ step. Thus, an ephemeral record corresponds to a peak $NE$ that is not followed by an $E$ step (i.e., it is either followed by a $N$ step or at the end of the path). Define $\er(D)$ to be the number of peaks with this property, and let
$$R(t,x)=\sum_{n=0}^\infty \sum_{\pi\in\S_n(312)} t^{\er(\pi)} x^n=\sum_{n=0}^\infty \sum_{D\in\D_n} t^{\er(D)} x^n$$
be the generating function that we want to compute.

We claim that $R(t,x)$ satisfies the equation
\begin{equation}\label{eq:R}
R(t,x)=1+x\left(R(t,x)+(t-1)+xR(t,x)(1-t)\right)R(t,x).
\end{equation}
To see this, recall that every nonempty Dyck path $D$ can be uniquely decomposed as $D=N D_1 E D_2$, where $D_1$ and $D_2$ are Dyck paths. 
The peaks $NE$ that are not followed by a $E$ in $D$ are those coming from either $D_1$ or $D_2$, with the following exceptions: if $D_1$ is empty, we get an extra such peak (this accounts for the correction term $t-1$); and if $D_1$ ends with a peak, this peak was not followed by a $E$ in $D_1$ but it is in $D$ (this accounts for the correction term $xR(t,x)(1-t)$).

Solving Equation~\eqref{eq:R}, we obtain 
$$R(t,x)=\frac{1-(t-1)x-\sqrt{1-2(t+1)x+(t^2+2t-3)x^2}}{2x(1-(t-1)x)},$$
which is equivalent to the stated formula.

Permutations in $\S_n(321)$ can be counted similarly, by using another well-known bijection between $\S_n(321)$ and Dyck paths, see e.g.~\cite{Elizalde}. To find the image of $\pi\in \S_n(321)$, we again draw the permutation diagram of $\pi$, and let $D$ be the lattice path from $(0,0)$ to $(n,n)$ with steps $N$ and $E$ that leaves all the crosses below but stays as close to the diagonal as possible. See the right of Figure~\ref{fig:312Dyck} for an example, where $\pi=34162587$.

The inverse map is now described as follows: given a Dyck path $D$, again start by placing crosses in the cells inside its peaks. Then, for each column of the grid that does not have a cross yet, from left to right, place a cross as {\em low} as possible so that it is not in a row that already contains a cross. This produces the diagram of a permutation $\pi\in \S_n(321)$.

As before, a record of $\pi$ is followed by a smaller entry if and only if the corresponding peak $NE$ is followed by an $E$ step. Therefore, $\er(\pi)=\er(D)$, and we get the same generating function $R(t,x)$.

Finally, setting $t=0$ in Equation~\eqref{eq:GF312}, and recalling that $\L_n(\sigma)$ is the set of $\sigma$-avoiding permutations with no ephemeral records, we obtain
$$\sum_{n=0}^\infty |\L_n(\sigma)|\,x^n= \frac{2}{1+x+\sqrt{(1+x)(1-3x)}},$$
which is the generating function from Equation~\eqref{eq:Riordan}.
\end{proof}

\begin{prop}\label{Riordan-as-Euler-char}
For any $\sigma\in\S_3$, we have
$$ \sum_{n=0}^\infty h(\I_n^{\sigma},t)\,x^n=\frac{2}{1-(1-t)x+\sqrt{(1-(1-t)x)(1-(1+3t)x)}}.$$
Additionally, the homology of $\I_n^\sigma$ is concentrated in top degree, and its rank is
$h_n(\I_n^{\sigma})=|\L_n(\sigma )|=(-1)^{n-1}\tilde\chi(\I_n^{\sigma})=R_n$ (the $n$th Riordan number) for all $n$.
\end{prop}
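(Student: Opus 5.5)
The plan is to reduce everything to the cases $\sigma\in\{312,321\}$ and then transfer to the other patterns by symmetry. By Corollary~\ref{cor:length3}, every $\I_n^\sigma$ with $\sigma\in\S_3$ is isomorphic to $\I_n^{321}$ or to $\I_n^{312}$. Isomorphic Boolean cell complexes have the same $f$-vectors, hence the same $h$-polynomials, homology and reduced Euler characteristic. One caveat: for patterns not starting with their largest letter, $\L_n(\sigma)$ is a different set, but it has the same cardinality. Indeed, for $\sigma\in\S_3$ the count $|\L_n(\sigma)|$ will be seen to equal $h_n(\I_n^\sigma)$, which is the top $h$-number of an isomorphic complex. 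Alternatively, $\L_n(\sigma)$ can be read as indexing homology facets of the transported shelling. So I interpret the equality $h_n=|\L_n(\sigma)|$ for these patterns through the isomorphism. Only the cases $\sigma=312,321$ need genuine work.

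For $\sigma\in\{312,321\}$, Theorem~\ref{thm:shellability_start_with_k} applies since $\sigma_1=3$. It gives
$$h(\I_n^\sigma,t)=\sum_{\pi\in\S_n(\sigma)}t^{n-\er(\pi)},\qquad\text{equivalently}\qquad \bh(\I_n^\sigma,t)=\sum_{\pi\in\S_n(\sigma)} t^{\er(\pi)}.$$
It also says that the homology is concentrated in top degree with rank $h_n=|\L_n(\sigma)|=(-1)^{n-1}\tilde\chi$. Summing over $n$, Lemma~\ref{lem:Riordan} gives the generating function of the reverse $h$-polynomials:
$$\sum_n \bh(\I_n^\sigma,t)x^n=\frac{2}{1-(t-1)x+\sqrt{(1-(t-1)x)(1-(t+3)x)}}.$$
Setting $t=0$ in that lemma yields $|\L_n(\sigma)|=R_n$.

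It remains to convert the generating function from $\bh$ to $h$. This uses the substitution $(t,x)\mapsto(1/t,tx)$ noted in the proof of Proposition~\ref{prop:GFh}, since $h(\Delta,t)=t^n\bh(\Delta,1/t)$. Under it, $(t-1)x$ becomes $(1-t)x$, and $(t+3)x$ becomes $(1+3t)x$. The expression therefore becomes exactly
$$\frac{2}{1-(1-t)x+\sqrt{(1-(1-t)x)(1-(1+3t)x)}}.$$
This step is routine; the only point needing care is that the branch of the square root is consistent, with constant term $1$.

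As a cross-check, I could instead derive the formula directly from Proposition~\ref{prop:GFh} with $P_\sigma=C$. This only requires simplifying $\frac{1}{1-(1-t)x}C\bigl(\frac{tx}{1-(1-t)x}\bigr)$. Doing so gives a second derivation valid for all $\sigma\in\S_3$ at once, with no need to invoke the isomorphisms. The main obstacle is really just this bookkeeping: the substitution and rationalization of the Catalan expression.
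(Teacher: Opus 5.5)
Your route matches the paper's proof. You reduce to $\sigma\in\{312,321\}$ by Corollary~\ref{cor:length3}, combine Theorem~\ref{thm:shellability_start_with_k} with Lemma~\ref{lem:Riordan}, apply $(t,x)\mapsto(1/t,tx)$, and cross-check via Proposition~\ref{prop:GFh} with $P_\sigma=C$. All of that is correct. This includes transferring the $h$-polynomial, the concentration of homology in top degree, and $h_n=(-1)^{n-1}\tilde\chi=R_n$ through the isomorphisms.

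The one step that fails is your claim that for the other four patterns, $\L_n(\sigma)$ ``is a different set, but it has the same cardinality.'' With the paper's definition of $\L_n(\sigma)$ this is already false for $n=3$. We have $\L_3=\{312,321\}$, and both permutations avoid each of $123,132,213,231$. So $|\L_3(\sigma)|=2$ for those $\sigma$, whereas $h_3(\I_3^\sigma)=R_3=1$.

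Your supporting sentence (``$|\L_n(\sigma)|$ will be seen to equal $h_n(\I_n^\sigma)$'') is circular, since nothing in your argument computes $|\L_n(\sigma)|$ for these patterns. Hence the equality $h_n(\I_n^\sigma)=|\L_n(\sigma)|$ holds literally only for $\sigma\in\{312,321\}$. For the other four it holds only under your fallback reading. There $\L_n(\sigma)$ is replaced by the image of $\L_n(\sigma')$ under the reversal/complement isomorphism, where $\sigma'\in\{312,321\}$ is the corresponding pattern; equivalently, it is the set of homology facets of the transported shelling.

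The paper's proof has the same imprecision, since it simply assumes $\sigma\in\{312,321\}$. You should drop the cardinality claim and either state the reinterpretation explicitly or restrict that particular equality to $312$ and $321$.
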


\begin{proof}
By Corollary~\ref{cor:length3}, the complexes $\I^\sigma_n$ for $\sigma\in\S_3$ are all isomorphic to either
$\I^{312}_n$ or $\I^{321}_n$, so in the rest of this proof we assume that  $\sigma\in\{312,321\}$.

We give two derivations of the generating function for the $h$-polynomials. The first one uses shellability and the interpretation of the $h$-numbers as counting facets by the number of blockers. By Theorem~\ref{thm:shellability_start_with_k}, we have 
$$h(\I_n^{\sigma},t)=\sum_{\pi\in\S_n(\sigma)} t^{n-\er(\pi)}\quad\text{and}\quad
\bh(\I_n^{\sigma},t)=\sum_{\pi\in\S_n(\sigma)} t^{\er(\pi)}.$$ 
Thus, Lemma~\ref{lem:Riordan} implies that
$$
\sum_{n=0}^\infty \bh(\I_n^{\sigma},t)\,x^n=\frac{2}{1-(t-1)x+\sqrt{(1-(t-1)x)(1-(t+3)x)}},
$$
and making the substitution $(t,x)\mapsto(1/t,tx)$ yields the stated generating function for $\sum_{n=0}^\infty h(\I_n^{\sigma},t)\,x^n$.

The second proof is based on the relationship between $h$-numbers and $f$-numbers. 
Using Proposition~\ref{prop:GFh} and the fact that $P_{\sigma}(x)=C(x)$, we get 
\[
    \sum_{n=0}^\infty h(\I_n^{\sigma},t)\,x^n
=\frac{1}{1-(1-t)x}\,C\left(\frac{tx}{1-(1-t)x}\right)
=\frac{1-\sqrt{\frac{1-(1+3t)x}{1-(1-t)x}}}{2tx},
\]
which is equivalent to the stated expression. Additionally, 
$$\sum_{n=0}^\infty h_n(\I_n^{\sigma})\,x^n=
\frac{1}{1+x}\,C\left(\frac{x}{1+x}\right)=\frac{2}{1+x+\sqrt{(1+x)(1-3x)}},$$
which is the generating function for the Riordan numbers given in Equation~\eqref{eq:Riordan}.
\end{proof}

\section{Patterns of length 4}\label{sec:length4}
Corollary~\ref{cor:extremal-endpoint} implies that the complex $\I_n^\sigma$ is shellable for all patterns $\sigma\in\S_4$ except 
$3412,  2143, 3142, 2413$.
In this section we handle the pattern $3412$ and its reversal $2143$. 

\subsection{The pattern $3412$} \label{sec:3412}
Our goal is to prove that the complex $\I_n^{3412}$ is shellable with respect to the lexicographic order.
We start by giving some sufficient conditions for $3412$-blockers.

\begin{lem}\label{lem:3412blockers-sufficient}
    Let $\pi\in\S_n(3412)$. If the entry $\pi_j$ satisfies any of the following four conditions, then it is a $3412$-blocker of $\pi$ with respect to $\lex$:
    \begin{enumerate}[(i)]
        \item $\pi_j$ is a 1 in some occurrence of $231$ in $\pi$,
        \item $\pi_j$ is a 3 in some occurrence of $312$ in $\pi$,
        \item $j<n$ and $\pi_j>\pi_{j+1}$,
        \item $j>1$ and $\pi_{j-1}>\pi_{j}$.
    \end{enumerate}
\end{lem}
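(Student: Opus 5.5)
For each of the four conditions I will exhibit an explicit move of $\pi_j$ to a new position giving a permutation $\pi'\lex\pi$, and then check that $\pi'$ avoids $3412$. Since $\pi$ minus $\pi_j$ avoids $3412$, any occurrence of $3412$ in $\pi'$ must use the moved letter. So the whole argument reduces to ruling out occurrences that contain $\pi_j$ in its new position. I will choose moves that are as local as possible, so that the relative order of $\pi_j$ with almost every other letter is unchanged.

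For (iii), I will swap $\pi_j$ and $\pi_{j+1}$, which clearly decreases lexicographically. An occurrence in $\pi'$ must use both $\pi_{j+1}$ and $\pi_j$, now adjacent as an ascent $\pi_{j+1}\pi_j$. In $3412$ the adjacent ascents are $34$ and $12$. Reversing the swap turns such an occurrence into a $4312$ or a $3421$ pattern in $\pi$, which is allowed. So this naive swap fails, and a further case argument is needed.

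For (iv), I will swap $\pi_{j-1}$ and $\pi_j$. In this case $\pi_j$ moves left past a larger letter, so $\pi'\lex\pi$.

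For (i), with $\pi_a\pi_b\pi_j$ forming a $231$ and $a<b<j$, I will move $\pi_j$ to just before the leftmost entry of $\pi$ that exceeds it, as in Lemma~\ref{lem:blockers}. The goal is to show that any resulting $3412$ can be transported back into $\pi$.

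For (ii), with $\pi_j\pi_b\pi_c$ forming a $312$, I will move $\pi_j$ rightward to just after a suitably chosen smaller entry. The natural choice is the first entry after position $j$ that is smaller than $\pi_j$, which gives $\pi'\lex\pi$.

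The main obstacle is that $3412$ is not extremal-endpoint. A newly created occurrence can use the moved letter in any of the four roles, and the trick of replacing the moved letter by a neighbouring larger one, as in Lemma~\ref{lem:blockers}, is not automatically available. I expect to handle this by exploiting the existing $231$ or $312$ witness, or the adjacent descent. For (i), for instance, I would move $\pi_j$ to immediately before or after $\pi_b$ rather than the leftmost larger entry. Any new occurrence would then force, together with $\pi_a$ or $\pi_b$, a $3412$ already present in $\pi$. If a single position fails, I will try the two candidate positions adjacent to the witness entry and show that they cannot both create $3412$, mirroring the two-position argument in Proposition~\ref{prop:second-pattern-group}. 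For (iii) and (iv) I will likewise allow moving $\pi_j$ to one of two nearby slots and argue by contradiction that both contain $3412$. That case analysis on the role of $\pi_j$ ($3$, $4$, $1$ or $2$) is where the real work lies.
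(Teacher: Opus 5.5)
Your proposal has genuine gaps. The concrete moves you commit to for (i) and (ii) do not work, and the two ideas that make the proof go through are missing.

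\textbf{Conditions (i) and (ii).} For (i), moving $\pi_j$ to just before the leftmost entry exceeding it fails. Take $\pi=61275\in\S_5(3412)$ and $\pi_j=5$, which is the 1 of the $231$ occurrence $675$. The move yields $56127$, which contains $5612$.

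Your fallback is to place $\pi_j$ immediately before or after $\pi_b$ for an arbitrary witness $\pi_a\pi_b\pi_j$. This also fails, and in both slots at once. For $\pi=356421$, $\pi_j=1$ and witness $341$, both $356142$ and $356412$ contain $5612$. So the claim that the two slots cannot both fail is false as stated.

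The missing idea is an extremal choice of witness. Take the $231$ occurrence $\pi_{i_1}\pi_{i_2}\pi_j$ minimizing $i_2$, and move $\pi_j$ immediately before $\pi_{i_2}$. This is lexicographically smaller since $\pi_j<\pi_{i_2}$. Now check the four roles $\pi_j$ could play in a new $3412$:
\begin{enumerate}
\item[--] It cannot be the 1 or 2: two increasing entries exceeding $\pi_j$ to the left of position $i_2$ would give a $231$ with smaller middle index.
\item[--] It cannot be the 4: replace it by $\pi_{i_2}$ to get a $3412$ in $\pi$.
\item[--] It cannot be the 3: then $\pi_{i_1}\pi_{i_2}$, followed by the 1 and 2 of that occurrence, is a $3412$ in $\pi$.
\end{enumerate}

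For (ii), a $312$ with $\pi_j$ as the 3 forces $\pi_{j+1}<\pi_j$ in a $3412$-avoider. Otherwise $\pi_j\pi_{j+1}$ followed by the 1 and 2 of the witness is a $3412$. Hence your ``first smaller entry to the right'' is always $\pi_{j+1}$, and your move is just the adjacent swap. That swap can fail: $\pi=4312$ becomes $3412$. The correct move mirrors (i): take the $312$ occurrence $\pi_j\pi_{i_3}\pi_{i_4}$ maximizing $i_3$, and put $\pi_j$ immediately after $\pi_{i_3}$.

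\textbf{Conditions (iii) and (iv).} Here you stop one step short. You correctly observed that if the swap in (iii) creates a $3412$, then $\pi$ contains either a $4312$ of the form $\pi_j\pi_{j+1}\pi_c\pi_d$ or a $3421$ of the form $\pi_a\pi_b\pi_j\pi_{j+1}$. For (iv), analogously, $\pi_{j-1}\pi_j$ plays the role of $43$ or $21$. Deleting the other swapped letter gives either a $312$ in which $\pi_j$ is the 3, or a $231$ in which $\pi_j$ is the 1. So $\pi_j$ is a blocker by (ii) or (i).

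That is the whole argument: once (i) and (ii) are proved, (iii) and (iv) follow immediately. The unspecified ``one of two nearby slots'' case analysis you propose is neither needed nor justified.
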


\begin{proof}
    Suppose first that (i) is satisfied.  Let $\pi_{i_1}\pi_{i_2}\pi_j$ be an occurrence of $231$ that minimizes $i_2$. Note that $i_1<i_2<j$ and $\pi_j <\pi_{i_1} < \pi_{i_2}$. Since $\pi_j<\pi_{i_2}$, moving $\pi_j$ to position $i_2$ creates a lexicographically smaller permutation. Let us show that it avoids $3412$, by arguing that $\pi_j$ cannot be any of the elements of an occurrence of this pattern:
    \begin{itemize}
        \item If $\pi_j$ was the 3 of such an occurrence, say $\pi_j\pi_a\pi_b\pi_c$, then the fact that 
        $\pi_b < \pi_c < \pi_j < \pi_a$, combined with $\pi_j < \pi_{i_1} < \pi_{i_2}$, implies that $\pi_b<\pi_c<\pi_j<\pi_{i_1}$. But then $\pi_{i_1}\pi_{i_2}\pi_b\pi_c$ would have been an occurrence of $3412$ in $\pi$, since $i_1 < i_2 < j < b < c$. 
        \item If $\pi_j$ was the $4$ of such an occurrence, then replacing it with $\pi_{i_2}$ would give an occurrence of $3412$ in $\pi$.
        \item By the choice of $i_2$, there are no two increasing entries to the left of position $i_2$ that are larger than $\pi_j$, so $\pi_j$ cannot be the 1 or the 2 of such an occurrence.
    \end{itemize}

    Similarly, if (ii) is satisfied, let $\pi_j\pi_{i_3}\pi_{i_4}$ be an occurrence of $312$ that maximizes $i_3$. A symmetric argument shows that moving $\pi_j$ to position $i_3$ produces a permutation that avoids $3412$. Additionally, this permutation is lexicographically smaller than $\pi$ because $\pi_j>\pi_{j+1}$. Indeed, if we had $\pi_j<\pi_{j+1}$, then $\pi_j\pi_{j+1}\pi_{i_3}\pi_{i_4}$ would have been an occurrence of $3412$ in~$\pi$.  

    Suppose now that (iii) is satisfied. If swapping $\pi_j$ and $\pi_{j+1}$ produces a $3412$-avoiding permutation, then we are done. So we can assume that this swap creates an occurrence of $3412$, which necessarily uses both entries $\pi_{j+1}$ and $\pi_j$. 
    If $\pi_{j+1}\pi_j$ form the $12$ of such an occurrence, say $\pi_a\pi_b\pi_{j+1}\pi_j$, then $\pi_a\pi_b\pi_j$ is an occurrence of $231$ in $\pi$, so condition (i) is satisfied.
    If $\pi_{j+1}\pi_j$ form the $34$ of such an occurrence, say $\pi_{j+1}\pi_j\pi_c\pi_d$, then $\pi_j\pi_c\pi_d$ is an occurrence of $312$ in $\pi$, so condition (ii) is satisfied.

    Finally, suppose that (iv) is satisfied. If swapping $\pi_{j-1}$ and $\pi_{j}$ produces a $3412$-avoiding permutation, then we are done. So we can assume that this swap creates an occurrence of $3412$, which necessarily uses both entries $\pi_{j}$ and $\pi_{j-1}$. 
    If $\pi_{j}\pi_{j-1}$ form the $12$ of such an occurrence, then condition (i) is satisfied. If they form the $34$ of such an occurrence, then condition (ii) is satisfied.
\end{proof}

Lemma~\ref{lem:3412blockers-sufficient} can be rephrased as follows.

\begin{lem}\label{lem:3412nonblockers} 
Let $\pi\in\S_n(3412)$. Any entry $\pi_j$ that is not a $3412$-blocker of $\pi$ with respect to $\lex$ must satisfy all of these conditions:
    \begin{enumerate}[(i)]
        \item the entries to the left of $\pi_j$ that are larger than $\pi_j$ form a decreasing sequence,
        \item the entries to the right of $\pi_j$ that are smaller than $\pi_j$ form a decreasing sequence,
        \item $j=n$ or $\pi_j<\pi_{j+1}$,
        \item $j=1$ or $\pi_{j-1}<\pi_{j}$.
    \end{enumerate}
Additionally, if $\pi_i$ is one of the entries in (i) (i.e. $i<j$ and $\pi_i>\pi_j$), then moving $\pi_j$ to position $i$ creates an occurrence of $3412$ where $\pi_j$ plays the role of $3$.
\end{lem}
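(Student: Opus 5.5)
The first part of the statement is the contrapositive of Lemma~\ref{lem:3412blockers-sufficient}, and I would read off each condition in turn. Suppose $\pi_j$ is not a $3412$-blocker. Then conditions (iii) and (iv) of Lemma~\ref{lem:3412blockers-sufficient} fail, which gives (iii) and (iv) here directly. For (i), suppose the entries to the left of $\pi_j$ that exceed $\pi_j$ were not decreasing. Then there would be $i_1<i_2<j$ with $\pi_j<\pi_{i_1}<\pi_{i_2}$, so $\pi_j$ would be the $1$ of an occurrence of $231$, contradicting condition (i) of the earlier lemma. Symmetrically, suppose two entries to the right of $\pi_j$ that are smaller than $\pi_j$ appeared in increasing order. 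Then $\pi_j$ would be the $3$ of an occurrence of $312$, contradicting condition (ii). This gives (ii).

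For the additional claim, fix $i<j$ with $\pi_i>\pi_j$, and let $\pi'$ be obtained by moving $\pi_j$ to position $i$. Since $\pi_j<\pi_i$, we have $\pi'\lex\pi$. Because $\pi_j$ is not a blocker, $\pi'$ must contain $3412$, and any such occurrence uses the moved letter $\pi_j$, since deleting it from $\pi'$ gives a subword of $\pi$. It remains to rule out that $\pi_j$ plays the role of $4$, $1$, or $2$.

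If $\pi_j$ is the $4$, the occurrence is $\pi_a\pi_j\pi_b\pi_c$ in $\pi'$ with $a<i$ and $b,c$ after position $i$. Consider first the case where $b,c$ lie to the right of $j$ in $\pi$. Then replacing $\pi_j$ by $\pi_i$ preserves the pattern, because $\pi_i>\pi_j>\pi_a$, and $\pi$ contains $3412$, a contradiction. Now suppose $\pi_b$ or $\pi_c$ lies between positions $i$ and $j$ in $\pi$. These letters are smaller than $\pi_a<\pi_j$, so they are small entries occurring before $\pi_j$. We could try again to swap in $\pi_i$ for $\pi_j$. This works whenever $b$ comes after $i$ in $\pi$, and here $b$ is after $i$ because it follows position $i$ in $\pi'$. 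So in every case $\pi_i\,$ takes the role of $4$ in $\pi_a\pi_i\pi_b\pi_c$, giving $3412$ in $\pi$, a contradiction.

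If $\pi_j$ is the $1$ or the $2$, the occurrence contains two entries before position $i$ in $\pi'$, say $\pi_a<\pi_b$ with $a<b<i$, both larger than $\pi_j$. These are two increasing entries to the left of $\pi_j$ in $\pi$ that exceed $\pi_j$, which contradicts (i). Hence $\pi_j$ plays the role of $3$.

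The main subtlety is the $4$ case, where one must check that the substitution $\pi_j\to\pi_i$ respects positions. This works because every letter after position $i$ in $\pi'$ also comes after $i$ in $\pi$, and $\pi_i$ sits at position $i$ of $\pi$.
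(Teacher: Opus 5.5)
Your proof is correct and follows the same route as the paper: (i)--(iv) are the contrapositive of Lemma~\ref{lem:3412blockers-sufficient}, and for the additional claim you rule out $\pi_j$ being the $1$ or $2$ (which would yield a $231$ in $\pi$ with $\pi_j$ as the $1$) and being the $4$ (substitute $\pi_i$ for $\pi_j$), exactly as the paper does. The case split in your $4$ case is unnecessary, and strictly speaking the letter in position $i+1$ of $\pi'$ is $\pi_i$ itself, which sits at position $i$ of $\pi$ rather than after it; this is harmless because $\pi_i>\pi_j$ cannot play the role of $1$ or $2$.
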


\begin{proof}
    The contrapositive of Lemma~\ref{lem:3412blockers-sufficient} states that any entry $\pi_j$ that is not a $3412$-blocker must satisfy (i)--(iv). Let us prove the additional statement. Since $\pi_j$ is not a $3412$-blocker and moving $\pi_j$ to position $i$ produces a lexicographically smaller permutation, this move must create an occurrence of $3412$. Let us argue that $\pi_j$ must be the 3 of such an occurrence. Clearly, $\pi_j$ cannot be the $1$ or the $2$, because then $\pi_j$ would be the $1$ in an occurrence of $231$ in $\pi$, contradicting Lemma~\ref{lem:3412blockers-sufficient}(i). And if $\pi_j$ was the $4$ of such an occurrence, then replacing it with $\pi_i$ would give an occurrence of $3412$ in $\pi$, which is a contradiction.
\end{proof}

\begin{thm}\label{thm:3412}
    The complex $\I_n^{3412}$ is pure of dimension $n-1$ and is shellable with respect to~$\lex$.
\end{thm}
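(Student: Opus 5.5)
Purity of dimension $n-1$ is already given by Corollary~\ref{cor:three-and-four-pure}, so the facets of $\I_n^{3412}$ are exactly the permutations in $\S_n(3412)$. For shellability I will argue directly with the combinatorial definition, in the style of the proof of Theorem~\ref{thm:shelling-via-unrestricted-blockers}. Theorem~\ref{thm:shelling-via-unrestricted-blockers} itself cannot be invoked, because $3412$-blockers need not coincide with unrestricted blockers. It suffices to show the following. Let $\pi\in\S_n(3412)$ and let $S$ be a nonempty set of entries of $\pi$, none of which is a $3412$-blocker. Then the face $\tau$ obtained from $\pi$ by deleting $S$ is not contained in any $3412$-avoiding $\pi'\lex\pi$. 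Granting this, every face of $\overline{\pi}$ shared with an earlier facet omits at least one blocker, so it lies in a codimension-one face $\pi\setminus\{b\}$ with $b$ a blocker, which is shared with an earlier facet by definition. This is exactly the shelling condition.

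Suppose such a $\pi'$ exists. Let $i$ be the first position where $\pi$ and $\pi'$ differ, and set $x=\pi'_i<\pi_i$. The entries of $T=[n]\setminus S$ occur in $\pi'$ in the same order as in $\pi$. I will split into two cases according to whether $x\in T$ or $x\in S$.

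\emph{Case $x\in T$.} Say $x=\pi_j$ with $j>i$. Every entry among $\pi_i,\dots,\pi_{j-1}$ precedes $x$ in $\pi$ but follows it in $\pi'$. Since the prefixes agree up to position $i-1$, all of $\pi_i,\dots,\pi_{j-1}$ must lie in $S$. By Lemma~\ref{lem:3412nonblockers}(iii), each non-blocker is smaller than the entry immediately to its right. Hence $\pi_i<\pi_{i+1}<\dots<\pi_{j-1}<\pi_j=x$, contradicting $x<\pi_i$.

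\emph{Case $x\in S$.} Say $x=\pi_j$ with $j>i$, so $x$ has the larger entry $\pi_i$ to its left. By Lemma~\ref{lem:3412nonblockers}(i), the entries to the left of $x$ that are larger than $x$ form a decreasing sequence. The additional statement of Lemma~\ref{lem:3412nonblockers} says that moving $x$ to position $i$ creates an occurrence $x\,a\,b\,c$ of $3412$ with $x$ playing the role of $3$. I would then show that this occurrence survives in $\pi'$, giving the desired contradiction. The point is that $a,b,c$ lie to the right of position $i$. I would first show they can be chosen in $T$, or at least in the same relative order in $\pi'$. For this I would use conditions (ii)--(iv) of Lemma~\ref{lem:3412nonblockers}: a non-blocker in the role of $1$ or $2$ would be the $1$ of a $231$, which is forbidden, and a non-blocker in the role of $4$ can be replaced by a larger $T$-neighbour via (iii)/(iv).

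I expect this last step to be the main obstacle. Entries of $S$ other than $x$ may also be relocated in $\pi'$, so an occurrence in the intermediate permutation need not persist in $\pi'$. To handle it, I would take $x$ to be the leftmost-relocated element of $S$ and use minimality of $i$. If the direct argument becomes unwieldy, my fallback is Lemma~\ref{lem:more-general-version}: take the blockers of Lemma~\ref{lem:3412blockers-sufficient} as the known blockers, and compare their distribution with the $h$-vector obtained from Proposition~\ref{prop:GFh} using the known generating function for $\S_n(3412)$.
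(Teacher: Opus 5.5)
\textbf{There is a genuine gap, in the case $x\in S$.} Your reduction, your purity citation (Corollary~\ref{cor:three-and-four-pure} is the correct reference), and your case $x\in T$ are all correct and match the paper. The case $x\in S$ is the heart of the theorem, and you only outline it. The justification you outline also fails for an arbitrary occurrence.

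Take $\pi=513462$, whose non-blockers are $3$ and $4$, with $x=4$ and $i=1$. Moving $x$ to position $i$ gives $\tilde\pi=451362$, which contains $4\,5\,1\,3$. In it the entry $3$ plays the role of $2$, yet $3$ is a non-blocker and is not the $1$ of any $231$ in $\pi$. This occurrence is destroyed in $\pi'=435162$, which is obtained by moving $3$ and $4$; there $4\,5\,1\,2$ survives instead. So the occurrence has to be chosen, and you are missing how to choose it.

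The paper's choice works as follows.
\begin{enumerate}
\item Use $\pi_i$ itself as the $4$. Since $\pi_i$ immediately follows $x$ in $\tilde\pi$, $x\,\pi_i\,\pi_c\,\pi_d$ is an occurrence whenever $x\,a\,\pi_c\,\pi_d$ is.
\item $\pi_i$ is a blocker. Lemma~\ref{lem:3412nonblockers}(iv) for $x=\pi_j$ gives $\pi_{j-1}<\pi_j$, hence $j\ge i+2$. So $\pi_i\,\pi_{j-1}\,\pi_j$ is a $312$.
\item Among such occurrences, take $\pi_d$ minimal, then $\pi_c$ minimal.
\item $\pi_c$ is a blocker. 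Its left neighbour exceeds it, since otherwise that neighbour would be a smaller choice for the $1$; if the neighbour is $\pi_i$ or $x$, this is automatic. Then apply Lemma~\ref{lem:3412blockers-sufficient}(iv).
\item $\pi_d$ is a blocker. Otherwise the last statement of Lemma~\ref{lem:3412nonblockers}, applied with the larger entry $\pi_i$ to its left, gives an occurrence $\pi_d\,\pi_e\,\pi_f\,\pi_g$. Then $x\,\pi_i\,\pi_f\,\pi_g$ has a smaller $2$, contradicting minimality.
\end{enumerate}
Hence $\pi_i,\pi_c,\pi_d$ all lie in $T$. They keep their relative order in $\pi'$ and sit to the right of $\pi'_i=x$, so $\pi'$ contains $3412$.

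\textbf{Your fallback cannot rescue this.} The conditions of Lemma~\ref{lem:3412blockers-sufficient} do not detect all blockers. In $\pi=512463$ the entry $2$ satisfies none of (i)--(iv). Yet moving it to the front gives $251463\lex\pi$, which avoids $3412$, so $2$ is a blocker. Since the lexicographic order is in fact a shelling, its blocker vector equals the $h$-vector. Known blockers form a subset of the blockers, strictly so for this $\pi$. So for $n=6$ the known-blocker vector differs from $(h_0,\dots,h_6)$, and Lemma~\ref{lem:more-general-version} does not apply.
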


\begin{proof}
    The fact that $\I_n^{3412}$ is pure of dimension $n-1$ is proven in Proposition~\ref{leftmost-pure}.
    Recall from Definition~\ref{def:blocker} that a $3412$-blocker of $\pi\in\S_n(3412)$ with respect to $\lex$ is an entry $\pi_j$ that can be moved within $\pi$ to obtain a lexicographically smaller $3412$-avoiding permutation. In this proof, such an entry will be simply called a {\em blocker}, and the other entries of $\pi$ will be called {\em non-blockers}.
    
    To prove the theorem, we will show that, for any given $\pi\in\S_n(3412)$, it is not possible to move any set of non-blockers within $\pi$ to obtain a lexicographically smaller $3412$-avoiding permutation.
    
    Suppose for the sake of contradiction that such a set of non-blockers existed, call this set $S$, and let 
    $\pi'\in\S_n(3412)$ be the resulting permutation, which satisfies $\pi'\lex\pi$. Let $i$ the leftmost position where $\pi$ and $\pi'$ disagree, and note that $\pi'_i<\pi_i$. Let $j$ be the position where $\pi'_i$ appears in $\pi$, that is, $\pi_j=\pi'_i$, and note that $j>i$.

    Suppose first that $\pi_j\notin S$. Then $\pi_i,\pi_{i+1},\dots,\pi_{j-1}\in S$, since the relative position of each of these entries with respect to $\pi_j$ is different in $\pi'$ and $\pi$. But since elements of $S$ are non-blockers, Lemma~\ref{lem:3412nonblockers}(iii) implies that $\pi_i<\pi_{i+1}<\dots<\pi_{j-1}<\pi_j$, contradicting the fact that $\pi_j<\pi_i$. We deduce that $\pi_j\in S$, and in particular $\pi_j$ is a non-blocker.

    Consider now the permutation $\tilde{\pi}=\pi_1\pi_2\dots\pi_{i-1}\pi_j\pi_i\pi_{i+1}\pi_{i+2}\dots\widehat{\pi_j}\dots\pi_n$, obtained from $\pi$ by moving $\pi_j$ to position $i$. 
    Since $\pi_j$ is a non-blocker, this move creates an occurrence of $3412$ where $\pi_j$ plays the role of 3, by Lemma~\ref{lem:3412nonblockers}. If we denote such an occurrence by $\pi_j\pi_b\pi_c\pi_d$, then $\pi_j\pi_i\pi_c\pi_d$ is also an occurrence of $3412$ in $\tilde\pi$. Let us focus on occurrences of $3412$ of the form $\pi_j\pi_i\pi_c\pi_d$. If there is more than one, take one that minimizes the value of $\pi_d$, and among these (if there is still more than one), take the one that minimizes the value of $\pi_c$. We will show that the entries $\pi_i$, $\pi_c$ and $\pi_d$ of this occurrence are all blockers of $\pi$. 

    First we show that $\pi_i$ is a blocker. Since $\pi_j$ is a non-blocker of $\pi$, Lemma~\ref{lem:3412nonblockers}(iv) implies that $\pi_{j-1}<\pi_j$. Thus, $\pi_i\pi_{j-1}\pi_j$ is an occurrence of $312$ in $\pi$, and Lemma~\ref{lem:3412blockers-sufficient}(ii) implies that $\pi_i$ is a blocker of $\pi$.

    Next we show that $\pi_c$ is a blocker. If $\pi_{c-1}<\pi_c$, then $\pi_j\pi_i\pi_{c-1}\pi_d$ would be an occurrence of $3412$ in $\tilde\pi$ with a smaller value playing the role of $1$, contradicting the choice of $\pi_c$. Thus, $\pi_{c-1}>\pi_c$. By Lemma~\ref{lem:3412blockers-sufficient}(iv), this implies that $\pi_c$ is a blocker of $\pi$.

    It remains to show that $\pi_d$ is a blocker of $\pi$. Suppose that this is not the case. Then, by Lemma~\ref{lem:3412nonblockers}, the permutation $\pi_1\pi_2\dots\pi_{i-1}\pi_d\pi_i\pi_{i+1}\pi_{i+2}\dots\widehat{\pi_d}\dots\pi_n$, obtained from $\pi$ by moving $\pi_d$ to position $i$ would contain a $3412$ where $\pi_d$ plays the role of~3. Let $\pi_d\pi_e\pi_f\pi_g$ be such an occurrence. Then $\pi_f<\pi_g<\pi_d$, so
    $\pi_j\pi_i\pi_f\pi_g$ would be an occurrence of $3412$ in $\tilde\pi$ where the value of the 2 would be smaller than $\pi_d$, contradicting the choice of $\pi_d$.

    This proves that $\pi_i$, $\pi_c$ and $\pi_d$ are all blockers of $\pi$. It follows that the relative order of the entries $\pi_j\pi_i\pi_c\pi_d$ is the same in $\pi'$ as in $\tilde\pi$. In particular, these entries form an occurrence of the pattern $3412$ in $\pi'$, which is a contradiction.   
\end{proof}

We can now combine our results to show shellability for most patterns of length~4.

\begin{prop}\label{cor:shellable-length4}
For every pattern $\sigma\in \S_4\setminus\{3142, 2413\}$, the complex $\I_n^{\sigma}$ is shellable.
\end{prop}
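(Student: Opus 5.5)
The plan is a case split on $\sigma\in\S_4\setminus\{3142,2413\}$ according to whether $\sigma$ is an extremal-endpoint pattern. Every such $\sigma$ is covered either by Corollary~\ref{cor:extremal-endpoint} or by one of the two non-extremal-endpoint patterns $3412$ and $2143$, so no new shelling argument is needed.

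First, enumerate the patterns of length $4$ that are not extremal-endpoint. These are the $\sigma\in\S_4$ with $\{\sigma_1,\sigma_4\}\cap\{1,4\}=\emptyset$, so $\{\sigma_1,\sigma_4\}=\{2,3\}$ and the middle two entries are $1$ and $4$ in some order. This leaves exactly the four patterns $2143$, $2413$, $3142$ and $3412$, matching the list at the start of Section~\ref{sec:length4}. For every other $\sigma\in\S_4$, Corollary~\ref{cor:extremal-endpoint} gives shellability of $\I_n^{\sigma}$ directly.

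Second, handle $\sigma=3412$ by Theorem~\ref{thm:3412}, which shows that $\I_n^{3412}$ is shellable with respect to $\lex$.

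Third, handle $\sigma=2143$ by symmetry. Its reversal is $(2143)^R=3412$, so Proposition~\ref{isomorph-one} gives an isomorphism $\I_n^{2143}\cong\I_n^{3412}$ via $\pi\mapsto\pi^R$. Shellability is preserved under isomorphism of Boolean cell complexes, so $\I_n^{2143}$ is shellable. Concretely, the lexicographic order on the reversals of the facets is a shelling order.

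There is no real obstacle here; the substantive work was done in Theorem~\ref{thm:3412}. The only point needing care is the enumeration in the first step, and one can cross-check it against Corollary~\ref{cor:three-and-four-pure}, which singles out the same four patterns.
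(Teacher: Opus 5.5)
Your proposal is correct and follows the paper's proof essentially verbatim. Extremal-endpoint patterns are handled by Corollary~\ref{cor:extremal-endpoint}; the only non-extremal-endpoint patterns besides $3142$ and $2413$ are $3412$ and $2143$; the former is Theorem~\ref{thm:3412}, and the latter follows from it via the reversal isomorphism of Proposition~\ref{isomorph-one}.
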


\begin{proof}
If $\sigma\in\S_4$ is an extremal-endpoint pattern, shellability of $\I_n^{\sigma}$ follows from Corollary~\ref{cor:extremal-endpoint}.
Any $\sigma\in\S_4$ that is not an extremal-endpoint patterns must start and end with a $2$ or a $3$, that is, $\sigma\in\{3412,  2143, 3142, 2413\}$.
Shellability of $\I_n^{3412}$ is proved in Theorem~\ref{thm:3412}, and 
shellability of $\I_n^{2143}$ follows from Proposition~\ref{isomorph-one}.
\end{proof}

For the pattern $3142$ (and its reversal $2413$), shellability of the corresponding pattern-avoiding complex is an open question.

\subsection{Refined enumeration of permutations avoiding patterns of length~4}
\label{sec:h-length4}

In this section we apply Corollary~\ref{cor:refined_enumeration} to obtain enumerative results for permutations avoiding patterns $\sigma\in\S_4$ with $\sigma_1=4$. We use shellability of $\I^\sigma_n$ to deduce that the number of $\sigma$-avoiding permutations with no ephemeral records is equal (up to sign) to the reduced Euler characteristic of this complex and, more generally, to obtain a bivariate generating function counting $\sigma$-avoiding permutations according to the size and the number of ephemeral records.

It is known~\cite{Stankova,StankovaWest,BWX} that the 24 patterns in $\S_4$ fall into three Wilf-equivalence classes:
\begin{enumerate}[(i)]
\item $1342,2431,3124,4213,2413,3142,2314,4132,1423,3241$ (enumerated by B\'ona~\cite{Bona}),
\item $1234,4321,1243,3421,4312,2134,1432,2341,3214,4123,2143,3412$ (enumerated by Gessel~\cite{Gessel}),
\item $1324,4231$ (no known enumeration formula).
\end{enumerate}

For patterns $\sigma$ in class (i), it was shown by B\'ona~\cite{Bona} that 
\begin{equation}\label{eq:bona}
P_{\sigma}(x)=\frac{32x}{1+20x-8x^2-(1-8x)^{3/2}}.
\end{equation}
Applying Corollary~\ref{cor:refined_enumeration} to the patterns $4213$ and $4132$, which are in class (i), we obtain the following refined enumeration.

\begin{thm}\label{thm:4213enumeration}
    For $\sigma\in\{4213,4132\}$, we have
\begin{multline*}
    \sum_{n\ge0}\sum_{\pi\in\S_n(\sigma)}t^{\er(\pi)}x^n
=    \sum_{n=0}^\infty \bh(\I_n^{\sigma},t)\,x^n\\
=\frac{32x}{1-2(t-11)x+(t^2-22t+13)x^2-\sqrt{(1-(t-1)x)(1-(t+7)x)^3}}.
\end{multline*}
In particular, setting $t=0$,
\[
    \sum_{n=0}^\infty \card{\L_n(\sigma)}\,x^n=
    \sum_{n=0}^\infty h_n(\I_n^{\sigma})\,x^n
=\frac{32x}{1+22x+13x^2-\sqrt{(1-x)(1-7x)^3}}.
\]
\end{thm}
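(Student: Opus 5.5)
The plan is to apply Corollary~\ref{cor:refined_enumeration} directly. Both $4213$ and $4132$ start with their largest letter, so the corollary gives
\[
\sum_{n\ge0}\sum_{\pi\in\S_n(\sigma)}t^{\er(\pi)}x^n=\sum_{n\ge0}\bh(\I_n^\sigma,t)\,x^n=\frac{1}{1-(t-1)x}\,P_\sigma\!\left(\frac{x}{1-(t-1)x}\right).
\]
Both patterns lie in Wilf class (i), so $P_\sigma$ is B\'ona's function~\eqref{eq:bona}. The rest is a substitution and simplification.

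Set $u=1-(t-1)x$ and $y=x/u$. Then
\[
\frac1u P_\sigma(y)=\frac{32x/u^2}{1+20y-8y^2-(1-8y)^{3/2}}.
\]
Multiplying numerator and denominator by $u^2$ gives the new denominator
\[
u^2+20xu-8x^2-u^{1/2}(u-8x)^{3/2}.
\]
Now $u-8x=1-(t+7)x$, so the radical term is $\sqrt{(1-(t-1)x)(1-(t+7)x)^3}$. The polynomial part expands as
\[
(1-(t-1)x)^2+20x(1-(t-1)x)-8x^2 .
\]
Its constant term is $1$. Its coefficient of $x$ is $-2(t-1)+20=-2(t-11)$. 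Its coefficient of $x^2$ is $(t-1)^2-20(t-1)-8=t^2-22t+13$. This matches the stated formula.

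The only step needing care is choosing the branch of the square root. It is fixed by checking that the expression is a power series with constant term~$1$. As a spot check, at $x\to0$ the denominator behaves like $32x$: the expansion of $\sqrt{(1-(t-1)x)(1-(t+7)x)^3}$ has linear coefficient $-\tfrac12\bigl((t-1)+3(t+7)\bigr)=-(2t+10)$. The linear term of the denominator is then $-2t+22+2t+10=32$, as required.

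Finally, setting $t=0$ gives $\sum_n|\L_n(\sigma)|x^n=\sum_n h_n(\I_n^\sigma)x^n$, again by Corollary~\ref{cor:refined_enumeration}. This yields the second displayed formula: the polynomial becomes $1+22x+13x^2$ and the radical becomes $\sqrt{(1-x)(1-7x)^3}$. I expect no real obstacle beyond this bookkeeping.
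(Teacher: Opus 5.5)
Your route is the same as the paper's: it plugs B\'ona's formula~\eqref{eq:bona} into Corollary~\ref{cor:refined_enumeration}. Your computation of the general-$t$ formula is correct: the substitution, clearing $u^2$, identifying $u-8x=1-(t+7)x$, and the coefficients $-2(t-11)$ and $t^2-22t+13$ all check out.

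The error is in your last step. Setting $t=0$ in $1-(t-1)x$ gives $1+x$, not $1-x$. So the specialization is
\[
\sum_{n\ge0}\card{\L_n(\sigma)}\,x^n=\frac{1}{1+x}\,P_\sigma\Bigl(\frac{x}{1+x}\Bigr)=\frac{32x}{1+22x+13x^2-\sqrt{(1+x)(1-7x)^3}}.
\]
The second display in the statement, with $\sqrt{(1-x)(1-7x)^3}$, is a typo and is false as printed. The linear coefficient of that radical is $-11$, so the denominator is $33x+O(x^2)$. The series would then start with $32/33$ instead of $\card{\L_0(\sigma)}=1$. Your own spot check detects this if you run it at $t=0$.

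With the corrected radical, $\sqrt{(1+x)(1-7x)^3}=1-10x+13x^2+32x^3+64x^4+224x^5+\cdots$. The denominator is then $32x(1-x^2-2x^3-7x^4-\cdots)$, and the series begins $1+x^2+2x^3+8x^4+32x^5+\cdots$, matching Table~\ref{tab:LSR}.

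So do not claim the printed $t=0$ formula follows. State and derive the $(1+x)$ version; the general-$t$ identity needs no change.
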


The polynomials $\bh(\I_n^{4213},t)=\bh(\I_n^{4132},t)$ for small values of $n$ are given in Table~\ref{tab:h4213}. The sequence of constant terms (Table~\ref{tab:LSR}), giving the reduced Euler characteristic (up to sign), does not currently appear in~\cite{OEIS}.
A surprising consequence of this topological approach is that the number of ephemeral records has the same distribution on $\S_n(4213)$ and $\S_n(4132)$.

\begin{table}[htb]
    $$
    \begin{array}{r|c}
       n  & \bh(\I_n^{4213},t)=\bh(\I_n^{4132},t) \\ \hline
    1 & t \\ 2 & t^2+1 \\ 3 &  t^3+3t+2 \\ 4 & t^4 + 6t^2 + 8t + 8\\
    5 & t^5 + 10t^3 + 20t^2 + 40t + 32 \\
    6 & t^6 + 15t^4 + 40t^3 + 120t^2 + 192t + 144\\
    7 & t^7 + 21t^5 + 70t^4 + 280t^3 + 672t^2 + 1008t + 688\\
    8 & t^8 + 28t^6 + 112t^5 + 560t^4 + 1792t^3 + 4032t^2 + 5504t + 3456\\
    9 & t^9 + 36t^7 + 168t^6 + 1008t^5 + 4032t^4 + 12096t^3 + 24768t^2 + 31104t + 18032
    \end{array}
    $$
    \caption{The polynomials $\bh(\I_n^{4213},t)=\bh(\I_n^{4132},t)$ for small values of~$n$.}
    \label{tab:h4213}
\end{table}

For patterns $\sigma$ in class (ii), it was shown by Gessel~\cite{Gessel} that
$$|\S_n(\sigma)|=\frac{1}{(n+1)^2(n+2)}\sum_{k=0}^n\binom{2k}{k}\binom{n+1}{k+1}\binom{n+2}{k+1}.$$
Using Equation~\eqref{eq:h-Si} for the patterns $4321,4312,4123$, which are in class (ii), we deduce from Corollary~\ref{cor:refined_enumeration} that
\begin{align*}
\bh(\I_n^{4321},t)&=\bh(\I_n^{4312},t)=\bh(\I_n^{4123},t)\\
&=\sum_{i=0}^n  \binom{n}{i}\frac{1}{(i+1)^2(i+2)}\sum_{k=0}^i\binom{2k}{k}\binom{i+1}{k+1}\binom{i+2}{k+1}(t-1)^{n-i}.
\end{align*}

The polynomials $\bh(\I_n^{4321},t)=\bh(\I_n^{4312},t)=\bh(\I_n^{4123},t)$ for small values of $n$ are given in Table~\ref{tab:h4321}. The sequence of constant terms, listed in Table~\ref{tab:LSR}, does not currently appear in~\cite{OEIS}.

\begin{table}[htb]
    $$
    \begin{array}{r|c}
       n  & \bh(\I_n^{4321},t)=\bh(\I_n^{4312},t)=\bh(\I_n^{4123},t) \\ \hline
    1 & t \\ 2 & t^2+1 \\ 3 &  t^3+3t+2 \\ 4 & t^4 + 6t^2 + 8t + 8\\
    5 & t^5 + 10t^3 + 20t^2 + 40t + 32 \\
    6 & t^6 + 15t^4 + 40t^3 + 120t^2 + 192t + 145\\
    7 & t^7 + 21t^5 + 70t^4 + 280t^3 + 672t^2 + 1015t + 702\\
    8 & t^8 + 28t^6 + 112t^5 + 560t^4 + 1792t^3 + 4060t^2 + 5616t + 3598\\
    9 & t^9 + 36t^7 + 168t^6 + 1008t^5 + 4032t^4 + 12180t^3 + 25272t^2 + 32382t + 19280
    \end{array}
    $$
    \caption{The polynomials $\bh(\I_n^{4321},t)=\bh(\I_n^{4312},t)=\bh(\I_n^{4123},t)$ for small values of~$n$.}
    \label{tab:h4321}
\end{table}

Again, without relying on shellability of the corresponding complexes of pattern-avoiding injective words, there seems to be no simple combinatorial reason explaining why the number of ephemeral records has the same distribution on $\S_n(4321)$, $\S_n(4312)$ and $\S_n(4123)$. In particular, the proof of Wilf-equivalence of these patterns given by Backelin, West and Xin~\cite{BWX} does not appear to yield this refinement.

\begin{table}[htb]
$$\begin{array}{c|l}
\sigma & |\L_n(\sigma)| \text{ for } 1\le n\le 8\\ \hline
321,312 & 0, 1, 1, 3, 6, 15, 36, 91\\ \hline
4321,4312,4123 &  0, 1, 2, 8, 32, 145, 702, 3598\\ \hline
4231 &   0, 1, 2, 8, 32, 145, 703, 3616\\ \hline
4213,4132 &  0, 1, 2, 8, 32, 144, 688, 3456
\end{array}$$
\caption{Initial terms of the counting sequences of $\sigma$-avoiding permutations with no ephemeral records, for patterns $\sigma$ of length 3 and 4 starting with their largest entry.}
\label{tab:LSR}
\end{table}

\section{Separable permutations}\label{sec:separable}

Separable permutations are those that avoid both of the patterns $2413$ and $3142$. We denote the set of separable permutations of size $n$ by $\S_n(2413,3142)$, and we similarly generalize other notation such as $P_{2413,3142}(x)$ and $\I_n^{2413,3142}$, as well as the notion of $2413,3142$-blockers.

It is known~\cite{West} that separable permutations are enumerated by the large Schr\"oder numbers, and that the corresponding generating function is
\begin{equation}\label{eq:P_separable}
P_{2413,3142}(x)=\frac{3-x-\sqrt{1-6x+x^2}}{2}.
\end{equation}

The goal of this section is to prove that the complex $\I_n^{2413,3142}$ is shellable with respect to the lexicographic order. We will use a new shellability tool introduced in Section~\ref{sec:new_technique} as an ingredient in our proof.

\subsection{Characterization of blockers}\label{sec:characterizing_blockers}

We start by characterizing the $2413,3142$-blockers of any given $\pi\in\S_n(2413,3142)$ with respect to $\lex$. 
Throughout this section, we will use the term {\em blocker} to mean $2413,3142$-blocker with respect to $\lex$.

As shown in~\cite{BBL}, separable permutations can be described recursively as those that are obtained from smaller separable permutations by using {\em direct sum} and {\em skew sum} operations. For $\pi\in\S_n$ and $\tau\in\S_m$, their direct sum $\pi\oplus\tau$ and their skew sum $\pi\ominus\tau$ are permutations in $\S_{n+m}$ defined as
\begin{align*}    
(\pi\oplus\tau)_i&=\begin{cases}\pi_i & \text{if }1\le i\le n, \\ \tau_{i-n}+n & \text{if }n+1\le i\le n+m,\end{cases}\\
(\pi\ominus\tau)_i&=\begin{cases}\pi_i+m & \text{if }1\le i\le n, \\ \tau_{i-n} & \text{if }n+1\le i\le n+m.\end{cases}
\end{align*}

Following \cite{BBFGP}, we
we will represent separable permutations as {\em signed Schr\"oder trees with alternating signs}. These are rooted plane trees where every internal node represents either a $\oplus$ or a $\ominus$ operation on the permutations determined by its subtrees. In particular, each internal node has at least two children, and the leaves of $T(\pi)$ correspond to the entries of $\pi$. The children of a $\oplus$ node are either leaves or $\ominus$ nodes, and the children of a $\ominus$ node are either leaves or $\oplus$ nodes. See Figure~\ref{fig:separable} for an example. We use $\T_n$ to denote the set of signed Schr\"oder trees with alternating signs. It is shown in 
\cite[Proposition 2.13]{BBFGP} that this correspondence is a bijection between $\S_n(2413,3142)$ and $\T_n$.

\newcommand{\sq}[2]{\draw #1 rectangle ++(#2,#2);}

\begin{figure}[htb]
\begin{tikzpicture}[scale=.5]
\draw[dotted,gray] (0,0) grid (16,16);
\draw (0,0) rectangle (16,16);
\sq{(0,0)}{8}
\sq{(0,5)}{3}
\sq{(3,0)}{5}
\sq{(4,1)}{3}
\sq{(4,2)}{2}
\sq{(9,9)}{6}
\sq{(11,9)}{4}
\foreach \x in {(1,6),(2,7),(5,3),(9,9),(13,11),(14,12),(16,16)}{
    \filldraw[blue] \x+(-.5,-.5) circle (.25);
    \sq{\x}{-1}
}
\foreach \x in {(3,8),(4,1),(6,4),(7,2),(8,5),(10,15),(11,14),(12,10),(15,13)}{
    \filldraw[red] \x+(-.5,-.5) circle (.25);
    \sq{\x}{-1}
}
\begin{scope}[shift={(27,14)}]
\node (1) at (0,0) {$\oplus_R$};
\node (2) at (-6,-2) {$\ominus$};
\filldraw[blue] (-2,-2) circle (.25) coordinate (3);
\node (4) at (2,-2) {$\ominus$};
\filldraw[blue] (6,-2) circle (.25) coordinate (5);
\node (6) at (-8,-4) {$\oplus_L$};
\node (7) at (-4,-4) {$\oplus_O$};
\filldraw[red] (0,-4) circle (.25) coordinate (8);
\filldraw[red] (2,-4) circle (.25) coordinate (9);
\node (10) at (4,-4) {$\oplus_O$};
\filldraw[blue] (-9,-6) circle (.25) coordinate (11);
\filldraw[blue] (-8,-6) circle (.25) coordinate (12);
\filldraw[red] (-7,-6) circle (.25) coordinate (13);
\filldraw[red] (-5.5,-6) circle (.25) coordinate (14);
\node (15) at (-4,-6) {$\ominus$};
\filldraw[red] (-2.5,-6) circle (.25) coordinate (16);
\filldraw[red] (2.5,-6) circle (.25) coordinate (17);
\filldraw[blue] (3.5,-6) circle (.25) coordinate (18);
\filldraw[blue] (4.5,-6) circle (.25) coordinate (19);
\filldraw[red] (5.5,-6) circle (.25) coordinate (20);
\node (21) at (-5,-8) {$\oplus_L$};
\filldraw[red] (-3,-8) circle (.25) coordinate (22);
\filldraw[blue] (-5.5,-10) circle (.25) coordinate (23);
\filldraw[red] (-4.5,-10) circle (.25) coordinate (24);
\foreach \x in {2,...,5} {\draw (1) -- (\x);}
\draw (2) -- (6); \draw (2) -- (7);
\draw (4)--(8); \draw (4)--(9); \draw (4)--(10); 
\draw (6) -- (11); \draw (6) -- (12); \draw (6) -- (13);
\draw (7) -- (14); \draw (7) -- (15); \draw (7) -- (16);
\foreach \x in {17,...,20} {\draw (10) -- (\x);}
\draw (15)--(21); \draw (15)--(22);
\draw (21)--(23); \draw (21)--(24);
\end{scope}
\end{tikzpicture}
\caption{The separable permutation $\pi=6\,7\,8\,1\,3\,4\,2\,5\,9\,15\,14\,10\,11\,12\,13\,16$ and its tree $T(\pi)$. Blockers are colored in \textcolor{red}{red}, non-blockers are colored in \textcolor{blue}{blue}.}
\label{fig:separable}
\end{figure}

If $v$ is any node of $T(\pi)$, we will use $T_v$ to denote the subtree rooted at $v$.
The entries of $\pi$ corresponding to the leaves of $T_v$ form a {\em block} in $\pi$, meaning that their positions form an interval (i.e., they are consecutive) and so do their values. The {\em standardization} of this block is the permutation obtained by shifting down the values by the same amount, so that they become $\{1,2,\dots,r\}$ for some $r$.
We denote by $\st(T_v)$ the standardization of the block of entries corresponding to the leaves of $T_v$. For example, in Figure~\ref{fig:separable}, if $v$ is the $\ominus$ node of $T(\pi)$ that has three children, then the leaves of $T_v$ form the block $15\,14\,10\,11\,12\,13$, and $\st(T_v)=6\,5\,1\,2\,3\,4$.

In order to characterize the blockers in separable permutations, we will distinguish between three kinds of $\oplus$ nodes in $T(\pi)$. The root of the tree, if it is a $\oplus$ node, will be denoted by $\oplus_R$. Any $\oplus$ node that is the leftmost child of its parent (which is necessarily a $\ominus$ node) will be denoted by $\oplus_L$. Finally, any $\oplus$ node that is neither the root not the leftmost child of its parent will be denoted by $\oplus_O$ (where $O$ stands for ``other'').

\begin{lem}\label{lem:blockers_separable}
    Let $\pi\in\S_n(2413,3142)$. An entry $\pi_j$ is a $2413,3142$-blocker of $\pi$ with respect to $\lex$ if and only if the corresponding leaf of $T(\pi)$ satisfies any of the following conditions:
    \begin{enumerate}[(a)]
        \item it is a child of a $\ominus$ node,
        \item is it the leftmost or the rightmost child of a $\oplus_O$ node,
        \item it is the rightmost child of a $\oplus_L$ node,
    \end{enumerate}
\end{lem}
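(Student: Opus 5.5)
The plan is to prove the two directions separately. Throughout, I use Lemma~\ref{lem:no_gained_blockers} and Lemma~\ref{lem:unrestricted_blockers}: every blocker is an unrestricted blocker, i.e.\ an entry that is not an ephemeral record. I also use the block structure of $T(\pi)$. The leaves of each subtree $T_v$ form a block. The children of a $\oplus$ node appear left to right in increasing order of their blocks, and those of a $\ominus$ node in decreasing order. Moving a single entry $x=\pi_j$ produces a permutation $\pi'$ in which any occurrence of $2413$ or $3142$ must use $x$, since $\pi$ with $x$ deleted is separable. So avoidance of $\pi'$ can always be checked by examining only occurrences through $x$.

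For sufficiency (each of (a), (b), (c) gives a blocker), I will exhibit an explicit lexicographically smaller separable $\pi'$ in each case. In every case the move relocates $x$ next to an adjacent sibling block, so that $\pi'$ again has a Schr\"oder tree obtained by a local surgery of $T(\pi)$.
\begin{itemize}
\item[(a)] Let $x$ be a leaf child of a $\ominus$ node $w$. If $x$ is not the last child of $w$, let $B$ be the next sibling block (all smaller than $x$), and move $x$ to just after $B$. In the tree this replaces the children $x,B$ of $w$ by a single $\oplus$ node with children $B,x$, merging with $B$ if $B$ is a $\oplus$ node. The result is separable and lexicographically smaller, since its first new entry is the first entry of $B$, which is less than $x$. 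If $x$ is the last child of $w$, let $B'$ be the previous sibling and move $x$ to just before $B'$, producing the $\oplus$ node $(x,B')$ inside $w$.
\item[(b)] Let $x$ be the leftmost or rightmost leaf of a $\oplus_O$ node $u$, whose parent $\ominus$ node $w$ has a sibling block $C$ immediately to the left of $u$ (this exists because $u$ is not the leftmost child). For the leftmost leaf, I move $x$ to just before $C$. For the rightmost leaf, I move $x$ to after the next smaller material, or equivalently swap it past the $\ominus$ boundary. I then check, using the block inequalities, that the only new four-element subsequences through $x$ have the forms $1\,2\,\cdot\,\cdot$ or $3\,2\,1$ within blocks, and so cannot be $2413$ or $3142$.
\item[(c)] Let $x$ be the rightmost leaf of a $\oplus_L$ node. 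Then $x$ is immediately followed by a smaller entry, namely the next sibling under the parent $\ominus$. Sliding $x$ just past that sibling block is handled as in case~(a).
\end{itemize}

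For necessity, I list the leaves satisfying none of (a)--(c). These are: children of $\oplus_R$; non-extreme children of $\oplus_O$ or $\oplus_L$ nodes; and the leftmost child of a $\oplus_L$ node, which is not a rightmost child, since a $\oplus$ node has at least two children.
\begin{itemize}
\item A leaf child of $\oplus_R$ is a record followed by a larger entry or at the end, hence an ephemeral record, so it is not an unrestricted blocker.
\item A leftmost child of a $\oplus_L$ node with every ancestor on the leftmost-child path of type $\oplus_R$/$\oplus_L$ is similarly an ephemeral record.
\item The remaining non-blockers $x$ are genuinely restricted: they are not records. Here I must show that every lexicographically smaller relocation of $x$ creates a $2413$ or $3142$. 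By Lemma~\ref{lem:unrestricted_blockers} such a relocation moves $x$ leftward to just before some larger entry $y$, because $x$ is followed by a larger entry. Take $a$ to be the sibling of $x$ immediately to its left inside the $\oplus$ parent, which exists since $x$ is not the leftmost child there, or, in the $\oplus_L$ leftmost case, the appropriate entry of the preceding ancestor block. Also take $b$ to be an entry to the right of $x$ in the same $\oplus$ node, and $z$ an entry from a later sibling under the nearest $\ominus$ ancestor that is smaller than the whole $\oplus$ block. I expect the moved word to contain a subsequence such as $y\,x\,\cdots$ forming $3142$ (for example $y,x$ followed by $b$ and $z$ in some order), or $2413$ when $y$ is in a different relative position.
\end{itemize}

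The main obstacle is this last case analysis: choosing, for each possible landing position of $x$, the witnessing four entries uniformly. I would first try to organize it by the lowest common ancestor of $x$ and $y$ in $T(\pi)$, which is a $\ominus$ node since $y>x$ lies to the left of $x$. I would then pick the witnesses from the child subtrees of that $\ominus$ node and of the $\oplus$ parent of $x$. This reduces the argument to a bounded number of configurations, each checked directly.
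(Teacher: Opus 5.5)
Your moves for (a), for the leftmost leaf in (b), and for (c) coincide with the paper's. Your treatment of the record cases via ephemeral records (Lemmas~\ref{lem:no_gained_blockers} and~\ref{lem:unrestricted_blockers}) is also fine. One small correction: not all remaining non-blockers are non-records. For example, $3$ in $2341$ is a middle child of a $\oplus_L$ node and is an ephemeral record, which is harmless.

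The first real problem is the rightmost leaf $x$ of a $\oplus_O$ node $u$. Moving $x$ rightward ``after the next smaller material'' fails when $u$ is the last child of its $\ominus$ parent $w$, and the two moves you call equivalent are different. In $\pi=312$ nothing follows $x=2$. In $\pi=42351$, $w$ has children $4$ and $u=\oplus(2,3)$, and $x=3$; the next smaller entry is $1$, and moving $3$ past it gives $42513$, whose subsequence $2513$ is an occurrence of $2413$. The move that always works is leftward: place $x$ immediately before the block $C$ of the left sibling of $u$. This turns $\tau\ominus(\rho\oplus 1)$ into $(1\oplus\tau)\ominus\rho$, which is separable and lexicographically smaller (here $42351\mapsto 34251$).

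The more serious gap is that the core of the necessity direction is not proved. For non-record $x$ you only ``expect'' a forbidden pattern. Your suggested witness $y\,x\cdots$ is also in the wrong order, since after the relocation $x$ sits immediately before $y$.

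The missing observation makes your planned case analysis over landing positions unnecessary. Let $p$ be the $\oplus$ parent of $x=\pi_j$, and let $y>x$ lie to the left of $x$. Then $y\notin T_p$, because entries of $T_p$ to the left of $x$ are smaller. Since the values of $T_p$ form an interval, $y$ exceeds every entry of $T_p$.
\begin{itemize}
\item For a middle child, $x\,y\,\pi_{j-1}\,\pi_{j+1}$ is an occurrence of $2413$, for every choice of $y$.
\item For the leftmost child of a $\oplus_L$ node $p$ with $\ominus$ parent $g$: the child of the lowest common ancestor of $x$ and $y$ that contains $x$ is a $\oplus_O$ node $h$ strictly above $g$. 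Hence $y$ lies to the left of, and is larger than, all of $T_h$. Take $a$ in the sibling of $p$ immediately to its right, and $b$ in a sibling $g'$ of $g$, which exists since $g$ is not the root. The moved word contains $x\,y\,a\,b$ if $g'$ is to the right of $g$, and $x\,y\,b\,\pi_{j+1}$ if $g'$ is to the left of $g$. Both are occurrences of $2413$.
\end{itemize}
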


\begin{proof}
    Recall that an entry $\pi_j$ of $\pi\in\S_n(2413,3142)$ is a blocker if it can be moved within $\pi$ to create a separable permutation that is lexicographically smaller than $\pi$.

    Let $\ell$ denote the leaf of $T(\pi)$ corresponding to $\pi_j$. We start by showing that if $\ell$ satisfies any of the above conditions, then $\pi_j$ is a blocker.
    
    Suppose first that the parent of $\ell$ is a $\ominus$ node, which we call $p$ (for {\it parent}). 
    If $\ell$ is not the rightmost child of $p$, let $s$ (for {\it sibling}) be the child of $p$ immediately to the right of $\ell$. Then the leaves of the subtree $T_s$ correspond to a block $\pi_{j+1}\pi_{j+2}\dots\pi_r$ for some $j+1\le r\le n$, whose values form an interval with maximum $\pi_j-1$ (since p is a $\ominus$ node). Let $\pi'$ be the permutation obtained from $\pi$ by moving $\pi_j$ to position $r$, that is, shifting $\pi_j$ to the right past this block.
    Since $\pi_j>\pi_{j+1}$, we have $\pi'\lex\pi$. 
    To show that $\pi'$ is separable, let $\tau=\st(T_s)$, that is, the standardization of $\pi_{j+1}\pi_{j+2}\dots\pi_r$. In the recursive decomposition of $\pi$ as a separable permutation, the operation between the entry in position $j$ and the block of entries in positions $j+1,j+2,\dots,r$ is $1\ominus \tau$. Replacing this operation with $\tau\oplus 1$, we obtain the recursive decomposition of $\pi'$ as a separable permutation.

    If $\ell$ is the rightmost child of $p$, let now $s$ be the child of $p$ immediately to the left of $\ell$ (which must exist since $p$ has at least two children). Then the leaves of $T_s$ correspond to a block $\pi_{r}\pi_{r+1}\dots\pi_{j-1}$ for some $1\le r\le j-1$, whose values form an interval with minimum $\pi_j+1$. Let $\pi'$ be the permutation obtained from $\pi$ by moving $\pi_j$ to position $r$.
    Since $\pi_j<\pi_{r}$, we have $\pi'\lex\pi$. 
    Additionally, $\pi'$ is separable because, letting $\tau=\st(T_s)$, we can obtain the recursive decomposition of $\pi'$ as a separable permutation by changing the operation $\tau\ominus 1$ to $1\oplus\tau$ in the decomposition of $\pi$.

    Suppose now that $\ell$ is the leftmost child of a $\oplus_O$ node $p$. In particular, $\st(T_p)=1\oplus \rho$ for some permutation $\rho$.
    Since $p$ is not the leftmost child of its parent, there must be another child of the same parent that is immediately to the left of $p$, let us call it $u$ (for {\it uncle}). The leaves of $T_u$ correspond to a block $\pi_{r}\pi_{r+1}\dots\pi_{j-1}$ for some $1\le r\le j-1$. Let $\pi'$ be the permutation obtained from $\pi$ by moving $\pi_j$ to position $r$.
    Since $\pi_j<\pi_{r}$, we have $\pi'\lex\pi$. 
    Additionally, $\pi'$ is separable because, letting $\tau=\st(T_u)$, we can obtain the recursive decomposition of $\pi'$ as a separable permutation by changing the operations $\tau\ominus (1\oplus \rho)$ to $1\oplus(\tau\ominus\rho)$ in the decomposition of $\pi$.

    Next suppose that $\ell$ is the rightmost child of a $\oplus_O$ node $p$. In particular, $\st(T_p)=\rho \oplus 1$ for some permutation $\rho$. Let $u$ be the sibling of $p$ immediately to its left, and let $\tau=\st(T_u)$.
    Let $\pi'$ be the permutation obtained from $\pi$ by moving $\pi_j$ to the left past all the entries corresponding to the leaves of $T_u$ and $T_p$ (other than $\ell$ itself). Since the entries corresponding to the leaves $T_u$ are all larger than $\pi_j$, we have $\pi'\lex\pi$.
    Additionally, $\pi'$ is separable because we can obtain the recursive decomposition of $\pi'$ as a separable permutation by changing the operations $\tau\ominus (\rho \oplus 1)$ to $(1\oplus\tau)\ominus\rho$ in the decomposition of $\pi$.

    Finally, suppose that $\ell$ is the rightmost child of a $\oplus_L$ node $p$. Again, $\st(T_p)=\rho \oplus 1$ for some permutation $\rho$. Now let $u$ be the sibling of $p$ immediately to its right, and let $\tau=\st(T_u)$.
    Let $\pi'$ be the permutation obtained from $\pi$ by moving $\pi_j$ to the right past all the entries corresponding to the leaves of $T_u$. Since these entries are smaller than  $\pi_j$, we have $\pi'\lex\pi$.
    Additionally, $\pi'$ is separable because we can obtain the recursive decomposition of $\pi'$ as a separable permutation by changing the operations $(\rho \oplus 1)\ominus\tau$ to $(\rho\ominus\tau)\oplus 1$ in the decomposition of $\pi$.
    
    It remains to prove that if $\ell$ does not satisfy any of the conditions (a), (b), (c), then $\pi_j$ is not a blocker.
    A leaf that does not satisfy these conditions must be a {\em middle} (that is, neither leftmost not rightmost) child of any kind of $\oplus$ node, a leftmost child of a $\oplus_L$ node, or a leftmost or rightmost child of a $\oplus_R$ node. 

    Suppose first that $\ell$ is a middle child of some $\oplus$ node $p$. In particular, $\pi_{j-1}<\pi_j<\pi_{j+1}$. Since $\pi_j<\pi_{j+1}$, the only way for $\pi_j$ to be a blocker is if there exists some $i<j$ such that $\pi_i>\pi_j$, and moving $\pi_j$ to position $i$ produces a separable permutation $\pi'$. Since $p$ is a $\oplus$ node, the conditions $i<j$ and $\pi_i>\pi_j$ imply that the leaf corresponding to $\pi_i$ is not in $T_p$. And since the values of the entries corresponding to leaves in $T_p$ form an interval, it follows that $\pi_i>\pi_{j+1}$ as well. But then $\pi_j\pi_i\pi_{j-1}\pi_{j+1}$ would be an occurrence of $2413$ in $\pi'$, contradicting that it is separable.

    Suppose now that $\ell$ is the leftmost child of a $\oplus_L$ node $p$. In particular, $\pi_j<\pi_{j+1}$. Let $g$ be the parent of $p$, and let $u$ be the sibling of $p$ immediately to its right. Let $a$ be an entry corresponding to a leaf of $T_u$, and note that $\pi_j>a$, since $g$ is a $\ominus$ node.
    Since $\pi_j<\pi_{j+1}$, the only way for $\pi_j$ to be a blocker is if there exists some $i<j$ such that $\pi_i>\pi_j$, and moving $\pi_j$ to position $i$ produces a separable permutation $\pi'$.
    For such $i$ to exist there must be some ancestor $h$ of $\ell$ that is a $\oplus_O$ node, so that a left sibling of this ancestor contains the leaf corresponding to $\pi_i$. Note that $\pi_i$ is larger than the entries corresponding to all the leaves of $T_h$.
    Since $g$ is not the root (because $h$ is an ancestor), it must have some sibling $g'$. Let $b$ be an entry corresponding to a leaf of $T_{g'}$.
    
    If $g'$ is to the right of $g$, then $b$ is larger than all the entries corresponding to leaves of $T_g$. In this case, $\pi'$ contains the subsequence $\pi_j\pi_iab$, which is an occurrence of $2413$.
    On the other hand, if $g'$ is to the left of $g$, then $b$ is smaller than all the entries corresponding to leaves of $T_g$. In this case, $\pi'$ contains the subsequence $\pi_j\pi_ib\pi_{j+1}$, which is again an occurrence of $2413$, contradicting the fact that $\pi'$ is separable.

    Finally, if $\ell$ is the leftmost child of the root $\oplus_R$, then $j=1$ and $\pi_j=1$, which clearly cannot be moved to produce a lexicographically smaller permutation. Similarly, if $\ell$ is the rightmost child of the root $\oplus_R$, then $j=n$ and $\pi_j=n$, which is not a blocker either.
\end{proof}

\subsection{Refined enumeration by blockers}

Next we use the characterization from Lemma~\ref{lem:blockers_separable} to enumerate separable permutations with respect to the number of blockers. 
Let $S(t,x)$ be the ordinary generating function for separable permutations where $x$ marks the size and $t$ marks the number of blockers. Via the above bijection between $\S_n(2413,3142)$ and $\T_n$, this is also the generating function for the set $\T=\bigcup_{n\ge0}\T_n$ where $x$ marks the number of leaves, and $t$ marks the number of leaves satisfying the conditions in Lemma~\ref{lem:blockers_separable}.

\begin{prop}\label{prop:S}
    The generating function for separable permutations with respect to size and the number of blockers is
    $$S(t,x)=\frac{3-(3-2t)x-\sqrt{1-2(1+2t)x+(1+4t-4t^2)x^2}}{2\left(1-(1-t)x\right)^2}.$$
\end{prop}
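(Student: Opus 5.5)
We count signed Schröder trees with alternating signs by leaves, weighting each leaf by $t$ exactly when Lemma~\ref{lem:blockers_separable} declares it a blocker. Whether a leaf is a blocker depends only on two things: the type of its parent ($\ominus$, $\oplus_R$, $\oplus_L$, $\oplus_O$) and whether it is leftmost, middle, or rightmost among its siblings. So we introduce auxiliary generating functions $A(t,x)$ and $B(t,x)$. Here $A$ counts trees rooted at a $\ominus$ node (or a single leaf) that sit as a child of a $\oplus$ node. $B_L$ counts trees rooted at a $\oplus$ node (or a leaf) that sit as the leftmost child of a $\ominus$ node, and $B_O$ is the analogue for a non-leftmost child. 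Each is weighted as it would be inside a larger tree.

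The recursive equations come from the children sequences. A $\ominus$ node has a sequence of $m\ge2$ children. Its first child is a leaf, weighted $tx$ by condition (a), or a $\oplus_L$ subtree. Each later child is a leaf, again weighted $tx$, or a $\oplus_O$ subtree. A $\oplus_O$ node has children $c_1,\dots,c_m$ with $m\ge2$. Leaf children at the two end positions get weight $tx$ by condition (b), and middle leaf children get weight $x$. Non-leaf children are $\ominus$-rooted subtrees, counted by $D$, the generating function of $\ominus$-rooted trees. A $\oplus_L$ node has its leftmost leaf weighted $x$, its rightmost leaf weighted $tx$ by condition (c), and its middle leaves weighted $x$. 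The root, if it is a $\oplus_R$ node, has all leaf children weighted $x$. Write $u_0=x+D$ for an unweighted child slot of a $\oplus$ node and $u_1=tx+D$ for a weighted end slot. Summing over $m\ge2$ gives
\[
\oplus_O=\frac{u_1^2}{1-u_0},\qquad \oplus_L=\frac{u_0u_1}{1-u_0},\qquad \oplus_R=\frac{u_0^2}{1-u_0}.
\]
Similarly, with $v_L=tx+\oplus_L$ and $v_O=tx+\oplus_O$, we get
\[
D=\frac{v_Lv_O}{1-v_O}.
\]
We then have $S=1+x+D+\oplus_R$, where the $1$ accounts for the empty permutation and the $x$ for the single leaf. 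We assume the lemma's convention for a root-level $\ominus$ node, which is the same as for any other $\ominus$ node.

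Next, we eliminate variables. Substituting the expressions for $\oplus_O$ and $\oplus_L$ into the equation for $D$ yields a single algebraic equation for $D$, which should be quadratic after clearing denominators. Concretely, $1-u_0=1-x-D$ appears as a common denominator, so multiplying through by $(1-x-D)$ a couple of times should leave a quadratic in $D$. We then solve it, choosing the branch with $D=O(x^2)$, and substitute into $S$. The expected discriminant is $1-2(1+2t)x+(1+4t-4t^2)x^2$. The denominator factor $(1-(1-t)x)^2$ should arise from the terms $1-u_0$ combined with the $t$-shifts.

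The main obstacle is this algebraic elimination. The system is not obviously quadratic, because $v_O$ is itself a rational function of $D$. If the direct route gets messy, we will change variables to $y=x+D$, the generating function for an arbitrary child of a $\oplus$ node. This reduces everything to rational functions in $y$, which are simpler to handle.

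As a check, setting $t=1$ must recover $P_{2413,3142}(x)$ from Equation~\eqref{eq:P_separable}. We will also compare small coefficients by brute force: for $n=2$ the coefficient should be $1+t$, and for $n=3$ it should be $1+2t+2t^2+t^3$. This last value follows from Lemma~\ref{lem:blockers_separable} applied to the six permutations of size $3$.
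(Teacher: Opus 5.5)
Your derivation is the paper's own. Your $\oplus_R,\oplus_L,\oplus_O$ and $D=v_Lv_O/(1-v_O)$ are exactly its equations for $S^{\oplus_R},S^{\oplus_L},S^{\oplus_O},S^{\ominus}$, and the paper likewise eliminates down to a quadratic in $S^\ominus$. To make the elimination step concrete:
\begin{itemize}
\item One has $v_L(1-u_0)=D^2+xD+tx$ and $v_O(1-u_0)=D^2+txD+tx+(t^2-t)x^2$.
\item Multiply $D(1-v_O)=v_Lv_O$ by $(1-u_0)^2$. The $D^4$ and $D^3$ terms cancel, and a common factor $1-(1-t)x$ divides out, leaving $2D^2-(1-(1+2t)x)D+t^2x^2=0$.
\item Also $S=1+u_0+u_0^2/(1-u_0)=1/(1-x-D)$. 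Rationalizing this expression is where $(1-(1-t)x)^2$ comes from.
\end{itemize}

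What is wrong is your planned coefficient check.
\begin{itemize}
\item The coefficient of $x^2$ is $1+t^2$, not $1+t$. The tree of $21$ is a $\ominus$ node with two leaf children, and both are blockers by condition (a).
\item The coefficient of $x^3$ is $1+3t^2+2t^3$, not $1+2t+2t^2+t^3$. The permutations $123,132,213,231,312,321$ have $0,2,2,2,3,3$ blockers respectively. This agrees with $h_i(\I_3)=\binom{3}{i}d_i$.
\item No coefficient can contain a $t^1$ term at all, since $h_1=f_0-n=0$.
\end{itemize}
The values you wrote are the inversion generating functions $1+t$ and $(1+t)(1+t+t^2)$, not blocker counts. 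With the correct values the stated formula passes the check; with yours, the check would wrongly flag a correct formula.
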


\begin{proof}
We will obtain an equation for $S\coloneqq S(t,x)$ by using a recursive decomposition of trees in $\T$. Define generating functions $S^\ominus\coloneqq S^\ominus(t,x)$, $S^{\oplus_R}\coloneqq S^{\oplus_R}(t,x)$, $S^{\oplus_L}\coloneqq S^{\oplus_L}(t,x)$, $S^{\oplus_O}\coloneqq S^{\oplus_O}(t,x)$ for subtrees where the root is a $\ominus$, $\oplus_R$, $\oplus_L$ and $\oplus_O$ node, respectively, again with $x$ marking the number of leaves and $t$ marking the number of leaves meeting the conditions in Lemma~\ref{lem:blockers_separable}. These generating functions satisfy the following system of equations:
\begin{align}
    S & = 1+x+S^{\oplus_R}+S^\ominus, \label{eq:S}\\
    S^{\oplus_R} & = \frac{(x+S^\ominus)^2}{1-x-S^\ominus}, \label{eq:SR}\\
    S^{\oplus_L} & = \frac{(x+S^\ominus)(tx+S^\ominus)}{1-x-S^\ominus}, \label{eq:SL}\\
    S^{\oplus_O} & = \frac{(tx+S^\ominus)^2}{1-x-S^\ominus}, \label{eq:SO}\\
    S^{\ominus} & = \frac{(tx+S^{\oplus_L})(tx+S^{\oplus_O})}{1-tx-S^{\oplus_O}}. \label{eq:Sminus}
\end{align}

Indeed, Equation~\eqref{eq:S} holds because a separable permutation is either empty, has size $1$ (contributing $x$ since it has no blockers), or has size at least $2$, in which case it is represented by a tree in $\T$ whose root is either a $\oplus_R$ or a $\ominus$ node.
Equation~\eqref{eq:SR} describes the fact that the sequence of subtrees of a $\oplus_R$ has at least two elements, and each of them is either a leaf (contributing $x$ since it is not a blocker by Lemma~\ref{lem:blockers_separable}) or a tree rooted at a $\ominus$ node (contributing $S^\ominus)$. 
For Equation~\eqref{eq:SL}, again we use that the subtrees of a $\oplus_L$ are either leaves or trees rooted at a $\ominus$ node, but the difference here is that if the rightmost child of a $\oplus_L$ node is a leaf, then it is a blocker by Lemma~\ref{lem:blockers_separable}(c), so it contributes $tx$.
For Equation~\eqref{eq:SO}, we use that both the leftmost and the rightmost children of a $\oplus_O$ node, if they are leaves, are blockers by Lemma~\ref{lem:blockers_separable}(a), so they contribute $tx$.
Finally, to obtain Equation~\eqref{eq:Sminus}, we use that the subtrees of a $\ominus$ are either leaves (which are blockers by Lemma~\ref{lem:blockers_separable}(a)) or trees rooted at a $\oplus_L$ node (the leftmost child) or a $\oplus_O$ node (the other children).

To solve this system of equations, first substitute the expressions for $S^{\oplus_L}$ and $S^{\oplus_O}$ from Equations~\eqref{eq:SL} and~\eqref{eq:SO} into Equation~\eqref{eq:Sminus} to get a quadratic equation involving only $S^{\ominus}$. Solving for $S^{\ominus}$ gives
$$S^\ominus=\frac{1-(1+2t)x-\sqrt{1-2(1+2t)x+(1+4t-4t^2)x^2}}{4}.$$
Substituting this expression into Equation~\eqref{eq:SR} we get a formula for $S^{\oplus_R}$, which we can then use in Equation~\eqref{eq:S} to obtain the stated expression for $S(t,x)$.
\end{proof}

\subsection{Proving shellability}
We can now use the tools from Section~\ref{sec:new_technique} to prove shellability for the complex of separable injective words.

\begin{thm}
     The complex $\I_n^{2413,3142}$ is pure of dimension $n-1$ and is shellable with respect to $\lex$.
\end{thm}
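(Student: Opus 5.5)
Purity is Corollary~\ref{cor:separable-pure}, so the facets of $\I_n^{2413,3142}$ are exactly the separable permutations in $\S_n$, and the complex is pure of dimension $n-1$. For shellability I would apply Lemma~\ref{lem:hvector_implies_shelling} to the lexicographic order on $\S_n(2413,3142)$. It then suffices to show that the blocker vector $(B_0,\dots,B_n)$ of this order equals the $h$-vector of the complex. Equivalently, for every $n$,
\[
\sum_{\pi\in\S_n(2413,3142)} t^{\#\text{blockers of }\pi} = h(\I_n^{2413,3142},t).
\]
Both sides have generating functions already available: the left via Proposition~\ref{prop:S}, the right via Proposition~\ref{prop:GFh}.

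\textbf{Step 1: the blocker side.} By Lemma~\ref{lem:blockers_separable} and Proposition~\ref{prop:S},
\[
\sum_{n\ge0}\sum_{j} B_j\, t^j x^n = S(t,x).
\]

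\textbf{Step 2: the $h$-vector side.} Proposition~\ref{prop:GFh}, together with Equation~\eqref{eq:P_separable}, gives
\[
\sum_{n\ge 0} h(\I_n^{2413,3142},t)\,x^n=\frac{1}{1-(1-t)x}\,P_{2413,3142}\!\left(\frac{tx}{1-(1-t)x}\right).
\]
Since $P_{2413,3142}(y)=\frac{3-y-\sqrt{1-6y+y^2}}{2}$, set $y=tx/u$ with $u=1-(1-t)x$. Clearing denominators inside the square root gives
\[
1-6y+y^2=\frac{u^2-6txu+t^2x^2}{u^2}.
\]
Expanding,
\[
u^2-6txu+t^2x^2 = 1-2(1+2t)x+(1+4t-4t^2)x^2,
\]
where the coefficient of $x$ is $-2(1-t)-6t$ and the coefficient of $x^2$ is $(1-t)^2+6t(1-t)+t^2$. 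This is exactly the radicand in $S(t,x)$. The remaining terms become
\[
\frac{3u-tx-\sqrt{\cdot}}{2u^2}=\frac{3-(3-2t)x-\sqrt{\cdot}}{2(1-(1-t)x)^2},
\]
using $3u-tx=3-3(1-t)x-tx=3-(3-2t)x$. So the two generating functions coincide. One caveat: the $n=0$ and $n=1$ conventions, i.e.\ the constant term $1$ and the linear term $x$, need to be checked to match, since Equation~\eqref{eq:P_separable} as printed includes the empty permutation.

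\textbf{Step 3: conclusion.} Comparing coefficients of $t^ix^n$ gives $B_i=h_i$ for all $i$ and $n$. Lemma~\ref{lem:hvector_implies_shelling} then shows the lexicographic order is a shelling. The main obstacle is really Lemma~\ref{lem:blockers_separable} and the system in Proposition~\ref{prop:S}, both already established. What remains is the algebraic identity above, together with care about the correct sign of the square root branch and the low-order terms.
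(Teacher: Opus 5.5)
Your proposal is correct and follows the paper's proof. It takes purity from Corollary~\ref{cor:separable-pure}, matches the generating function from Proposition~\ref{prop:GFh} and Equation~\eqref{eq:P_separable} against $S(t,x)$ from Proposition~\ref{prop:S}, and invokes Lemma~\ref{lem:hvector_implies_shelling}, with your explicit algebra supplying the computation the paper leaves implicit. The caveats you raise about low-order terms and the square-root branch are harmless: after your substitution the two series are literally the same expression, each with constant term $1$ and each using the branch of the square root with constant term $1$.
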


\begin{proof}
It is proven in Corollary~\ref{cor:separable-pure} that the complex $\I_n^{2413,3142}$ is pure of dimension $n-1$.
Using Proposition~\ref{prop:GFh} and Equation~\eqref{eq:P_separable}, we obtain the generating function for the $h$-polynomials of this complex:
\begin{align} \nonumber
    \sum_{n=0}^\infty h(\I_n^{2413,3142},t)\,x^n
&=\frac{1}{1-(1-t)x}\,P_{2413,3142}\left(\frac{tx}{1-(1-t)x}\right)\\
&=\frac{3-(3-2t)x-\sqrt{1-2(1+2t)x+(1+4t-4t^2)x^2}}{2\left(1-(1-t)x\right)^2}. \label{eq:separable_hvector}
\end{align}

Comparing Equation~\eqref{eq:separable_hvector} with Proposition~\ref{prop:S}, we see that
$$\sum_{n=0}^\infty h(\I_n^{2413,3142},t)\,x^n=S(t,x),$$
so the $h$-polynomial of $\I_n^{2413,3142}$ enumerates permutations in $\S_n(2413,3142)$ according to the number of  $2413,3142$-blockers with respect to $\lex$. Using Lemma~\ref{lem:hvector_implies_shelling}, we deduce that the lexicographic order on $\S_n(2413,3142)$ is a shelling order of the facets of $\I_n^{2413,3142}$. 
\end{proof}

With the above setup, we can also easily compute the top $h$-numbers $h_n(\I_n^{2413,3142})$.
Combining Equations~\eqref{eq:Euler_characteristic} and~\eqref{eq:P_separable}, we deduce that 
\[
    \sum_{n=0}^\infty h_n(\I_n^{2413,3142})\,x^n =
\frac{3+2x-\sqrt{1-4x-4x^2}}{2(1+x)^2}.
\]
The coefficients appear as sequence \cite[A052705]{OEIS}, which also arises in lattice path enumeration.

\section{Homology bases}\label{homology-section}

In this section, we construct a homology basis for $\I_n^{\sigma }$ for each each extremal-endpoint pattern $\sigma $.  We do this first for those patterns $\sigma $ whose largest letter is leftmost,
using as a key ingredient the homology facets determined by the shellings constructed earlier in the paper. 
We then use symmetries (which yield isomorphisms of complexes that serve as transfer maps), allowing us to construct 
homology bases for the complexes  $\I_n^{\sigma }$ for any pattern $\sigma $ 
whose largest or smallest letter is leftmost or rightmost, i.e., any extremal-endpoint pattern.  In particular, this gives homology bases for all patterns of length 3 as well as for most patterns of length 4.

\begin{remark} The homology basis construction in \cite{MW} for the full complex of injective words  bears some resemblance to the homology bases we will provide now for the subcomplexes $\I_n^{\sigma }$ of the complex of injective words that avoid a pattern  $\sigma $ 
that starts with its largest letter.
However, there are significant additional subtleties  to be overcome  in turning to subcomplexes avoiding a pattern.
\end{remark}

As mentioned above, each of our shellings in the previous sections specifies  an associated  collection of homology facets, namely the collection of facets which attach along their entire boundaries in the shelling steps which insert these facets. It is well known (and relatively easy to prove e.g.\ using discrete Morse theory)  
that  the number of homology facets in a shelling of a Boolean cell complex $\Delta $ equals the rank of the top homology group of $\Delta $.

In what follows, we describe and justify  for a pattern $\sigma $ whose largest letter is leftmost  how to use each homology facet $\pi $ in the lexicographic shelling order for $\I_n^{\sigma }$ to specify an alternating sum 
$z_{\pi }$ of facets that we prove  is a homology class.  Moreover, we  do so in such a way that the collection of homology classes  obtained this way from the various homology facets comprises a homology basis.

Before turning to this construction, we recall how the boundary map  is defined for $\I_n$ and its subcomplexes.

 \begin{defn}
     Define the boundary map $d_r: C_r\rightarrow C_{r-1}$ from the $r$-th chain group to the $(r-1)$-st chain group for $\I_n$
     as the linear 
     map  
     satisfying 
    $$d_r(a_1\cdots a_{r+1}) = \sum_{i=0}^{r+1}(-1)^{i-1}a_1\cdots \hat{a}_i\cdots a_{r+1}$$ for any injective word $a_1\cdots a_{r+1}$ of length $r+1$.
 \end{defn}

Recall from Lemma~\ref{thm:shellability_start_with_k}
that, for the full complex of injective words, a permutation $\pi $ gives rise to  a homology facet in the shelling order which orders permutations  (written in one-line notation) lexicographically  if and only if each letter $\pi_i$  in $\pi $ that is strictly larger than all letters to its left is immediately followed by a letter  $\pi_{i+1}$ that is strictly smaller than it.   We have shown in Corollary ~\ref{homology-facet-description} that this same characterization of homology facets holds for $\I_n^{\sigma }$ for  any permutation pattern $\sigma $ whose largest letter is leftmost.

Next are a few simple examples of homology facets $\pi $ in $\I_n^{321}$ and their associated homology classes $z_{\pi }$.  Later in this section, we will describe and justify the general construction which yields these homology classes.

\begin{ex}\label{simplest-examples}
The homology facet $\pi = 2143$ in $\I_4^{321}$ gives rise to a homology class $[z_{\pi }]$ for   $$z_{\pi } = 2143 + 1243 + 2134 + 1234.$$
The  homology facet $\pi = 3142$ in $\I_4^{321}$ likewise yields the  homology class  $[z_{\pi }]$ given by 
$$z_{\pi } = 3142 + 1342 + 3124 + 1324.$$ 

We leave it to the interested reader to apply the boundary map to  $z_{\pi }$ in either case  above and check that the boundary is indeed 0.
\end{ex}

In Example~\ref{simplest-examples}, all of the letters in the homology facets are either left-to-right maxima or their immediate succesors, making these examples considerably simpler than is the case for arbitrary homology facets.  The next example is more indicative of the general case.

Before turning to this example,  we first introduce a nonstandard way to decompose the boundary map that will play a crucial role both in understanding this example and more generally in justifying that our proposed cycles are indeed cycles.

 \begin{defn}\label{def:boundary-splitting}
Let $d_{\underline{i}}$ denote  the part of the boundary map $d_r$ (for any $r\le n-1$) which deletes from any injective word the letter $i$ from whatever position $j$ it occurs in with coefficient $(-1)^{j-1}$.  If $i$ does not appear in $a_1\cdots a_{r+1}$, then 
 $d_{\underline{i}}(a_1\cdots a_{r+1})=0$.  Thus, the boundary map $d_r$ for $\I_n$ satisfies $d_r = \sum_{i=1}^n d_{\underline{i}}$.    As a simple example, 
 $d_{\underline{2}}(31425) = (-1)^{3}3145$ since $2$ appears in the 4-th position in 31425, whereas $d_{\underline{2}}(3145)=0$ since 2 does not appear at all in 3145.
\end{defn}

\begin{ex}\label{slightly-more-involved-example}
 Given the homology facet $\pi = 312$  in $\I_3^{321}$, 
 we obtain a  homology class $[z_{\pi }]$ by setting $z_{\pi } :=  312 + 132 - 213 - 231$.   

 One may check that $[z_{\pi }] $ a homology class by showing that $d_{\underline{i}}(z_{\pi })=0$ for each $i$.  This is done by  using the equations $d_{\underline{i}} (312) = -d_{\underline{i}}(132)$
 for $i=1,3$ and  $d_{\underline{i}}(-213) = -d_{\underline{i}}(-231)$ for $i=1,3$ to show that
 $d_{\underline{i}}(z_{\pi })=0$ 
 for $i=1,3$.  Then one may use 
 $d_{\underline{2}}(312) = -d_{\underline{2}} (-231) $ and $d_{\underline{2}}(132) = -d_{\underline{2}}(-213)$ to show that $d_{\underline{2}}(z_{\pi })=0$.
 Since $d(z_{\pi }) = \sum_i d_{\underline{i}}(z_{\pi }) = 0$, it follows that $z_{\pi }$ is a cycle, namely has boundary 0.
 \end{ex}

\subsection{Patterns whose largest letter is leftmost}

Let $\pi $ be any homology facet in $\I_n^{\sigma }$ for a pattern  $\sigma $ 
whose largest letter is leftmost.  Let $L(\pi )$ denote the set of positions of the left-to-right maxima in $\pi $.  Let $U(\pi ) = \{ j\in [n]| j\not\in L(\pi )\text{ and } j-1\not\in L(\pi )\} $.  In other words, $U(\pi )$ is the set of positions of letters that are neither left-to-right  
maxima nor immediate successors of left-to-right maxima.  

We make the convention in our examples of underlining each left-to-right maximum as well as its immediate successor in any homology facet $\pi $;  that is, we underline the letters in $\pi $ whose positions are not in $U(\pi )$.  For each value that is underlined in $\pi $, we also underline this same value in each permutation appearing in $z_{\pi }$.  For example, we underline the values  3 and 1 in each permutation appearing in $z_{312}$, hence we write $z_{312} = \underline{31}2 + \underline{13}2 - 2\underline{13} - 2\underline{31}$.

\begin{ex}
Consider the permutation $\pi = \underline{61}2\underline{73}4\underline{85}$.  In this case, the left-to-right maxima are the values $6,7,8$ which appear in positions $1,4,7$.  Thus, $L(\pi ) = \{ 1,4,7\}.$ 
In this case, $U(\pi ) = \{ 3,6 \} $.
\end{ex}

For each $\pi $ that is a homology facet in $\I_n^{\sigma }$ and each $T\subseteq U(\pi )$, denote by $\shift(T)$ the operator  which permutes the letters in $\pi $ 
as follows. The operator $\shift(T)$ proceeds  from left to right through the letters of $\pi $ whose indices are in $T$, shifting each such letter in turn leftward to the rightmost position which makes it a left-to-right maximum.  When we speak of the sign of $\shift(T)$, we are speaking of the sign of this permutation on positions of the letters in $\pi $.

\begin{ex}
Applying $\shift(T)$ to the permutation $\underline{41}2\underline{73}6\underline{85}$ for $T=\{ 3\} $ yields $$\shift(\{ 3\} )\pi = 2\underline{41}\underline{73}6\underline{85}.$$  Likewise, 
$\shift(\{ 6\} )\pi = \underline{41}26\underline{73}\underline{85}$ and $\shift(\{ 3,6\} (\pi ) = 2\underline{41}6\underline{73}\underline{85}$.

Applying $\shift(T)$ to the permutation 
$\underline{61}2\underline{73}4\underline{85}$ for $T=\{ 3,6 \}$  yields $24\underline{61}\underline{73}\underline{85}$.

As a more complicated example, consider the permutation 
$\pi = \underline{64}5\underline{98}1237$.  Then applying $\shift(\{ 6,7,8,9\} )$ to $\pi $ yields $123\underline{64}57\underline{98}$ whereas applying $\shift(\{ 7,8,9\})$ to $\pi $ yields $23\underline{64}57\underline{98}1$ and applying $\shift (\{ 6,7,9\} )$ to $\pi $  yields $12\underline{64}57\underline{98}3$.
\end{ex}

Next we state and justify three important  properties of $\shift(T)$. These properties  will be used in justifying that the permutations appearing in the proposed cycles  $z_{\pi }$ that we introduce shortly  are all $\sigma $-avoiding and that $d(z_{\pi })=0$. These will also help explain why the operator $\prod_{m\in M}(\pi_m,\pi_{m+1})$ introduced shortly  is indeed a well-defined and well-behaved operator.  

\begin{obs}\label{stay-maxima}
Let $\pi $ be a homology facet in $\I_n^{\sigma }$.
If $\pi_m$ is a left-to-right maximum in $\pi $, then $\pi_m$ is also a left-to-right maximum in $\shift(T)[\pi ]$ for each $T\subseteq U(\pi )$. 
\end{obs}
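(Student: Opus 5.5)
The plan is to understand exactly what $\shift(T)$ does, and then show that no shifted letter can land to the left of a left-to-right maximum $\pi_m$ while being larger than it. Since $\pi$ is a homology facet, $\pi\in\L_n(\sigma)$: every left-to-right maximum $\pi_m$ is immediately followed by a smaller letter $\pi_{m+1}$, so positions $m$ and $m+1$ are not in $U(\pi)$. Hence the letters moved by $\shift(T)$ are never left-to-right maxima of $\pi$. Nor are they immediate successors of left-to-right maxima. Each moved letter $\pi_t$ with $t\in T$ is therefore smaller than the most recent left-to-right maximum to its left in $\pi$.

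I would argue by induction on the steps of $\shift(T)$, processing the elements of $T$ from left to right. The inductive claim is that after each step the following three things hold: the left-to-right maxima of $\pi$ are still left-to-right maxima; they are still in the same relative order; and each is still immediately followed by its original successor. The key point concerns a step that moves a letter $x=\pi_t$ leftward to the rightmost position where it becomes a left-to-right maximum. By induction, every original left-to-right maximum $\pi_m$ lying to the left of $x$ is still a left-to-right maximum in the current word. We must check two cases.

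\begin{enumerate}
\item For $\pi_m<x$, the letter $x$ is inserted somewhere after $\pi_m$, or at least after all letters to which $\pi_m$ must compare. Then the set of letters preceding $\pi_m$ is unchanged, so $\pi_m$ stays a maximum.
\item For $\pi_m>x$, the letter $x$ is placed before the leftmost current letter exceeding it. Then $\pi_m$, which is at or after that position, gains only the predecessor $x<\pi_m$, so $\pi_m$ remains a left-to-right maximum.
\end{enumerate}

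Letters to the right of $x$'s original position are unaffected in their set of predecessors. Left-to-right maxima between the new and old positions of $x$ gain only the predecessor $x$, and every such letter that is passed is larger than $x$, because $x$ is inserted just before the first larger letter.

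The main obstacle is to make precise the phrase ``rightmost position which makes it a left-to-right maximum.'' I would first verify that this is exactly the position immediately before the leftmost letter of the current word that is larger than $x$. Then I would check that this position never falls between a left-to-right maximum $\pi_m$ and its successor $\pi_{m+1}$. This holds because $\pi_{m+1}<\pi_m$, so if $\pi_m>x$ the insertion point is at or before $\pi_m$. If instead $\pi_m<x$, the insertion point is beyond $\pi_m$. It then cannot be just after $\pi_m$ either, since $x$ would then precede $\pi_{m+1}$. That in turn forces $\pi_{m+1}>x$ with no larger letter earlier, so $\pi_{m+1}$ would be a left-to-right maximum of $\pi$ exceeding $\pi_m$, a contradiction. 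This last contradiction is the delicate point, and it is where the hypothesis that $\pi$ has no ephemeral records is used.
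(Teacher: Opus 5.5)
Your proof is correct and uses the same idea as the paper's one-line argument. Each shifted letter moves only to just before the leftmost larger letter, so every left-to-right maximum it passes is larger than it, and no original left-to-right maximum loses that status. Your additional check that the insertion point never separates $\pi_m$ from $\pi_{m+1}$ is really the content of Observation~\ref{stay-successor}. It is not needed for this statement, and neither is the no-ephemeral-records hypothesis, since letters indexed by $U(\pi)$ are never left-to-right maxima by definition.
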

\begin{proof}
Simply note that a letter $\pi_j\in U(\pi )$ will never be shifted leftward by the operator $\shift(T)$ past a left-to-right maximum that is smaller than it, due to each letter being shifted leftward the minimal amount necessary to make it a left-to-right maximum.
\end{proof}

\begin{obs}\label{stay-successor}
Let $\pi $ be a homology facet in $\I_n^{\sigma }$.  For each left-to-right maximum $\pi_m\in L(\pi )$, its immediate successor $\pi_{m+1}$ remains its immediate successor in $\shift(T)[\pi ]$ for each $T\subseteq U(\pi )$.
\end{obs}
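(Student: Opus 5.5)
The plan is to track what the operator $\shift(T)$ does to the consecutive pair $\pi_m\pi_{m+1}$, one shifted letter at a time. First I fix the setup. Since $\pi$ is a homology facet, Corollary~\ref{homology-facet-description} says $\pi\in\L_n(\sigma)$. So every left-to-right maximum $\pi_m$ has $m<n$ and $\pi_{m+1}<\pi_m$, and $\pi_{m+1}$ is not itself a left-to-right maximum. By definition of $U(\pi)$, the positions $m$ and $m+1$ are not in $U(\pi)$. Hence neither $\pi_m$ nor $\pi_{m+1}$ is ever moved by $\shift(T)$; only letters at positions in $T\subseteq U(\pi)$ move. The pair can therefore be separated only if some shifted letter is inserted exactly into the gap between $\pi_m$ and $\pi_{m+1}$.

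Next I analyze a single shift step. I proceed by induction along the left-to-right processing of $T$. Suppose the current word $w$ still has $\pi_m$ immediately followed by $\pi_{m+1}$. Suppose also that $\pi_m$ is still a left-to-right maximum of $w$, which holds by Observation~\ref{stay-maxima}. Let $a$ be the letter now being shifted. It moves leftward to the rightmost position at which it becomes a left-to-right maximum. Assume for contradiction that it lands between $\pi_m$ and $\pi_{m+1}$. Then $a$ would be a left-to-right maximum sitting right after $\pi_m$, which forces $a>\pi_m$. But then the position just before $\pi_m$ is no good candidate, and the position right after $\pi_{m+1}$ also works: the letters to the left of that slot are the prefix through $\pi_m$ together with $\pi_{m+1}<\pi_m<a$. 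So there is a valid position strictly further right, contradicting the choice of the rightmost such position. The one exception is when $a$ originally sits immediately after $\pi_{m+1}$ and does not move at all. In that case $a$ stays put and does not enter the gap either.

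A further point needs care: $a$ starts to the right of $\pi_{m+1}$, so it can only move left of the pair or stay to its right. For $a$ to land in the gap it must pass $\pi_{m+1}$ and stop. The argument above shows that stopping there is never the rightmost valid choice. It also shows $a$ does not need to go further left past $\pi_m$ unless $a<\pi_m$, and in that case $a$ moves entirely to the left of $\pi_m$. Either way adjacency is preserved. Iterating over all letters of $T$ gives the observation.

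The main obstacle I expect is verifying the "rightmost valid slot" comparison in the intermediate words. Earlier shifts may have inserted new left-to-right maxima to the left of $\pi_m$. These inserted letters are all smaller than or equal to what is needed for $\pi_m$ to remain a maximum (again by Observation~\ref{stay-maxima}), so the prefix maximum just before the gap is still $\pi_m$. I would state this explicitly as part of the inductive hypothesis.
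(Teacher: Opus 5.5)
Your proof is correct and uses essentially the same argument as the paper. A shifted letter that stopped between $\pi_m$ and $\pi_{m+1}$ would have to exceed $\pi_m$, and hence also $\pi_{m+1}$ since $\pi_m>\pi_{m+1}$ in a homology facet. So it would never have needed to pass $\pi_{m+1}$; the paper phrases this as the impossible requirement $\pi_m<\pi_j<\pi_{m+1}$.

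Your explicit induction over the intermediate words $\shift(T')[\pi]$ just makes precise a step the paper leaves implicit. Here $T'$ is an initial segment of $T$, so Observation~\ref{stay-maxima} applies to each intermediate word. The ``exception'' you mention is vacuous: the letters to the left of a letter in a $U(\pi)$-position are the same as in $\pi$, so that letter is never already a left-to-right maximum and always moves.
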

\begin{proof}
Note that any letter $\pi_j\in U(\pi )$ that would be shifted leftward past $\pi_{m+1}$ but not past $\pi_m$ would necessarily satisfy $\pi_m < \pi_j < \pi_{m+1}$.  But having $\pi_m < \pi_{m+1}$ for $\pi_m\in L(\pi )$ contradicts the property of a homology facet $\pi $ that it  must  have  $\pi_m > \pi_{m+1}$ for each left-to-right maximum  $\pi_m$ in $ \pi $. 
\end{proof}

The next property of our shifting map will be used to prove that our upcoming proposed cycles $[z_{\pi }]$ introduced in Definition~\ref{def:homology-cycle} are indeed cycles (i.e. have 0 boundary).

\begin{obs}\label{toughest-observation-to-prove}
If $S\subseteq U(\pi )$ satisfies $S = T \cup \{ i\} $ for some $i\not\in T$,
then $\shift(S)[\pi ]$ may be obtained from $\shift(T)[\pi ]$ by shifting the letter $\pi_{i}$ leftward from its position in $\shift(T)[\pi ]$ by the minimal amount necessary to make it a left-to-right maximum. 
\end{obs}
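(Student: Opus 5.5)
The plan is to compare the two procedures letter by letter. Both $\shift(S)[\pi]$ and $\shift(T)[\pi]$ process the letters indexed by $S$ from left to right, shifting each leftward by the minimal amount needed to become a left-to-right maximum. The only difference is that in computing $\shift(T)$ the letter $\pi_i$ is left in place. So we want to show that the relative effect of moving $\pi_i$ commutes with the moves of the other letters, in the following sense. Let $T_{<}=\{t\in T: t<i\}$ and $T_{>}=\{t\in T:t>i\}$. Up through the processing of $T_{<}$, the two procedures agree. At the moment $\pi_i$ is processed in $\shift(S)$, it moves to the position just after the rightmost letter to its left that exceeds it, i.e.\ just before the block of letters smaller than it.

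The key steps are as follows.

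\textbf{Step 1.} Show that the target position of each subsequent letter $\pi_t$, $t\in T_{>}$, is unaffected by whether $\pi_i$ has been moved. The letters of $U(\pi)$ that have not yet been processed stay in their original relative order with respect to the left-to-right maxima and their successors, by Observations~\ref{stay-maxima} and~\ref{stay-successor}. Hence the relevant structure to the left of $\pi_t$ is the same sequence of left-to-right maxima in both cases, plus possibly $\pi_i$ at a different location. We must show that $\pi_t$ stops at the same place relative to all letters other than $\pi_i$.

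\textbf{Step 2.} Do a case analysis on $\pi_i$ versus $\pi_t$. If $\pi_i<\pi_t$, then in either configuration $\pi_i$ is left of $\pi_t$ and is not a letter exceeding $\pi_t$. So it does not influence where $\pi_t$ stops, other than possibly the side of $\pi_t$ on which it sits. We argue that $\pi_t$ never passes $\pi_i$ in either procedure: after its shift $\pi_i$ is a left-to-right maximum smaller than $\pi_t$, and $\pi_t$ stops right after the last letter larger than it, which lies to the left of $\pi_i$'s new spot or coincides with the same left-to-right maximum boundary. If $\pi_i>\pi_t$, then $\pi_i$ exceeds $\pi_t$, and $\pi_t$ stops immediately after the rightmost larger letter. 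If that letter is $\pi_i$, we must show the resulting order matches what happens when $\pi_i$ is moved afterward, and this is the subtle case. The letter $\pi_i$ moves leftward past only letters smaller than it, so moving $\pi_i$ later past $\pi_t$ restores the same final arrangement.

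\textbf{Step 3.} Conclude by induction on $|T_{>}|$ that $\shift(S)[\pi]$ equals the word obtained from $\shift(T)[\pi]$ by moving $\pi_i$ leftward to the rightmost position making it a left-to-right maximum. This works because at each stage the words differ exactly by the position of $\pi_i$, and the position of $\pi_i$ in $\shift(S)$ is the minimal-shift position relative to the current word.

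The main obstacle is Step 2 in the case $\pi_i>\pi_t$ with $t>i$. There the order of operations could in principle place $\pi_t$ on the wrong side of $\pi_i$, or change which letters $\pi_i$ must pass. My first attempt would be to formalize the invariant that, in every intermediate word, the letters between any left-to-right maximum $\pi_m$ ($m\in L(\pi)$) and the next one are exactly its successor followed by some unprocessed or shifted letters, all smaller than $\pi_m$. This uses the homology-facet property $\pi_m>\pi_{m+1}$. With that invariant, "rightmost position making it a left-to-right maximum" becomes "immediately before the first left-to-right maximum larger than it, or after the last shifted letter smaller than it." That description is visibly independent of processing order.
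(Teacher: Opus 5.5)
There is a genuine gap, and it is in Step 2, where all the content lies. Your skeleton matches the paper's proof of Lemma~\ref{stay-consistent}: treat the later letters of $T$ one at a time and split on $\pi_t<\pi_i$ versus $\pi_t>\pi_i$. But Step 2 rests on a misreading of the shift.

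You say a letter moves to just after the rightmost larger letter on its left. That larger letter is then still to its left, so the letter is not a left-to-right maximum. The correct rule is that a non-maximal letter $x$ is reinserted immediately before the \emph{leftmost} letter of the current word exceeding $x$. As a result, the specific claims of Step 2 fail.

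\begin{itemize}
\item Take $\pi=51234$, a homology facet of $\I_5^{321}$ with $U(\pi)=\{3,4,5\}$, and let $i=3$, $T=\{4\}$. Then $\shift(T)[\pi]=35124$, so $\pi_t=3$ does pass $\pi_i=2$ in the $\shift(T)$ procedure.
\item The assertion that $\pi_i$ moves past only smaller letters is false. It always passes the larger letter in front of which it lands.
\item Your proposed invariant is also false. Take the homology facet $\pi=6129378$ of $\I_7^{321}$; here $6\in U(\pi)$ and $\shift(\{6\})[\pi]=6127938$. The shifted letter $7>6$ now sits between the original maxima $6$ and $9$.
\item The correct half of your final description, ``immediately before the first larger left-to-right maximum,'' is not visibly order-independent. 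Suppose $\pi_i>\pi_t$ and both have the same leftmost larger letter $a$. Then in $\shift(S)$ the letter $\pi_t$ lands in front of $\pi_i$, not in front of $a$. This is exactly the case that has to be checked.
\end{itemize}

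The repair is short and uses nothing about homology facets. Let $w$ be a word in which the distinct letters $x,y$ are not left-to-right maxima. Let $a$ and $b$ be the leftmost letters of $w$ exceeding $x$ and $y$ respectively; one checks that $a\neq y$ and $b\neq x$.

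\begin{itemize}
\item If $a\neq b$, the first move does not change the leftmost letter exceeding the second mover. So both orders put $x$ immediately before $a$ and $y$ immediately before $b$.
\item If $a=b$, both orders produce the consecutive block $x\,y\,a$ when $x<y$, and $y\,x\,a$ when $x>y$.
\end{itemize}

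Now note that $i,t\notin L(\pi)$. A leftward move of one letter only enlarges the set of letters preceding each other letter. Hence $\pi_i$ and every later $\pi_t$ remain non-maxima at every stage. You can therefore commute the move of $\pi_i$ past the later moves one at a time. This shows that $\shift(S)[\pi]$ is $\shift(T)[\pi]$ followed by the minimal leftward shift of $\pi_i$, which is the statement.
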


\begin{proof}
This is proven in Lemma~\ref{stay-consistent}.
\end{proof}

For each $M\subseteq L(\pi )$, denote by $\prod_{m\in M}(\pi_m,\pi_{m+1})$ the permutation which applies  the adjacent transpositions $(\pi_m,\pi_{m+1})$ for $m\in M$ swapping the values $\pi_m$ and $\pi_{m+1}$ where $\pi_m$ is one of the left-to-right maxima in $\pi $  and $\pi_{m+1}$ is its immediate successor in $\pi $.  By Observation~\ref{stay-successor}, $\pi_{m+1}$ is the immediate successor of $\pi_m$ within $\shift(T)[\pi ]$ for each $m\in M$, ensuring that this operator $\prod_{m\in M}(\pi_m,\pi_{m+1})$ acts as  a product of adjacent transpositions on positions which all commute with each other.

\begin{ex}
Consider the homology facet  $\pi = \underline{31}2$.
The upcoming formula for $z_{\pi }$ in Definition~\ref{def:homology-cycle} will yield the
the cycle
 $z_{312} = \underline{31}2 + \underline{13}2 - 2\underline{31} - 2\underline{13}$.  This is obtained by  the following two step process.  First we replace $\underline{31}2$ with $\underline{31}2 +(-1)^{1+2} 2\underline{31}$, thereby taking a signed sum over choices of $T\subseteq U(\pi )$, applying the permutation 
 $\shift(T)$ to $\pi $ for each choice of $T$ and assigning the  coefficient $(-1)^{|T|}\sgn (\shift(T))$ to this summand.
 In the second step, we  replace $\underline{31}2$ by 
 $\underline{31}2+\underline{13}2$, and we replace $-2\underline{31}$ by $-(2\underline{31}+2\underline{13})$, thereby applying the operator $\prod_{m\in M}(\pi_m,\pi_{m+1})$ for each choice of $M\subseteq L(\pi )$ to the signed sum resulting from the first step.
 \end{ex}

Now  we formally define  the elements of our homology basis for the top homology of $\I_n^{\sigma }$ for any pattern $\sigma $ whose largest letter is leftmost.

\begin{defn}\label{def:homology-cycle}
Let $\sigma$ be a pattern that starts with its largest letter.  
We define a  homology class $[z_{\pi }]$ associated  to any  homology facet $\pi $ in $\I_n^{\sigma }$ as follows.  Set
$$z_{\pi } = \sum_{M\subset L(\pi )}(\prod_{m\in M}(\pi_m,\pi_{m+1}))
\sum_{T\subset U(\pi )} ((-1)^{|T|}\cdot \sgn(\shift(T)))(\shift(T)[\pi ] ) ,$$ letting $\sgn$ denote the usual  sign function on permutations.   
\end{defn}

Next is a more involved example, meant to give some intuition both for the construction and for the fact (to be justified shortly) that applying the boundary map to any such $z_{\pi }$ indeed yields 0.

 \begin{ex} 
The $321$-avoiding permutation $\underline{61}2\underline{73}4\underline{85}$ includes the underlined pairs of letters  $(6,1), (7,3)$ and $(8,5)$ consisting of the left-to-right maxima and their immediate successors.  Note that this permutation $\pi = 61273485$ is a homology facet in $\I_8^{321}$.  Let us show how to construct the homology class $[z_{\pi }]$ associated to it.   For this permutation $\pi $, we  have $U(\pi ) = \{ 3,6\} $ 
and $L(\pi ) = \{ 1,4,7\}$.

First we calculate the inner alternating  sum for $M= \emptyset $ in the formula for $z_{\pi }$.  To that end, we  take an  alternating sum of permutations obtained from $\pi $ by applying the map
$\shift(T)$ for the various choices of set $T\subseteq U(\pi )$ of unmatched letters.  This yields:
$$\underline{61}2\underline{73}4\underline{85} - (-1)^2 2\underline{61}\underline{73}4\underline{85} - (-1)^5 4\underline{61}2\underline{73}\underline{85} + (-1)^2(-1)^4 24\underline{61}\underline{73}\underline{85}$$
with summands given by $T=\emptyset $, $T=\{ 3\} $, $T= \{ 6\} $ and $T=\{ 3,6\} $, respectively.  

Next we turn to the outer sum.  To obtain the full expression, we  replace each summand from step one  by a sum over all permutations obtained from it by
choosing each possible  subset $M$  of the original set of left-to-right maxima and swapping each of those belonging  to $M$  with its  immediate successor.  For example, this replaces 
$\underline{61}2\underline{73}4\underline{85}$
with the sum
$$ \underline{61}2\underline{73}4\underline{85} + \underline{16}2\underline{73}4\underline{85} + \underline{61}2\underline{37}4\underline{85} + \underline{16}2\underline{37}4\underline{85} + \underline{61}2\underline{73}4\underline{58} + \underline{16}2\underline{73}4\underline{58} + \underline{61}2\underline{37}4\underline{58} + \underline{16}2\underline{37}4\underline{58},$$ given by the $2^3$ choices of sets $M$.  
Likewise
$-2\underline{6173}4\underline{85}$ is replaced  with the sum
$$-(2\underline{61}\underline{73}4\underline{85} + 2\underline{16}\underline{73}4\underline{85} + 2\underline{61}\underline{37}4\underline{85} + 2\underline{16}\underline{37}4\underline{85} + 2\underline{61}\underline{73}4\underline{58} + 2\underline{16}\underline{73}4\underline{58} + 2\underline{61}\underline{37}4\underline{58} + 2\underline{16}\underline{37}4\underline{58}  ).$$  Altogether there are $4\cdot 8 = 32$ signed summands in $z_{\pi }$.

One may  check for $i=1,2,\dots ,8$  
that the component $d_{\underline{i}}$ of the boundary deleting the letter $i$ from each summand in $z_{\pi }$ with appropriate sign  
applied to $z_{\pi }$  yields 0.
Checking this for $i=1,2,\dots ,8$  shows that $z_{\pi }$ is a cycle.  The fact that these partial boundaries are 0 comes from the pairwise cancellation of terms in the boundary, as justified in Lemmas~\ref{cancel-max-and-successor} and~\ref{cancel-non-max}.

As one example of a pair of summands cancelling, notice that
$$d_{\underline{2}}(\underline{16}2\underline{73}4\underline{85} - 2\underline{16}\underline{73}4\underline{85}) = (-1)^2(1673485 - (-1)^0(1673485) = 0.$$
Another example, again using the same term $\underline{16}2\underline{73}4\underline{85}$, but now paired with a different cancellation  partner is
$$d_{\underline{4}}(\underline{16}2\underline{73}4\underline{85} + 4\underline{16}2\underline{73}\underline{85}) = (-1)^5 1627385 + (-1)^0 1627385 = 0.$$  This pairing of summands to be cancelled depends on the value $i$ in $d_{\underline{i}}$ and  is explained and justified  in Lemmas~\ref{stay-consistent} and~\ref{cancel-non-max} below.
\end{ex}

First we verify that all of the permutations appearing as summands in our formula for $z_{\pi }$ in Definition~\ref{def:homology-cycle} are $\sigma $-avoiding permutations.  This is accomplished in the next two lemmas.  

\begin{lem}\label{well-defined-shifted-permutation}
If $\pi $ is $\sigma $-avoiding, then $\shift(T)[\pi ]$ is also 
$\sigma $-avoiding for each $T\subseteq U(\pi )$.
\end{lem}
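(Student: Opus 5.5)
We proceed by induction on $|T|$, shifting one letter at a time as in the definition of $\shift(T)$. The key tool is the argument already used in Lemma~\ref{lem:blockers} and Proposition~\ref{leftmost-pure}. If a word avoids $\sigma$ (which starts with its largest letter $k$), and we move a letter $a$ leftward to the rightmost position that makes it a left-to-right maximum, then the result still avoids $\sigma$. Thus it suffices to prove one single-step claim. Let $\rho$ be any $\sigma$-avoiding permutation, let $a=\rho_j$, and let $\rho'$ be obtained by moving $a$ leftward to the position immediately before the leftmost entry of $\rho$ larger than $a$. (If $a$ is already a left-to-right maximum, then $\rho'=\rho$.) We claim that $\rho'$ avoids $\sigma$.

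To prove the claim, suppose $\rho'$ contains an occurrence of $\sigma$. Since deleting $a$ from $\rho'$ gives $\rho$ with $a$ deleted, which avoids $\sigma$, the occurrence must use $a$. In $\rho'$ every letter to the left of $a$ is smaller than $a$, and $\sigma$ starts with its largest letter, so $a$ must be the first entry of the occurrence. Let $b$ be the entry immediately to the right of $a$ in $\rho'$; this is the leftmost entry of $\rho$ exceeding $a$. The occurrence cannot use $b$, since $b>a$ would have to be smaller than the first entry. Moreover, $b$ lies weakly left of all other entries of the occurrence in $\rho$. This holds because every entry of the occurrence other than $a$ lies to the right of $a$'s new position. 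The only entries strictly between $a$'s new position and $b$ are none, so those entries all lie to the right of $b$ in $\rho$; the exception would be $a$ itself, which is excluded. Replacing $a$ by $b$ therefore gives a subsequence of $\rho$ with first entry $b>a$ and the remaining entries unchanged. This subsequence is order-isomorphic to $\sigma$, because $b$ is still larger than every other entry used. That contradicts the assumption that $\rho$ avoids $\sigma$.

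With the claim in hand, the lemma follows. Write $T=\{t_1<\dots<t_s\}$ and let $\pi^{(0)}=\pi$. Let $\pi^{(r)}$ be obtained from $\pi^{(r-1)}$ by moving the letter originally at position $t_r$ leftward to the rightmost position making it a left-to-right maximum. By the definition of $\shift(T)$, $\shift(T)[\pi]=\pi^{(s)}$. Each step is exactly the move in the claim applied to the current permutation $\pi^{(r-1)}$, which avoids $\sigma$ by induction. Hence $\pi^{(s)}$ avoids $\sigma$.

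The only potential obstacle is making sure the move in the definition of $\shift(T)$ coincides with the move in the claim, that is, "the rightmost position which makes it a left-to-right maximum" is immediately before the leftmost larger entry. This holds because placing the letter anywhere to the right of that larger entry fails to make it a left-to-right maximum, while placing it immediately before that entry does make it one. The claim is stated for an arbitrary $\sigma$-avoiding permutation, not only for homology facets, so no appeal to Observation~\ref{toughest-observation-to-prove} is needed.
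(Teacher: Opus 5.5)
Your proof is correct, but it is organized differently from the paper's. You isolate a one-step invariant and iterate it along the sequence of moves that defines $\shift(T)$. The invariant is that moving one letter of an arbitrary $\sigma$-avoiding word to the rightmost position making it a left-to-right maximum preserves $\sigma$-avoidance; this is exactly the argument of case (a) in Lemma~\ref{lem:blockers}.

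The paper instead argues once, on the final word $\pi'=\shift(T)[\pi]$. It first checks that shifting preserves the original left-to-right maxima and their immediate successors. It then asserts that every shifted letter is a left-to-right maximum of $\pi'$ immediately followed by a larger letter. Finally, it takes an occurrence of $\sigma$ in $\pi'$ whose first letter is as far right as possible. That first letter must be a shifted letter, and replacing it by its larger right neighbor gives an occurrence starting further right, a contradiction.

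Your route is more modular and more general. It uses neither the homology-facet property of $\pi$ nor $T\subseteq U(\pi)$. It also avoids checking that later shifts do not destroy the left-to-right-maximum status, or the larger immediate successor, of letters shifted earlier. The paper invokes these facts ``by construction'' without examining how later insertions interact with them.

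The paper's global approach, in turn, establishes structural facts about $\shift(T)[\pi]$ that are needed later. In particular, the proof of Lemma~\ref{cancel-max-and-successor} cites this lemma for the fact that $\pi_m$ and $\pi_{m+1}$ remain adjacent in every summand of $z_\pi$. Your argument does not supply that, though it is not part of the statement being proved.
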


\begin{proof}
First we show that this leftward shifting  process carried out by $\shift(T)$ 
preserves the properties (1)  that each of the  original left-to-right maxima $\pi_m$ remains a left-to-right maxima, and (2) that each such $\pi_m$ still has the same immediate successor $\pi_{m+1}$.  This all follows from the fact that each of the letters  $t\in T$ gets shifted to the left of all of the original left-to-right maxima to its left which are larger than it, but will not get shifted to the left of any of the original  left-to-right maxima $\pi_{\max }$ to its left that are smaller than it, and additionally  will not get shifted to the left of the immediate successor of any $\pi_{\max } $ remaining  to its left (by virtue of $t$ getting shifted to the rightmost position with the property that $t$  becomes a left-to-right maximum, which implies once it is  in this position that its immediate successor will be larger than it and will itself be a left-to-right maxima.

Next  we use  properties (1) and (2) to deduce that the end result of this shifting  process is a $\sigma $-avoiding permutation.  
If the resulting permutation  $\pi '\coloneqq  \shift(T)(\pi )$ did include  a pattern $\sigma $, the occurrence of this pattern would necessarily have to  use one or more  of the shifted letters.  Since these shifted letters  are all left-to-right maxima after shifting, they may only appear as  the leftmost letter in the pattern $\sigma $.  Suppose there were such a pattern $\sigma $ appearing in $\pi '$.  Consider the rightmost copy of the forbidden pattern $\sigma $ within $\pi '$, more specifically, consider one with its leftmost letter as far right as possible.  By construction, 
each of the letters in $T$, namely each of the letters shifted leftward to obtain $\pi '$ from $\pi $,  is immediately succeeded by another left-to-right maximum in $\pi '$.  But  the leftmost letter in the pattern must be in $T$, since the pattern was not present in $\pi $.  This leftmost letter has a still larger letter to its immediate right which may replace it as the leftmost letter in another copy of the pattern $\sigma $ within $\pi '$.  This contradicts our choice of rightmost pattern within $\pi '$.  
\end{proof}

Having now shown that all of the permutations obtained in step 1, namely the leftward shifting process, are $\sigma $-avoiding, we next show that step 2 also preserves the property of being $\sigma $-avoiding.

\begin{lem}
If $\shift(T)[\pi ]$ is $\sigma $-avoiding  for a given $T\subseteq U(\pi )$, then 
$$(\prod_{m\in M}(\pi_m,\pi_{m+1}))(\shift(T)[\pi])$$ is also 
$\sigma $-avoiding for each $M\subseteq L(\pi )$.
\end{lem}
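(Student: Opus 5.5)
Write $\pi' = \shift(T)[\pi]$ and $\pi'' = (\prod_{m\in M}(\pi_m,\pi_{m+1}))(\pi')$. By Observations~\ref{stay-maxima} and~\ref{stay-successor}, together with the properties (1) and (2) established in the proof of Lemma~\ref{well-defined-shifted-permutation}, each original left-to-right maximum $\pi_m$ ($m\in L(\pi)$) is still a left-to-right maximum of $\pi'$, and its immediate successor is still $\pi_{m+1}<\pi_m$. So $\pi''$ arises from $\pi'$ by swapping some disjoint adjacent descending pairs $(a,b)=(\pi_m,\pi_{m+1})$, where $a$ is a left-to-right maximum of $\pi'$. The plan is to handle one swap at a time, by induction on $|M|$. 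At each stage the current word is $\sigma$-avoiding, and each pair not yet swapped is still adjacent with its larger letter a left-to-right maximum. Swapping other disjoint adjacent pairs never moves a letter past $a$, so the set of letters to the left of $a$ is unchanged.

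Fix one such pair $ab$ in a $\sigma$-avoiding word $w$, and let $w^\ast$ be the result of swapping it to $ba$. Suppose $w^\ast$ contains an occurrence of $\sigma$. Every pair of letters other than $\{a,b\}$ keeps its relative position, so the occurrence must use both $a$ and $b$, with $b$ before $a$. Since $b<a$, in the occurrence $b$ precedes a larger letter. Hence $b$ is not the first letter of the occurrence: that letter plays the role of $k=\sigma_1$ and must exceed every later letter of the occurrence. So the occurrence has a first letter $c$ lying before $b$, and $c$ is larger than both $b$ and $a$.

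In $w^\ast$, the letters to the left of $b$ are exactly the letters to the left of $a$ in $w$. Because $a$ is a left-to-right maximum of $w$, all of these letters are smaller than $a$, so $c<a$. This contradicts $c>a$. Therefore $w^\ast$ avoids $\sigma$. Iterating over $m\in M$ (the swaps commute) shows that $\pi''$ is $\sigma$-avoiding.

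The only delicate point is justifying that the hypotheses persist after earlier swaps and after $\shift(T)$, that is, that $\pi_m$ remains a left-to-right maximum with its same successor at every stage. This is exactly the content of the cited observations, together with the disjointness of the pairs. One also needs $\pi_m\ne\pi_{m'+1}$ for $m,m'\in L(\pi)$: the successor of a left-to-right maximum in a homology facet is smaller than it, so it is not itself a left-to-right maximum.
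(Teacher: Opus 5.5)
Your proof is correct and takes essentially the same route as the paper. Any new occurrence of $\sigma$ must use both $\pi_m$ and $\pi_{m+1}$. Since $\pi_m$ is still a left-to-right maximum after the swap, it could only play the role of the first (largest) letter, which contradicts $\pi_{m+1}$ preceding it in the occurrence. Your one-swap-at-a-time induction and the explicit disjointness check are just more careful bookkeeping of what the paper does for all $m\in M$ at once.
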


\begin{proof}
Otherwise some letter $\pi_m$ along with its successor $\pi_{m+1}$ must both appear in an appearance of the pattern $\sigma $ once $\pi_{m+1}$ is moved to the left of $\pi_m$.  But $\pi_{m}$ is still a left-to-right maximum even after it is moved to the right of $\pi_{m+1}$, which means it may only appear as the largest (and leftmost) letter in the pattern $\sigma $.  This contradicts $\pi_{m+1}$, which now appears to its left,  also appearing  in the same  pattern $\sigma $. 
\end{proof}

Combining  these two lemmas yields  the following:

\begin{prop}
For $\pi $ any homology facet of $\I_n^{\sigma }$, each permutation in the support of $z_{\pi }$ is 
$\sigma $-avoiding.
\end{prop}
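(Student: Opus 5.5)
The plan is to combine the two preceding lemmas directly. By Definition~\ref{def:homology-cycle}, every permutation in the support of $z_{\pi}$ has the form
$$\Bigl(\prod_{m\in M}(\pi_m,\pi_{m+1})\Bigr)\bigl(\shift(T)[\pi]\bigr)$$
for some $T\subseteq U(\pi)$ and some $M\subseteq L(\pi)$. Cancellation among the signed terms can only shrink the support, so it suffices to show that every such permutation avoids $\sigma$, regardless of its coefficient.

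The argument proceeds in two steps. First, since $\pi$ is a homology facet of $\I_n^{\sigma}$, it is in particular a facet, hence a $\sigma$-avoiding permutation in $\S_n$ (by purity, Proposition~\ref{leftmost-pure}). Lemma~\ref{well-defined-shifted-permutation} then gives that $\shift(T)[\pi]$ is $\sigma$-avoiding for every $T\subseteq U(\pi)$. Second, the subsequent lemma applies to each such $\shift(T)[\pi]$ and shows that applying the commuting adjacent transpositions $\prod_{m\in M}(\pi_m,\pi_{m+1})$ preserves $\sigma$-avoidance for every $M\subseteq L(\pi)$.

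The one point needing care is well-definedness. The transpositions are specified in terms of the values $\pi_m,\pi_{m+1}$ from the original $\pi$, so we need these values to still sit in adjacent positions in $\shift(T)[\pi]$, with $\pi_m$ still a left-to-right maximum. This is exactly what Observations~\ref{stay-maxima} and~\ref{stay-successor} guarantee, and it is the hypothesis implicitly used in the second lemma. Once this is noted, the proposition follows with no real obstacle, since it is a direct concatenation of the two lemmas.
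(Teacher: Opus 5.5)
Your proposal is correct and matches the paper, which also proves this proposition simply by combining Lemma~\ref{well-defined-shifted-permutation} with the subsequent lemma showing that the transpositions $\prod_{m\in M}(\pi_m,\pi_{m+1})$ preserve $\sigma$-avoidance. You also make explicit two points the paper leaves implicit: that cancellation can only shrink the support, and that Observations~\ref{stay-maxima} and~\ref{stay-successor} make the transposition operator well defined.
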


Next we prove that each such  $z_{\pi }$ has boundary 0.  To this end, we decompose the boundary as $d=\sum_{i=1}^n d_{\underline{i}}$ where the operator $d_{\underline{i}}$  applied to a permutation $\tau $ deletes the letter $i$ from whatever position $j$ it appears in $\tau $ with coefficient $(-1)^{j-1}$.  It will suffice to prove that $d_{\underline{i}}(z_{\pi })=0$ for each $i\in \{ 1,2,\dots ,n\}$ to deduce $d(z_{\pi })=0$.

First we show that $d_{\underline{i}}(z_{\pi })=0$ for each letter $i$ that is either a left to right maximum or its immediate successor in $\pi $.

\begin{lem}\label{cancel-max-and-successor}
Let $\sigma$ be a pattern that starts with its largest letter. 
Let $\pi $ be a homology facet in $\I_n^{\sigma }$.
Let $j$ be a letter that is either a left-to-right maximum in $\pi $ or the immediate successor of some left-to-right maximum in $\pi $.  Then 
$d_{\underline{j}}(z_{\pi })=0$.
\end{lem}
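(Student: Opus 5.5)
The plan is to pair off the terms of $z_{\pi}$ through the transposition factor attached to the pair containing $j$. Since $j$ is either a left-to-right maximum $\pi_m$ or its immediate successor $\pi_{m+1}$, fix the unique $m\in L(\pi)$ with $j\in\{\pi_m,\pi_{m+1}\}$. Write
$$z_{\pi}=\sum_{M\subseteq L(\pi)\setminus\{m\}} \Bigl(\prod_{m'\in M}(\pi_{m'},\pi_{m'+1})\Bigr)\bigl(1+(\pi_m,\pi_{m+1})\bigr) w,$$
where $w=\sum_{T\subseteq U(\pi)}(-1)^{|T|}\sgn(\shift(T))\,\shift(T)[\pi]$. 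This uses that all the transpositions commute and act on disjoint adjacent position pairs, which follows from Observation~\ref{stay-successor}. It then suffices to show that $d_{\underline{j}}$ kills $\tau+(\pi_m,\pi_{m+1})\tau$ for every permutation $\tau$ occurring in these sums.

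Fix such a $\tau$, namely $\tau=\bigl(\prod_{m'\in M}(\pi_{m'},\pi_{m'+1})\bigr)\shift(T)[\pi]$. By Observations~\ref{stay-maxima} and~\ref{stay-successor}, the letters $\pi_m$ and $\pi_{m+1}$ occupy two adjacent positions $p,p+1$ in $\tau$. The other transpositions in $M$ act on disjoint position pairs, so they do not disturb this adjacency. Let $\tau'$ be $\tau$ with these two adjacent letters swapped.

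Deleting the letter $j$ from $\tau$ and from $\tau'$ produces the same injective word. In one of them $j$ sits at position $p$, and in the other at position $p+1$. Hence the signs $(-1)^{p-1}$ and $(-1)^{p}$ are opposite, and $d_{\underline{j}}(\tau)+d_{\underline{j}}(\tau')=0$. Both terms carry the same coefficient $(-1)^{|T|}\sgn(\shift(T))$ in $z_{\pi}$, since the transposition factors contribute coefficient $+1$. The involution $M\mapsto M\triangle\{m\}$ therefore pairs the summands of $z_{\pi}$ bijectively, with equal coefficients and cancelling $d_{\underline{j}}$-images, and summing gives $d_{\underline{j}}(z_{\pi})=0$.

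The main point to verify carefully is well-definedness. The swap of $\pi_m$ and $\pi_{m+1}$ must be a swap of adjacent positions in every summand $\shift(T)[\pi]$, including after other swaps have been applied. This is exactly Observation~\ref{stay-successor}, together with the disjointness of the pairs $\{m',m'+1\}$ for distinct $m'\in L(\pi)$. That disjointness holds because $\pi$ is a homology facet, so no successor $\pi_{m'+1}$ is itself a left-to-right maximum. A second check is that the pairing does not collapse distinct summands into the same word. I expect the summands indexed by distinct pairs $(M,T)$ to be distinct permutations. Even if some coincided, the cancellation is linear, so it holds summand by summand without needing this.
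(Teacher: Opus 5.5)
Your proposal is correct and is essentially the paper's argument. You pair each summand with the one obtained by swapping the adjacent values $\pi_m,\pi_{m+1}$ (i.e.\ toggling $m$ in $M$), note that both carry the same coefficient $(-1)^{|T|}\sgn(\shift(T))$, and observe that deleting $j$ from positions $p$ and $p+1$ gives the same word with opposite signs; setting up the pairing as an involution on the index pairs $(M,T)$, and noting that the pairs $\{\pi_{m'},\pi_{m'+1}\}$ are disjoint, is slightly more careful than the paper's word-level pairing but not a different route.
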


\begin{proof}
Let $j$ be 
either a left-to-right maximum  in 
$\pi $, denoted $\pi_i$
or its immediate successor, denoted $\pi_{i+1}$.  For any injective word $\pi '$ appearing in $z_{\pi }$ with  $\pi_i$ to the immediate left (resp. right)  of   $\pi_{i+1}$, there is another injective word in $z_{\pi }$ having  the same sign as $\pi '$ in $z_{\pi }$ which is obtained from $\pi '$ by the adjacent transposition swapping the  values  $\pi_i$ and $\pi_{i+1}$.   Moreover, we showed in Lemma ~\ref{well-defined-shifted-permutation}  that all injective words in $z_{\pi }$ have these two letters, $\pi_i$ and $\pi_{i+1}$, in consecutive positions, implying all summands in $z_{\pi }$ are paired up in this manner.  
Applying 
$d_{\underline{i}}$ to these two injective words of length $n$  yields the same injective word of length $n-1$ in both cases, but with   opposite signs since in one case a letter is deleted from some position $l$ while in the other word the same letter is instead deleted from position $l+1$.  Since these two injective words had the same sign as summands in $z_{\pi }$, they cancel in $d_{\underline{i}}(z_{\pi })$, yielding the result by cancelling all of the terms in pairs in this manner.  
\end{proof}

Our pairing of summands to be cancelled in order to prove that $d_{\underline{j}}(z_{\pi })=0$ for homology facets $\pi $  for $j$ which is neither a left-to-right maximum nor its immediate successor in $\pi $ 
will  rely upon the next result.

\begin{lem}\label{stay-consistent}
If $S\subseteq U(\pi )$ 
satisfies $S = T \cup \{ i\} $ for some $i\not\in T$, then $\shift(S)[\pi ]$ may be obtained from $\shift(T)[\pi ]$ by shifting the letter $\pi_{i}$ leftward from its position in $\shift(T)[\pi ]$ by the minimal amount necessary to make it a left-to-right maximum. 
Thus, 
$$d_{\underline{\pi_i}}(\shift(T)[\pi ]) = \pm d_{\underline{\pi_i}}(\shift(S))[\pi ]).$$
\end{lem}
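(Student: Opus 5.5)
The plan is to understand precisely where each shifted letter lands, and then compare the two processes letter by letter. Write $T=\{t_1<\dots<t_r\}$ and let $i$ lie between $t_a$ and $t_{a+1}$ (as positions in $\pi$). The first step is an invariant for the whole shifting process, proved by induction on the number of letters already shifted:
\begin{enumerate}[(1)]
\item every original left-to-right maximum stays a left-to-right maximum, and its successor stays its successor;
\item each shifted letter $\pi_t$ ends up immediately before the leftmost original left-to-right maximum $\pi_m$ with $m<t$ and $\pi_m>\pi_t$.
\end{enumerate}
Property (1) is essentially Observations~\ref{stay-maxima} and~\ref{stay-successor}. For (2), I would show that the rightmost position making $\pi_t$ a record is directly before that $\pi_m$, using only (1) and the fact that earlier shifted letters are now records. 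Such a $\pi_m$ exists because $t\in U(\pi)$ means $\pi_t$ is not a record in $\pi$. The point is that the target position of $\pi_t$ is determined by the original records alone: it is just before the first original record exceeding $\pi_t$.

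With this characterization in hand, compare $\shift(S)[\pi]$ with the word obtained from $\shift(T)[\pi]$ by shifting $\pi_i$ last.
\begin{itemize}
\item The letters $\pi_{t_1},\dots,\pi_{t_a}$ are processed identically in both procedures, since $\pi_i$ lies to their right and does not affect which original records precede them.
\item In $\shift(S)$, the letter $\pi_i$ is moved next, to just before its target record $\pi_{m(i)}$.
\item The letters $\pi_{t_b}$ with $b>a$ are moved afterwards. Each still lands just before its own target record $\pi_{m(t_b)}$, so its landing spot is unchanged, except possibly for how it is ordered relative to $\pi_i$ when both have the same target.
\end{itemize}

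The main obstacle is exactly this tie case, $m(i)=m(t_b)$, where two shifted letters stack in front of the same record. In the $\shift(S)$ order, $\pi_{t_b}$ is inserted after $\pi_i$ is already in place, and it is placed at the rightmost position making it a record. If $\pi_{t_b}>\pi_i$, it goes directly before $\pi_{m}$, to the right of $\pi_i$. If $\pi_{t_b}<\pi_i$, it must go before $\pi_i$, so the block in front of $\pi_m$ consists of the letters in increasing order. Shifting $\pi_i$ last from $\shift(T)[\pi]$ also places $\pi_i$ at the rightmost spot making it a record, which is its correct place in the increasing block.

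I would prove by induction that every stack of shifted letters in front of a common record is increasing, and that this arrangement is the unique one compatible with all of them being records. This uniqueness yields equality of the two words. The only care needed is to check that the rule "rightmost position making it a record" agrees with the rule "insert into the increasing block", and this follows from the increasing-stack invariant.

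Finally, for the displayed identity, $\shift(S)[\pi]$ and $\shift(T)[\pi]$ differ only by the position of the letter $\pi_i$. Deleting $\pi_i$ from either word therefore gives the same injective word of length $n-1$, possibly with different signs, which is exactly the claim.
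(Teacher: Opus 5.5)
Your strategy is sound and differs from the paper's, but invariant (2) is false as literally stated. It has to be replaced by the invariant that your tie-case discussion actually relies on.

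\textbf{The counterexample.} Take $\pi=4321$. It lies in $\L_4(312)$, so it is a homology facet of $\I_4^{312}$, and $U(\pi)=\{3,4\}$. Shifting $2$ gives $2431$. Then $1$ is inserted immediately before the previously shifted letter $2$, so $\shift(\{3,4\})[\pi]=1243$; the $1$ does not land immediately before the record $4$. Similarly, in your tie case a letter $\pi_{t_b}>\pi_i$ goes in front of the first letter of the run that exceeds it, not necessarily directly before $\pi_m$.

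\textbf{The corrected invariant.} Let $\mathrm{rec}(x)$ be the first original record exceeding $x$. After any sequence of shifts:
\begin{enumerate}
\item each shifted letter $x$ lies in the run of shifted letters immediately preceding $\mathrm{rec}(x)$;
\item each such run is increasing;
\item the unshifted letters keep their original relative order.
\end{enumerate}
The induction step needs two facts. First, $\mathrm{rec}(x)$ is still to the left of $x$. Second, every letter to the left of the run in front of $\mathrm{rec}(x)$ is smaller than $x$. Unshifted letters there are bounded by the preceding original record, which is below $x$. Shifted letters there are bounded by the earlier records they sit in front of. Granting these, the first letter exceeding $x$ lies in that run or is $\mathrm{rec}(x)$ itself, so $x$ is inserted into the run and the run stays increasing.

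\textbf{Order independence.} You should state explicitly that this step does not depend on the order in which letters are shifted. This matters because shifting $\pi_i$ after all of $T$ is not the left-to-right order that defines $\shift(S)$. Once that is said, both $\shift(S)[\pi]$ and the word obtained from $\shift(T)[\pi]$ by shifting $\pi_i$ last are the unique word of the above shape for the set $S$. Both assertions of the lemma then follow at once.

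\textbf{Comparison with the paper.} The paper never describes $\shift(T)[\pi]$ globally. It inducts over the letters $\pi_k$ with $k\in T$, $k>i$, and splits into two cases:
\begin{itemize}
\item If $\pi_k<\pi_i$, then $\pi_k$ must pass everything $\pi_i$ passes, so whether or not $\pi_i$ was shifted, $\pi_k$ lands in the same place relative to the other letters.
\item If $\pi_k>\pi_i$, then $\pi_k$ never moves left of $\pi_i$'s destination, because everything there is smaller than $\pi_i$.
\end{itemize}
From this it concludes that the two words agree once $\pi_i$ is deleted. Your normal-form argument is instead a confluence statement: $\shift(T)[\pi]$ depends only on the set $T$, so the individual shifts commute. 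That is stronger, and it makes the assertion about $\pi_i$'s own landing spot immediate, whereas the paper's reduction is phrased only in terms of the words with $\pi_i$ removed. The paper's argument is more local and avoids describing the final word, at the cost of tracking each shifted letter against $\pi_i$ case by case.
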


\begin{proof}
Our proof is by induction on $k$.  Let $T_k = T\cap \{ 1,\dots ,k\} $, and  let $S_k=S\cap \{ 1,\dots ,k\} $.  We will prove for each $k$ in turn that $d_{\underline{\pi_i}}(\shift(T_k)[\pi]) = \pm d_{\underline{\pi_i}}(\shift(S_k)[\pi ])$.
It suffices to prove  for each $\pi_k$ satisfying $k\in T$ and $k\ne i$ that $\pi_k$ appears in the same position in the unique  word $\gamma_k $ in the support of  $d_{\underline{\pi_i}}(\shift(T_k)(\pi ))$ as $\pi_k$ appears in the unique word $\gamma '_k$ in the support of  $d_{\underline{\pi_i}}(\shift(T_k)(\pi ))$, which  by induction on $k$ would imply that $\gamma_k = \gamma '_k$ for each $k$.  In other words, it suffices to check  for  each  letter $\pi_k$ satisfying   $k\in T$ and $k\ne i$ that applying $\shift(T)$ to $\pi = \pi_1\cdots \pi_n$ shifts $\pi_k$ leftward to the same position upon restriction to the subword in which $\pi_i$ is omitted that this letter $\pi_k$ shifts leftward into upon application of the map $\shift(S)$ to $\pi $ followed by restriction again  to the subword in which $\pi_i$ is omitted.   
We now turn to the proof of this claim.  Since the shifting processes are identical for $\shift(S)$ and $\shift(T)$ for $k<i$, the case of  $k<i$ follows vacuously.

By our inductive hypothesis,  we may assume that  the shifting of each of the letters $\pi_{k'}$ with $k'\in T$, $k'\ne i$  and $k'<k$ has the desired property in order to  verify this property for $\pi_k$. 
To prove our claim, we must handle the following two cases:
\begin{enumerate}
\item $\pi_k<\pi_i$ 
\item  $\pi_k>\pi_i$ 
\end{enumerate}

For  case (1), 
note that 
each letter in
$\{ \pi_1,\dots ,\pi_n\} \setminus \{ \pi_i,\pi_k\} $ that  $\pi_i$ would shift past when it is shifted leftward the minimal amount necessary in applying $\shift(S)$ to make it a left-to-right maximum is a letter that $\pi_k$ would also need to shift leftward past, regardless of whether $\pi_k$ is getting shifted leftward 
in the course of applying the operator  $\shift(S)$ or the operator $\shift(T)$; this follows directly from the fact that every letter which is larger than $\pi_i$ is also larger than $\pi_k$.  Moreover, we are ensured that $\pi_i$ does shift leftward past at least one letter, since $\pi_i$ is not initially a left-to-right maximum. 
Thus, $\pi_k$ must shift past all of the letters that are initially to the left of $\pi_i$ that are larger than $\pi_i$, of which there must be at least one such letter. If there are additional letters to the left of the position to which $\pi_i$ shifts that are larger than $\pi_k$ but not $\pi_i$, then $\pi_k$ also shifts to the left of these, regardless of whether applying $\shift(S)$ or $\shift(T)$.  The result follows in this case.

For case (2), the case with  $\pi_k>\pi_i$, we claim both for the application of the map $\shift(S)$ and for the map $\shift(T)$ that  $\pi_k$ will not shift leftward beyond  the position to which $\pi_i$ shifts in the application of $\shift(S)$; this holds regardless of whether $\pi_i$ has already been  shifted leftward or has been excluded from the shifting process,  since all letters to the left of the position to which $\pi_i$ shifts are smaller than $\pi_i$ and hence also smaller than $\pi_k$.  Also note that $\pi_k$ does not need to shift leftward past $\pi_i$ in either case  to be a left-to-right maximum since $\pi_k$ is larger than $\pi_i$.  The result follows. 
\end{proof}

\begin{ex}
Consider the permutation pattern $321$.  
Consider $\pi = \underline{61}2\underline{73}4\underline{85}$ with $S = \{ 3,6\} $ and $T = \{ 6\}$.  Then 
$\shift(T)[\pi ] = 4\underline{61}2\underline{73}\underline{85}$ and 
$\shift(S)[\pi ] = 24\underline{61}\underline{73}\underline{85}$.  In this case,
$\shift(S)[\pi ]$ is obtained from $\shift(T) [\pi ]$ by shifting the letter 2 leftward past the values  $6,1,$ and $2$.  Thus, 
$\shift(S)$ can be expressed as a composition of $\shift(T)$ and a permutation of length 3.  However, the length of the permutation $\shift(S)$ is only one more than the length of the permutation $\shift(T)$.  This discrepancy in length results from  the letters $2$ and $4$ getting  swapped and then unswapped when first applying $\shift(T)$ and then applying the leftward shift of the value $2$.  Fortunately, the sign of a permutation can be calculated using any expression for the permutation, reduced or otherwise; this is justified by Proposition~\ref{sign-tool}.  

Notice that applying the boundary map $d_{\underline{2}}$ to each of these two permutations, namely $\shift(S)(\pi )$ and $\shift(T)(\pi )$, yields the same result, up to a sign.  That is, 
$$d_{\underline{2}}(4\underline{61}2\underline{73}\underline{85}) = \pm d_{\underline{2}}(24\underline{61}\underline{73}\underline{85}). $$
This enables  cancellation of the terms 
$d_{\underline{2}}(46127385)$ and $d_{\underline{2}}(24617385)$ within $d_{\underline{2}}(\pi_z)$, provided that they appear with opposite sign in $d(z_{\pi }).$  Lemma~\ref{cancel-non-max} shows that such pairs indeed have opposite signs.
\end{ex}

To show that terms meant to cancel within $d_{\underline{i}}(z_{\pi })$ indeed have opposite signs and hence do cancel, we will make use of the following well-known fact.  This appears e.g. as Proposition 1.4.2, part 2, in \cite{BB} (where it is stated in the generality of any  Coxeter group):

\begin{prop}[Proposition 1.4.2, part 2, \cite{BB}]\label{sign-tool}
For all $u,w\in\S_n$, the length function $l$ on permutations satisfies $l(uw) \equiv l(u) + l(w)$ (mod 2).  Therefore $\sgn (uw) = \sgn(u) \sgn(w)$.
\end{prop}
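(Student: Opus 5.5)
The statement is the standard fact that $l(uw)\equiv l(u)+l(w) \pmod 2$ for $u,w\in\S_n$, hence $\sgn(uw)=\sgn(u)\sgn(w)$. I would prove it by identifying the parity of the Coxeter length with a homomorphism. Recall that $l(w)$ is the minimal number of adjacent transpositions $s_1,\dots,s_{n-1}$ needed to express $w$, and that $l(w)$ equals the number of inversions $\mathrm{inv}(w)=|\{(a,b): a<b,\ w(a)>w(b)\}|$.

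First I would show that right multiplication by a single adjacent transposition $s_i$ changes the inversion number by exactly $\pm1$. Composing $w$ with $s_i$ swaps the entries in positions $i$ and $i+1$ of the one-line notation. This toggles whether that one pair is inverted and leaves the inverted/non-inverted status of every other pair of values unchanged. Consequently, if $w=s_{i_1}\cdots s_{i_r}$ is \emph{any} expression in adjacent transpositions, reduced or not, then $\mathrm{inv}(w)\equiv r \pmod 2$. This follows by induction on $r$, starting from $\mathrm{inv}(e)=0$.

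Next, given $u,w$, I would choose reduced words $u=s_{i_1}\cdots s_{i_p}$ with $p=l(u)$ and $w=s_{j_1}\cdots s_{j_q}$ with $q=l(w)$. Their concatenation is a (possibly non-reduced) expression for $uw$ of length $p+q$. By the previous step, $l(uw)=\mathrm{inv}(uw)\equiv p+q=l(u)+l(w)\pmod 2$. Since $\sgn(v)=(-1)^{l(v)}$, the multiplicativity $\sgn(uw)=\sgn(u)\sgn(w)$ follows immediately.

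The only genuine point to check is the first step: that an adjacent swap changes the inversion count by exactly one, and that $l=\mathrm{inv}$ (or, equivalently for our purposes, that only parity matters). Nothing else is delicate. If one prefers to avoid $l=\mathrm{inv}$, it suffices to define $\sgn$ via $\mathrm{inv}$ directly and observe that $\mathrm{inv}\equiv r$ for every word of length $r$. This is precisely the form used in the paper: the sign may be computed from any, even non-reduced, expression for the permutation, such as the composite expression for $\shift(S)$ in terms of $\shift(T)$ and a leftward shift.
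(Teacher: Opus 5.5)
Your proof is correct, but it takes a different route from the paper. The paper gives no argument of its own: it quotes the statement from Bj\"orner--Brenti, where it is stated for an arbitrary Coxeter system $(W,S)$. In that generality the proof goes through the sign character. Every defining relation $(ss')^{m(s,s')}=e$ is a word of even length, so the assignment $s\mapsto -1$ extends to a homomorphism $\varepsilon\colon W\to\{\pm 1\}$. Hence $\varepsilon(w)=(-1)^r$ for every expression of $w$ of length $r$, in particular $\varepsilon(w)=(-1)^{\ell(w)}$, and additivity of $\ell$ modulo $2$ is just multiplicativity of $\varepsilon$.

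Your argument is the type $A$ specialization with a different witness for well-definedness. Instead of the presentation, you use the inversion number, which changes by exactly $\pm 1$ under each right multiplication by some $s_i$. So all words for a given permutation have lengths of the same parity.

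The gain is self-containedness. Transported to $\S_n$, the sign-character argument needs either the Coxeter presentation of $\S_n$ or some independent construction of a homomorphism $\S_n\to\{\pm1\}$ with $s_i\mapsto -1$. Your inversion count is exactly such a construction, with no further input. You are also right that $\ell=\mathrm{inv}$ is not needed: your parity step applied to a reduced word already gives $\mathrm{inv}(v)\equiv\ell(v)\pmod 2$.

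Your intermediate claim, that every expression (reduced or not) has length congruent to $\mathrm{inv}$, is precisely Corollary~\ref{cor:same-parity}. That is the form the paper actually uses when comparing the signs of $\shift(S)$ and $\shift(T)$. The cited route buys generality to all Coxeter groups, which the paper does not need.
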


This will allow us to record the signs of permutations that shift individual letters leftward and multiply these signs to get the sign of a permutation  which may be expressed as  a composition of one or more  such leftward  letter shifts.  In particular, we do not need to worry about the order in which these leftward shifting operations are carried out, even when some choices yield nonreduced expressions.
Specifically, we will use the following corollary of Proposition~\ref{sign-tool}:

\begin{cor}\label{cor:same-parity}
The parity of the number of letters $l$ appearing in a (reduced or nonreduced) expression $s_{i_1}\cdots s_{i_l}$ for a fixed  Coxeter group element $w$ is independent of the choice of expression.

Therefore,  if a set $S$ of letters is  shifted leftward via a reduced expression for a permutation $\shift(S)$, the length of this permutation agrees, up to a multiple of 2, with 
$l(u) + l(w)$ where $u$ shifts all but one of the letters   leftward, namely shifting the letters in some subset $T$ of $S$ leftward, thereby carrying out a permutation $\shift(T)$ as described above, and then $w$ shifts the unique letter in $S\setminus T$  leftward  to the unique position such that $\shift(S) = u\circ w$. 
\end{cor}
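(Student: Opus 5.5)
The first assertion of Corollary~\ref{cor:same-parity} follows directly from Proposition~\ref{sign-tool}. Let $s_{i_1}\cdots s_{i_l}$ be any word, reduced or not, whose product is $w$. Applying Proposition~\ref{sign-tool} $l-1$ times gives
\[
l(w)\equiv \sum_{r=1}^{l} l(s_{i_r}) = l \pmod 2,
\]
since each simple transposition has length one. Hence $l\equiv l(w)$ modulo 2 for every expression of $w$, so the parity of $l$ depends only on $w$. Equivalently, $\sgn(w)=(-1)^l$ for any such expression.

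For the second assertion, I would first use Lemma~\ref{stay-consistent}. It says that $\shift(S)[\pi]$ is obtained from $\shift(T)[\pi]$ by a single further operation: the letter $\pi_i$, the unique element of $S\setminus T$, is moved leftward by the minimal amount needed to make it a left-to-right maximum. This leftward move of one letter past some number $q$ of letters is a cycle on positions. It can be written as a product of $q$ adjacent transpositions, so its parity is that of $q$. Call this permutation of positions $w$, and set $u=\shift(T)$. Composing in the order used in the paper (first $u$, then $w$) gives exactly $\shift(S)=u\circ w$ as permutations of positions. This identification of the composite is the one substantive point, and it is precisely the content of Lemma~\ref{stay-consistent}.

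Now write $\shift(S)$ as the concatenation of an expression for $u$ and an expression for $w$. This expression need not be reduced: the example preceding the corollary shows letters being swapped and then unswapped. By the first part, its length still has the same parity as $l(\shift(S))$. Therefore $l(\shift(S))\equiv l(u)+l(w)\pmod 2$, or equivalently $\sgn(\shift(S))=\sgn(\shift(T))\,(-1)^{q}$.

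I expect no real obstacle here, only bookkeeping. The care needed is in fixing the convention that these are permutations of positions, and in checking that the composition order matches the order in which the shifts are carried out. Since the parity of a sum does not depend on the order of its terms, any consistent convention gives the same conclusion.
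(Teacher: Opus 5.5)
Your proof is correct and follows the paper's intended argument. The first part is Proposition~\ref{sign-tool} iterated over the letters of the word. The second part applies that proposition to the factorization $\shift(S)=u\circ w$, and Lemma~\ref{stay-consistent} supplies the fact that $w$ is a single leftward shift past $q$ letters; the paper states the corollary without a separate proof, and you fill in exactly the routine details it leaves implicit.
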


\begin{lem}\label{cancel-non-max}
Let $\sigma$ be a pattern that starts with its largest letter.  
Let $\pi $ be a homology facet in $\I_n^{\sigma }$. 
Let $i$ be a letter that is neither a left-to-right maximum in $\pi $ nor the immediate successor of any left-to-right maximum in $\pi $.  Then 
$d_{\underline{i}}(z_{\pi })=0$.
\end{lem}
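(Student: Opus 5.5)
Let $i=\pi_k$ with $k\in U(\pi)$. The plan is a sign-reversing involution on the summands of $z_\pi$ under $d_{\underline{i}}$. Each summand is indexed by a pair $(M,T)$ with $M\subseteq L(\pi)$ and $T\subseteq U(\pi)$, and equals $\pm\bigl(\prod_{m\in M}(\pi_m,\pi_{m+1})\bigr)\shift(T)[\pi]$. Pair $(M,T)$ with $(M,T\triangle\{k\})$; this is a fixed-point-free involution on index pairs. By Lemma~\ref{stay-consistent}, with $S=T\cup\{k\}$, the words $\shift(S)[\pi]$ and $\shift(T)[\pi]$ agree after deleting $i$, since the former is obtained from the latter by shifting $i$ leftward. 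The transpositions in $M$ act on values $\pi_m,\pi_{m+1}$, which by Observations~\ref{stay-maxima} and~\ref{stay-successor} remain adjacent in both words. Hence deleting $i$ commutes with applying $\prod_{m\in M}(\pi_m,\pi_{m+1})$, and the two paired summands have the same underlying word in $d_{\underline{i}}$. It remains to show their contributions have opposite signs.

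For the sign computation, write $p$ for the position of $i$ in $\shift(T)[\pi]$ (after applying $M$, which does not change positions of letters outside the swapped adjacent pairs, so $i$'s position is unchanged). Write $q<p$ for its position after the extra leftward shift. The contribution of $(M,T)$ to $d_{\underline{i}}$ carries coefficient $(-1)^{|T|}\sgn(\shift(T))(-1)^{p-1}$. The contribution of $(M,S)$ carries $(-1)^{|T|+1}\sgn(\shift(S))(-1)^{q-1}$. By Lemma~\ref{stay-consistent} and Corollary~\ref{cor:same-parity}, $\shift(S)=u\circ w$, where $u=\shift(T)$ and $w$ is the cycle moving one letter leftward from position $p$ to position $q$, a product of $p-q$ adjacent transpositions. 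So $\sgn(\shift(S))=\sgn(\shift(T))(-1)^{p-q}$. Substituting, the second coefficient is $-(-1)^{|T|}\sgn(\shift(T))(-1)^{p-q}(-1)^{q-1}$, which is exactly the negative of the first. Therefore the pairs cancel and $d_{\underline{i}}(z_\pi)=0$.

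The main obstacle is making the composition $\shift(S)=\shift(T)$ followed by one cyclic shift precise as permutations of positions. This is needed to read off the sign, and it requires care about whether we compose on positions or on values. I would fix the convention that $\shift(\cdot)$ acts on positions and verify that Lemma~\ref{stay-consistent} yields exactly this factorization. Proposition~\ref{sign-tool} then shows that nonreducedness is irrelevant. A secondary check is that $M$-swaps never involve $i$ and that $i$ keeps its position under them; this holds because $k\in U(\pi)$, and Observation~\ref{stay-successor} keeps each pair $\pi_m\pi_{m+1}$ adjacent and disjoint from $i$.
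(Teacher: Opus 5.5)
Your proposal is correct and follows essentially the same route as the paper. You fix $M$, pair the summands indexed by $T$ and $T\cup\{k\}$, and use Lemma~\ref{stay-consistent} to see that the two words coincide after deleting $i$. The sign factor $(-1)^{p-q}$ from Corollary~\ref{cor:same-parity}, the factor $-1$ from $(-1)^{|T|}$, and the position signs $(-1)^{p-1}$, $(-1)^{q-1}$ from $d_{\underline{i}}$ then give cancellation, exactly as in the paper's argument; phrasing it as an involution on index pairs $(M,T)$ is a tidy way to avoid worrying about coinciding words.
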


\begin{proof}
Similarly to in Lemma~\ref{cancel-max-and-successor}, we will again cancel the summands of $z_{\pi }$ in pairs, but the proof will be a bit more involved in this case.

Let $i=\pi_j$ be a letter in $\pi $ that is neither  a left-to-right maximum nor its immediate successor in our given  homology facet $\pi $.  Consider any fixed $M\subseteq L(\pi )$.  
For each $S, T\subseteq U(\pi )$ satisfying  $ T = S\cup \{ j\} $ with 
$j\not\in S$, we pair the  injective word 
$$\pi (T): = (\prod_{m\in M}(\pi_m,\pi_{m+1}))\shift(T)[\pi ]$$ appearing with nonzero coefficient in $z_{\pi }$ 
with the  injective word 
$$\pi (S)  \coloneqq  (\prod_{m\in M}(\pi_m,\pi_{m+1}))\shift(S)[\pi ]$$ also appearing with nonzero coefficient in $z_{\pi }$.  Recall our nonstandard convention 
that the shift operators specify a list of positions and then shift leftward the letters appearing in those positions whereas the operator  $\prod_{m\in M}(\pi_m,\pi_{m+1})$ 
swaps the pairs of values $(\pi_m,\pi_{m+1})$ that appear  in positions $m,m+1$ in $\pi $ itself.

By Lemma~\ref{stay-consistent}, $\shift(T)[\pi ]$ is obtained from $\shift(S)[\pi ]$ by shifting $\pi_j$ leftward the minimal amount necessary to make $\pi_j $ a left-to-right maximum.  Thus, we may use Corollary~\ref{cor:same-parity} to deduce that  $\sgn(\shift(T)) = (-1)^{s-r}\sgn (\shift(S)$ where $r$ is the position of $\pi_j$ in $\shift(T)[\pi ] $
and $s$ is the position of $\pi_j$ in $\shift(S)[\pi ]$.
Note also that applying $\prod_{m\in M}(\pi_m,\pi_{m+1})$ to $\shift(S)[\pi ]$ (resp. $\shift(T)[\pi ])$ does not impact the location of $\pi_j$ since $\pi_j\in U(\pi )$. 
Thus, we may deduce
$$\sgn (\pi (T)) = (-1)^{s-r}\sgn (\pi (S)).$$

Also note that $(-1)^{|S|} = -(-1)^{|T|}$ since $|S|=|T|+1$.  These calculations  combine to show that the coefficient of $\pi (S)$ in $z_{\pi }$ differs from the coefficient of $\pi (T)$ in $z_{\pi }$ by a factor of $(-1)^{s-r+1}$.  On the other hand, the coefficients introduced by applying the boundary map $d_{\underline{i}}$ to $\pi (S)$ and $\pi (T)$, respectively,  differ by a factor of $(-1)^{s-r}$, due to $i$ being deleted from position $s$ in $\pi (S)$ and from  position $r$ in $\pi (T)$.  Therefore the coefficients of 
$d_{\underline{i}}(\pi (S))$ and $d_{\underline{i}}(\pi (T))$ within $d_{\underline{i}}(z_{\pi })$ differ by a factor of $(-1)^{s-r+1}\cdot (-1)^{s-r} = -1$, as desired.
\end{proof}

\begin{ex}
Consider once again   
the homology facet   
$\pi = \underline{61}2\underline{73}4\underline{85}$ in $\I_8^{321}$.  Again consider the subsets $S = \{ 3,6\} $ and $T = \{ 6\}$ of $U(\pi )$.  The letter getting shifted leftward by $\shift(S)$ that does not get shifted leftward by $\shift(T)$ is the letter 2.

Note that 
$\shift(T)[\pi ] = 4\underline{61}2\underline{73}\underline{85}$ has the letter $2$ in position $s=4$ while  
$\shift(S)[\pi ] = 24\underline{61}\underline{73}\underline{85}$ has
the letter $2$ in position $r=1$, so $s-r=3$. 

Since  $|S|=2$ and $|T|=1$, 
the coefficients for the summands   
$\prod_{m\in M}(\pi_m,\pi_{m+1})\shift(T)$  and  
$\prod_{m\in M}(\pi_m,\pi_{m+1})\shift(S)$ in $z_{\pi }$ differ by a factor of $(-1)^{s-r}\cdot (-1)^{|S|-|T|} = (-1)^3(-1)^1 = 1$.  
Although these summands  
have the same sign within $z_{\pi }$,  
applying the boundary map $d_{\underline{2}}$ to each of these summands gives results that differ  
by a sign of 
$(-1)^{s-r}=(-1)^3 = -1$.
The net result is opposite signs in $d(z_{\pi })$, just as needed to ensure these two summands will cancel in $d(z_{\pi })$. 
\end{ex}

Combining Lemmas~\ref{cancel-max-and-successor} and~\ref{cancel-non-max} above yields:

\begin{prop}
Let $\sigma$ be a pattern that starts with its largest letter, and let $\pi $ be a homology facet  in our shelling for $\I_n^{\sigma }$ in which we order the $\sigma $-avoiding permutations written in one line notation lexicographically. Then the corresponding element $z_{\pi }$ of the top chain group satisfies $d(z_{\pi })=0$, implying $[z_{\pi }]$ is a homology class.
\end{prop}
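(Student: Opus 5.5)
The plan is to decompose the boundary as $d=\sum_{i=1}^n d_{\underline{i}}$, as in Definition~\ref{def:boundary-splitting}, and show that $d_{\underline{i}}(z_\pi)=0$ for every letter $i\in[n]$. Every letter of the homology facet $\pi$ falls into one of two classes. Either it occupies a position not in $U(\pi)$, meaning it is a left-to-right maximum of $\pi$ or the immediate successor of one, or its position lies in $U(\pi)$. So it suffices to treat these two cases separately and then sum.

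\emph{Letters outside $U(\pi)$.} For a letter $i$ that is a left-to-right maximum $\pi_m$ or its successor $\pi_{m+1}$, invoke Lemma~\ref{cancel-max-and-successor}. Every summand of $z_\pi$ keeps $\pi_m$ and $\pi_{m+1}$ in adjacent positions; this follows from Observations~\ref{stay-maxima} and~\ref{stay-successor}. The outer sum over $M\subseteq L(\pi)$ then pairs each summand with its image under the transposition $(\pi_m,\pi_{m+1})$, and the two members of a pair carry the same coefficient. Deleting $i$ from the two members of a pair gives the same word, but from positions differing by one. The signs are therefore opposite and the pair cancels, giving $d_{\underline{i}}(z_\pi)=0$.

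\emph{Letters in $U(\pi)$.} For a letter $i=\pi_j$ with $j\in U(\pi)$, invoke Lemma~\ref{cancel-non-max}. Fix $M$ and pair the summand indexed by $S\subseteq U(\pi)\setminus\{j\}$ with the one indexed by $S\cup\{j\}$. Lemma~\ref{stay-consistent} says that $\shift(S\cup\{j\})[\pi]$ is obtained from $\shift(S)[\pi]$ by shifting $\pi_j$ leftward from some position $s$ to some position $r$. Hence the two words agree after deleting $\pi_j$. Applying the transpositions for $M$ does not disturb this, since $\pi_j$ is not in any swapped pair. Next compare coefficients. By Corollary~\ref{cor:same-parity}, the signs of the two shift permutations differ by $(-1)^{s-r}$. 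The factor $(-1)^{|T|}$ contributes one more $-1$. The boundary signs differ by $(-1)^{s-r}$ as well, so the net factor is $-1$ and the paired terms cancel.

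Summing over all $i$ gives $d(z_\pi)=0$, so $[z_\pi]$ is a homology class. The summands of $z_\pi$ are genuine faces of $\I_n^\sigma$ by the preceding proposition, which shows that every permutation in the support of $z_\pi$ is $\sigma$-avoiding.

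The main obstacle is Lemma~\ref{stay-consistent}, the consistency of shifting. One must check that inserting the extra shift of $\pi_j$ does not change where the other shifted letters land once $\pi_j$ is deleted. My approach is induction over the shifted letters from left to right, splitting into the cases $\pi_k<\pi_j$ and $\pi_k>\pi_j$ as the lemma does. A second point needs care in the $U(\pi)$ case: the pairing $S\leftrightarrow S\cup\{j\}$ must be an involution on the summands with fixed $M$. It is, because it toggles membership of $j$.
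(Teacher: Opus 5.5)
Your proposal is correct and follows the paper's proof essentially verbatim. The paper likewise splits $d=\sum_i d_{\underline{i}}$, handles left-to-right maxima and their successors by pairing summands that differ in whether $m\in M$ (Lemma~\ref{cancel-max-and-successor}), and handles the remaining letters by pairing summands that differ in whether $j\in T$, with the sign bookkeeping via Lemma~\ref{stay-consistent} and Corollary~\ref{cor:same-parity} (Lemma~\ref{cancel-non-max}).
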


The next lemma will facilitate  a triangularity argument to show that the set of  homology classes $[z_{\pi }]$ given by the homology facets  $\pi  $ of our shelling  comprise a homology basis.

\begin{lem}\label{get-smaller}
Let $\sigma$ be a pattern that starts with its largest letter, and let $\pi $ be a homology facet in our shelling for $\I_n^{\sigma }$ in which we order the $\sigma $-avoiding permutations written in one-line notation lexicographically.
Then each permutation appearing in $z_{\pi }$ with nonzero coefficient is at least as small in lexicographic order  as $\pi $.      
\end{lem}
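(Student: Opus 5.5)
The plan is to treat the two operations in Definition~\ref{def:homology-cycle} separately. Each summand of $z_\pi$ has the form $(\prod_{m\in M}(\pi_m,\pi_{m+1}))\shift(T)[\pi]$. I will show two things. First, $\shift(T)[\pi]\le_{\mathrm{lex}}\pi$ for every $T\subseteq U(\pi)$. Second, applying the swaps indexed by $M$ to any word in the image of $\shift$ can only decrease it lexicographically. Lexicographic order is transitive, so the two steps together give the lemma.

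For the swaps, I will use Observations~\ref{stay-maxima} and~\ref{stay-successor}. In $\shift(T)[\pi]$, each original left-to-right maximum $\pi_m$ with $m\in L(\pi)$ still sits immediately before its original successor $\pi_{m+1}$. Because $\pi$ is a homology facet, Corollary~\ref{homology-facet-description} gives $\pi_m>\pi_{m+1}$. Each swap $(\pi_m,\pi_{m+1})$ therefore replaces an adjacent descent by an ascent. These swaps act on disjoint pairs of adjacent positions. So at the leftmost position where the word changes, the new letter is $\pi_{m+1}<\pi_m$, and the resulting word is lexicographically smaller.

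For the shifts, let $T\neq\emptyset$ and let $t=\min T$. Positions in $T$ are processed from left to right. The first shift takes $\pi_t$ leftward to the rightmost position $p$ at which it becomes a left-to-right maximum. Since $t\in U(\pi)$, $\pi_t$ is not a left-to-right maximum of $\pi$, so $p<t$ and $\pi_t$ jumps over at least one larger letter. Because $p$ is the rightmost position making $\pi_t$ a left-to-right maximum, the letter $\pi_p$ that was displaced is larger than $\pi_t$. Hence after this first shift the word agrees with $\pi$ on positions $1,\dots,p-1$ and has the smaller letter $\pi_t$ in position $p$.

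It remains to check that the later shifts, of letters $\pi_k$ with $k\in T$ and $k>t$, do not disturb positions $1,\dots,p$. By Observation~\ref{toughest-observation-to-prove}, or Lemma~\ref{stay-consistent}, we may view each later shift as acting on the current word. If $\pi_k<\pi_t$, then $\pi_k$ has at least one larger letter at or before position $p$, namely $\pi_t$ or something larger. It could then move into positions $\le p$ and change the prefix. In that case its new position $q\le p$ receives $\pi_k$, which is smaller than the letter previously there, so the word still decreases at an earlier or equal position, and the prefix before $q$ is still unchanged. If instead $\pi_k>\pi_t$, then every letter in positions $\le p$ is smaller than $\pi_k$, so $\pi_k$ stops strictly to the right of $p$.

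The main obstacle is to make precise the claim that each successive leftward shift places a letter smaller than the letter it displaces, at a position weakly left of the current first difference from $\pi$. I will phrase this as an induction over the elements of $T$ in increasing order. The invariant is: the current word $w$ satisfies $w\le_{\mathrm{lex}}\pi$, and any leftward minimal shift of a letter that is not a left-to-right maximum produces a word lexicographically smaller than $w$. The second part holds because the displaced letter is always larger than the shifted one, and positions to the left of the insertion point are untouched. With that invariant, both steps, and hence the lemma, follow.
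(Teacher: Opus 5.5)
Your proposal is correct and follows essentially the same route as the paper: each summand is obtained from $\pi$ by the leftward moves of $\shift(T)$, each of which places a letter immediately before a larger letter and so strictly decreases lexicographic order, followed by the $M$-swaps, which by Observation~\ref{stay-successor} turn adjacent descents $\pi_m\pi_{m+1}$ into ascents. Your case analysis tracking the prefix up to position $p$, and the appeal to Lemma~\ref{stay-consistent}, are unnecessary. The step-by-step invariant together with transitivity of $\lex$ already suffices, exactly as in the paper.
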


\begin{proof}
First note for any permutation $\tau = \tau_1\cdots \tau_n$ that shifting a letter $\tau_j$ that is not a left-to-right maximum in $\tau $ leftward to the immediate left of a letter $\tau_i$ that is larger than $\tau_j$ yields a permutation that is lexicographically smaller than $\tau $.  This implies for each $T\subseteq U(\pi )$ that $\shift(T)[\pi ]$ is lexicographically at least as small as  $\pi$, due to being obtained by a series of leftward shifts of individual letters $\tau_j$ that each in turn produce a lexicographically smaller permutation.  

Next we show for any $M\subseteq L(\pi )$ and any $T\subseteq U(\pi )$ that 
$\prod_{m\in M}(\pi_m,\pi_{m+1})(\shift(T)[\pi ])$ is at least as small lexicographically as $\shift(T)[\pi ]$.  To check this, we use the fact (justified in Observation~\ref{stay-successor})  that $\pi_m$ is to the immediate left of $\pi_{m+1}$ in $\shift(T)(\pi )$ and satisfies $\pi_m>\pi_{m+1}$ for each $m\in M$.  From this it follows that each of the adjacent transpositions $(\pi_m,\pi_{m+1})$ for $m\in M$ produces a lexicographically permutation that the one to which it was applied.  Since each permutation appearing in $z_{\pi }$ is obtained as some $(\prod_{m\in M}(\pi_m,\pi_{m+1}))(\shift(T)[\pi ])$, the result follows.
\end{proof}

\begin{rk} Proposition 1.1 of \cite{Wachs-cohomology} does not apply directly to give the homology basis result we prove below in~\ref{homology-basis-theorem}, but it does serve as inspiration for this next result.  The issue preventing us from direction applying Proposition 1.1 of \cite{Wachs-cohomology} is that in some cases we have homology facets appearing in the support of the basis elements given by other homology facets.
\end{rk}

Finally we deduce:

\begin{thm}\label{homology-basis-theorem}
For any pattern $\sigma$ that starts with its largest letter,
the homology classes $[z_{\pi }]$ indexed by the homology facets $\pi $ in the shelling for $\I_n^{\sigma }$ which orders permutations lexicographically  
comprise a homology basis for the top homology of $\I_n^{\sigma }$.
\end{thm}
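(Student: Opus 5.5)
The plan is a dimension count followed by a triangularity argument against the cohomology basis dual to the homology facets.

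First, the number of cycles is right. By Theorem~\ref{thm:shellability_start_with_k} the complex $\I_n^{\sigma}$ is shellable with respect to $\lex$, and its top homology is free of rank $h_n(\I_n^\sigma)=|\L_n(\sigma)|$. By Corollary~\ref{homology-facet-description}, the homology facets are exactly the permutations in $\L_n(\sigma)$. The classes $[z_\pi]$ are therefore indexed by a set of the correct cardinality. Since $\I_n^\sigma$ has no $n$-dimensional cells, the top homology equals the group of top cycles $\ker d_{n-1}\subseteq C_{n-1}$, so each $[z_\pi]$ is simply the chain $z_\pi$. It remains to show that the $z_\pi$ form a $\mathbb{Z}$-basis of $\ker d_{n-1}$.

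Second, I will use a unitriangularity argument. Consider the projection $p\colon C_{n-1}\to \mathbb{Z}^{\L_n(\sigma)}$ that records the coefficients of homology facets. I claim $p$ restricted to $\ker d_{n-1}$ is injective. This is the standard shelling fact behind Wachs's cohomology basis \cite{Wachs-cohomology}: the cochains dual to homology facets form a cohomology basis. A direct argument is also available. Suppose $z$ is a nonzero cycle and let $F_j$ be the lex-largest facet in its support. If $F_j$ is not a homology facet, it has a codimension-one face not shared with any earlier facet. That face is also not contained in any later facet of the support, since all of those are smaller. So the face appears in $dz$ with coefficient $\pm$ the coefficient of $F_j$, contradicting $dz=0$. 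Hence the largest facet in the support of any nonzero cycle is a homology facet.

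Next, by Lemma~\ref{get-smaller}, every permutation in the support of $z_\pi$ is lexicographically at most $\pi$. The coefficient of $\pi$ itself in $z_\pi$ is $1$, coming from $M=\emptyset$ and $T=\emptyset$. I need to check that no other pair $(M,T)$ returns $\pi$. A nonempty $T$ moves a letter of $U(\pi)$ strictly leftward past a larger letter, giving a strictly smaller word. Each swap in $M$ places a smaller letter first, which again gives a strictly smaller word. Hence the matrix of $p(z_\pi)$, indexed by homology facets in lex order, is unitriangular with $1$'s on the diagonal.

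It follows that the $z_\pi$ are linearly independent over $\mathbb{Z}$ and that $p$ maps their span onto $\mathbb{Z}^{\L_n(\sigma)}$. Given any cycle $z$, subtract the integer combination $\sum c_\pi z_\pi$ having the same image under $p$. The difference is a cycle with no homology facet in its support, so by the injectivity claim it is zero. Therefore the $z_\pi$ form a $\mathbb{Z}$-basis.

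The main obstacle is making the diagonal claim and the injectivity step rigorous. For the diagonal claim, the point is that $\pi$ occurs exactly once with coefficient $1$; this uses the strictness in Lemma~\ref{get-smaller} together with Observation~\ref{stay-successor}, which guarantees that each $\pi_m>\pi_{m+1}$ remains adjacent. For injectivity, one must verify that the free face of a non-homology facet in the shelling is not a face of any lexicographically smaller facet. This is immediate from the definition of blockers, since a non-blocker $v$ gives a face $F_j\setminus\{v\}$ not contained in any earlier facet.
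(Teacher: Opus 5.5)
Your proof is correct, and its core is the paper's. The paper also argues by triangularity: Lemma~\ref{get-smaller} together with the coefficient $1$ of $\pi$ in $z_\pi$ makes the coefficient matrix triangular, which gives linear independence.

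The difference is in how you get spanning. The paper stops at independence and then uses a count: the number of $z_\pi$ equals the number of homology facets, which is the rank of the top homology. You instead prove directly that the lex-largest facet in the support of any nonzero top cycle is a homology facet. So your coordinate projection $p$ is injective on $\ker d_{n-1}$. You then combine this with unitriangularity of the square matrix of homology-facet coefficients.

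This buys something real. Independence plus the count only shows that the $z_\pi$ span a full-rank sublattice of $\ker d_{n-1}$, which is a basis over $\mathbb{Q}$ or any field. Your argument shows they form a $\mathbb{Z}$-basis, and it re-derives the rank count rather than assuming it.

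You also check that $\pi$ arises in $z_\pi$ only from $(M,T)=(\emptyset,\emptyset)$. Every other choice gives a strictly lex-smaller word, because each shift and each swap strictly decreases. The paper asserts the diagonal coefficient $1$ without comment.

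The price is one short extra lemma, the standard argument behind Wachs's cohomology basis. Your justification of it via non-blockers is sound.
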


\begin{proof}
This proof proceeds by a triangularity argument.
Here we use the fact that  each permutation $\pi '$ appearing with nonzero coefficient in $z_{\pi }$ for $\pi $ a homology facet 
is  less than or equal to $\pi $ in lexicographic order, by 
Lemma~\ref{get-smaller}.  This allows us to
show that the following matrix is upper triangular so as to deduce that the set of homology class representatives $z_{\pi }$ are linearly independent.  The rows and columns of this matrix are  
indexed by the permutations in $\I_n^{\sigma }$, in each case ordered lexicographically.  The row indexed by a  homology facet $\pi $ has entry $c(\pi ,\pi ')$  in each  column indexed by a permutation $\pi '$ appearing in 
$z_{\pi }$ with nonzero coefficient $c(\pi ,\pi')$  and has a 0 in every other column.  Each row indexed by a permutation that is not a homology facet consists entirely of 0's.   Consider now the submatrix consisting of the nonzero rows.   The row  indexed by $\pi $ in this submatrix has a 1 in the column also indexed by $\pi $, due to $\pi $ appearing in $z_{\pi }$ with coefficient 1.  This row has only 0's to the right of this column indexed by $\pi $, by Lemma~\ref{get-smaller}.  Thus, the rank of this matrix equals its number of rows.  This shows that the elements of our proposed homology basis are linearly independent.  But they are equinumerous with the rank of the top homology group, since that equals the number of homology facets.  Therefore  this collection of cycles is a homology basis.
\end{proof}

\subsection{Homology basis for $\I_n^{\sigma }$ for extremal-endpoint patterns}

Using standard isomorphisms discussed earlier in the paper, it is quite easy to generalize Theorem~\ref{homology-basis-theorem} 
as follows:

\begin{thm}\label{homology-theorem-two}
Given any permutation pattern $\sigma $ whose largest or smallest  letter is leftmost or rightmost, in other words, any extremal-endpoint pattern,  the complex $\I_n^{\sigma }$ admits a homology basis indexed by the homology facets of the shelling for $\I_n^{\sigma }$ given in Corollary~\ref{cor:extremal-endpoint}. 
\end{thm}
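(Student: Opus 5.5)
The plan is to transfer Theorem~\ref{homology-basis-theorem} along the isomorphisms of Propositions~\ref{isomorph-one} and~\ref{isomorph-two}. Every extremal-endpoint pattern $\sigma$ can be written as $\sigma=\tau^{\phi}$, where $\tau$ starts with its largest letter and $\phi$ is one of the identity, $R$, $C$, or $RC$. Correspondingly, let $\Phi:\I_n^{\tau}\to\I_n^{\sigma}$ be the map $w\mapsto w^{\phi}$ applied letterwise to injective words. It is a poset isomorphism of face posets, and hence a cellular isomorphism of Boolean cell complexes. By construction, the shelling of $\I_n^{\sigma}$ in Corollary~\ref{cor:extremal-endpoint} is the image under $\Phi$ of the lexicographic shelling of $\I_n^{\tau}$. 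So the homology facets of that shelling are exactly the images $\Phi(\pi)$ of the homology facets $\pi\in\L_n(\tau)$.

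\textbf{Step 1: $\Phi$ induces a chain isomorphism up to signs.} Complementation preserves positions, so it commutes exactly with each $d_r$. Reversal sends the letter in position $j$ of a word of length $r+1$ to position $r+2-j$. Hence, in each degree $r$, reversal intertwines the boundary maps up to the global sign $(-1)^{r}$, independent of the word. Define $\Phi_\#$ on $C_r$ by $\Phi_\#(w)=\epsilon_r\,\Phi(w)$, where the signs $\epsilon_r=\pm1$ are chosen recursively (for example $\epsilon_r=(-1)^{\binom{r+1}{2}}$ when $\phi$ involves $R$) so that $d\Phi_\#=\Phi_\# d$. Then $\Phi_\#$ is a chain isomorphism from $C_*(\I_n^{\tau})$ to $C_*(\I_n^{\sigma})$. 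I expect this sign bookkeeping to be the only real point requiring care. Alternatively, one can bypass it: since $\Phi$ is a cellular homeomorphism, it induces an isomorphism on cellular homology, and any consistent choice of orientations suffices.

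\textbf{Step 2: Transfer the basis.} For each homology facet $\pi$ of $\I_n^{\tau}$, set $z'_{\Phi(\pi)}=\Phi_\#(z_\pi)$. This is a cycle because $\Phi_\#$ is a chain map. The classes $[z'_{\Phi(\pi)}]$ form a basis of $\tilde H_{n-1}(\I_n^{\sigma})$ because $\Phi_\#$ induces an isomorphism on homology and $\{[z_\pi]\}$ is a basis by Theorem~\ref{homology-basis-theorem}.

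The indexing is by $\Phi(\pi)$, which runs over exactly the homology facets of the transported shelling. This gives the claimed basis. If one wants the basis elements made explicit, one can also note that triangularity persists with respect to the transported order: each $z'_{\Phi(\pi)}$ is supported on facets that come weakly before $\Phi(\pi)$ in the shelling, and it contains $\Phi(\pi)$ with coefficient $\pm1$.
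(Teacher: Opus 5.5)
Your proposal is correct and follows the same route as the paper, which pushes the basis of Theorem~\ref{homology-basis-theorem} through the reversal and complementation isomorphisms of Propositions~\ref{isomorph-one} and~\ref{isomorph-two}. Your sign bookkeeping fills in a detail the paper leaves implicit: reversal commutes with the boundary map up to $(-1)^r$ in degree $r$, and complementation commutes with it exactly, so top-degree cycles go to cycles and the basis transfers.
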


\begin{proof}
To handle permutation patterns whose largest letter is rightmost, we simply apply the isomorphism of complexes sending each injective word $\pi_1\cdots \pi_r$ to its reversal $\pi_r\cdots \pi_1$.  To handle permutation patterns whose smallest letter is leftmost, apply the isomorphism of complexes sending each injective word $\pi_1\cdots \pi_r$ to the injective word $(n+1-\pi_1)\cdots (n+1-\pi_r)$ where each letter $\pi_i$ is replaced by $n+1-\pi_i$.  For permutation patterns whose smallest letter is rightmost, apply the composition of these two isomorphisms of complexes.   

Thus, we may push our homology basis construction for patterns whose largest letter is leftmost  through these isomorphisms to get homology bases for $\I_n^{\sigma }$ for any pattern $\sigma $ (of any length) whose largest or smallest letter is leftmost or rightmost.
\end{proof}

This implies: 

\begin{cor}
Given any pattern $\sigma $ of length 3 or 4 not in the set $\{ 3142, 3412, 2143, 2413\} $, the 
complex $\I_n^{\sigma }$ admits a homology basis indexed by the homology facets of our shelling for $\I_n^{\sigma }$.
\end{cor}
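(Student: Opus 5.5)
This corollary should follow directly from Theorem~\ref{homology-theorem-two}, once we identify which patterns of length 3 or 4 are extremal-endpoint patterns. First consider length 3. Every $\sigma\in\S_3$ has its largest letter $3$ or its smallest letter $1$ in the first or last position. Indeed, the only other possibility is that $\sigma_1=\sigma_3=2$, which is absurd. So every pattern of length 3 is an extremal-endpoint pattern.

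Next consider $\sigma\in\S_4$. Suppose $\sigma$ is not an extremal-endpoint pattern. Then $\{\sigma_1,\sigma_4\}\cap\{1,4\}=\emptyset$, so $\{\sigma_1,\sigma_4\}=\{2,3\}$. The two middle letters are then $1$ and $4$ in some order. This leaves exactly the four permutations $3142$, $3412$, $2143$ and $2413$, the same observation already made in the proof of Proposition~\ref{cor:shellable-length4}. Hence every pattern of length 4 outside the excluded set is an extremal-endpoint pattern.

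For each such $\sigma$, Corollary~\ref{cor:extremal-endpoint} supplies a shelling of $\I_n^{\sigma}$. This shelling is transported from the lexicographic shelling of $\I_n^{\tau}$, where $\tau$ is the pattern among $\sigma,\sigma^R,\sigma^C,(\sigma^R)^C$ that starts with its largest letter. Theorem~\ref{homology-theorem-two} then gives a homology basis indexed by the homology facets of that shelling. Concretely, the basis consists of the images of the cycles $z_\pi$ of Definition~\ref{def:homology-cycle} under the isomorphisms of Propositions~\ref{isomorph-one} and~\ref{isomorph-two}.

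There is no real obstacle. The only care needed is to make the case check exhaustive, and to note that ``our shelling'' refers to the one from Corollary~\ref{cor:extremal-endpoint}. For $3412$ and $2143$ this matters: those complexes are shellable by Theorem~\ref{thm:3412}, but the corollary excludes them because the homology basis construction has not been established for them.
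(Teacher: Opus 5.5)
Your proposal is correct and follows essentially the same route as the paper: check that every pattern of length 3 or 4 outside $\{3142,3412,2143,2413\}$ is an extremal-endpoint pattern, then invoke Theorem~\ref{homology-theorem-two}. The only difference is cosmetic. The paper lists the six patterns of length 3 individually and cites Theorem~\ref{homology-basis-theorem} directly for $321$ and $312$, whereas your endpoint-pigeonhole argument and single appeal to Theorem~\ref{homology-theorem-two} cover the same cases uniformly.
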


\begin{proof}
First we note that the patterns  $321$ and $312$ each start with their largest letter, allowing us to use the results of the previous section, specifically Theorem~\ref{homology-basis-theorem}, to obtain a homology basis in either of these cases.   Next observe that the patterns 
$123$ and $213$ each end with their largest letter, allowing us to use Theorem~\ref{homology-theorem-two} to handle these cases.  The patterns $132$ and $231$ start (resp.\ end) with their smallest letter, so again Theorem~\ref{homology-theorem-two} covers these cases.  
This completes the proof for patterns of length 3.  

The proof for patterns of length 4 is similar.  The only patterns not covered by Theorem~\ref{homology-basis-theorem} or Theorem~\ref{homology-theorem-two} are those patterns of length 4 whose middle letters are 1 and 4, namely the patterns $2143$, $2413$, $3142$ and $3412$.  
\end{proof}

\section{Open questions}\label{sec:open}

There are many open questions regarding the complexes of injective words avoiding a pattern (or collection of patterns).  We collect some of the main ones below and also describe  data and partial results we have obtained supporting affirmative answers to some of these questions.  

We begin with four of our main questions, all gathered together within Question~\ref{qn:main}.

\begin{qn}\label{qn:main}
    For each of the following four conditions, determine for which patterns $\sigma$ the complex $\I_n^{\sigma }$ satisfies the condition for all $n$:
    \begin{enumerate}
       \item $\I_n^{\sigma }$ is pure of dimension $n-1$.
        \item  $\I_n^{\sigma }$ is shellable.
          \item The homology of $\I_n^{\sigma }$ is concentrated in top degree.
        \item The $h$-numbers of $\I_n^{\sigma }$ are all nonnegative.
    \end{enumerate}
\end{qn}

As discussed in Section~\ref{sec:shellability}, if a complex is pure and shellable, then its homology is concentrated in top degree and all its $h$-numbers are nonnegative. Thus, any pattern $\sigma$ satisfying conditions (1) and (2) will also satisfy (3) and (4). 
It is conceivable that all four conditions hold for every pattern $\sigma$ and every $n$, in the sense that we have not found any counterexamples.

We know by Corollary~\ref{cor:first-pattern-group} and Proposition~\ref{prop:second-pattern-group} that 
$\I_n^{\sigma }$ is pure of dimension $n-1$ for large classes of patterns $\sigma$, including any pattern of length at most 4, as stated in Corollary~\ref{cor:three-and-four-pure}. Additionally, we know by
Corollaries~\ref{cor:length3} and~\ref{cor:shellable-length4} 
that $\I_n^{\sigma }$ is shellable for any pattern of length at most 4 other than possibly $3142$ and $2413$.
We think there is a reasonable chance that the complexes given by these  two patterns
are shellable as well; it suffices to prove this for either one of them, since they give rise to  isomorphic complexes by virtue of either  pattern being the reversal of the other pattern.  This leads to the following question which we list separately from Question~\ref{qn:main} 
for the sake of emphasis.

\begin{qn}\label{3142}    
Is the complex $\I_n^{3142}$ (and hence $\I_n^{\sigma }$ for each  pattern $\sigma $ of length at most 4) shellable for every $n$?
\end{qn}

We have computationally verified that Question~\ref{3142} has an affirmative answer for all $n\le10$, by checking that the lexicographic order of the facets of $\I_n^{3142}$ is a shelling order.
We also have a proof, in analogy to Lemma~\ref{lem:3412blockers-sufficient}, that for any $\pi\in\S_n(3142)$ for which $\pi_i>\pi_{i+1}$, both of these entries $\pi_i$ and $\pi_{i+1}$ are $3412$-blockers of $\pi$ with respect to $\lex$.

We have also checked computationally that $\I^{\sigma}_n$ is shellable for every pattern $\sigma$ of length at most $5$ and every $n\le 9$.
In fact, in each such case, we have found that there is some $\tau\in\{\sigma,\sigma^R,\sigma^C,\sigma^{RC}\}$ for which the complex $\I_n^\tau$ (which we know is isomorphic to $\I_n^\sigma$ by Propositions~\ref{isomorph-one} and~\ref{isomorph-two})
is shellable with respect to $\lex$ (for $n\le9$). 
It is not clear, however, what conditions on $\tau$ enable this property. 
For example, of the patterns in the symmetry class $\{42513,31524,24153,35142\}$, only $\tau=35142$ has the property that $\I_9^{\tau}$ is shellable with respect to $\lex$; 
and of the patterns in the symmetry class $\{41352,25314\}$, only $\tau=25314$ has this property.

Note that if $\sigma$ and $\tau$ are Wilf-equivalent patterns, then the complexes 
$\I_n^{\sigma }$ and $\I_n^{\tau}$ have the same $h$-numbers as each other. In particular, since $3142$ and $2413$ are Wilf-equivalent to $4132$, and we know that $\I^{4132}_n$ is shellable by Theorem~\ref{thm:shellability_start_with_k}, we deduce that condition (4) in Question~\ref{qn:main} holds for every pattern of length up to 4. Note, however, that we cannot deduce shellability of $\I^{3142}_n$, since it is possible for complexes $\I^\sigma_n$ and $\I^\tau_n$ to be non-isomorphic even if $\sigma$ and $\tau$ are 
Wilf-equivalent, as shown in Proposition~\ref{prop:not_isomorphic}.

In light of our  homology basis construction, another natural question is the following. 

\begin{qn}
Are there homology bases analogous to those constructed in Section~\ref{homology-section} for the complexes of 3412-avoiding (resp.\ separable) injective words, with basis elements indexed by the homology facets of the shelling we have given for the complex of 3412-avoiding (resp.\ separable) injective words?
\end{qn}

One may also generalize Question~\ref{qn:main} to collections $\Sigma$ of patterns.  However, in that case one would likely first want to focus on part (1), namely the following question.

\begin{qn}
Which collections $\Sigma$ of patterns yield complexes $\I_n^\Sigma$ all of whose maximal faces are permutations in $\S_n$?  In other words, for which $\Sigma$ is $\I_n^{\Sigma}$ pure of dimension $n-1$?
\end{qn}

There exist  collections $\Sigma$ of patterns such that not all of the maximal faces in $\I_n^{\Sigma}$ are permutations.  For example, if $\Sigma$ consists of all possible patterns of length 3 then none of the faces in $\I_n^{\Sigma}$ can be greater than 1-dimensional regardless of how large $n$ is.

We end with a more open-ended question.  
Given how the derangement numbers arise from the full complexes of injective words as the rank of the top homology group  
and the Riordan numbers likewise come up in studying subcomplexes $\I_n^{\sigma }$ for any pattern $\sigma $ of length 3, it is natural also to ask the following question.

\begin{qn}
What other patterns $\sigma $ (or collections $\Sigma$ of patterns) yield pure complexes $\I_n^{\sigma }$ (resp.\ $\I_n^{\Sigma}$) with homology concentrated in top degree  for which the rank of the top homology group (or equivalently, the absolute value of the reduced Euler characteristic) is a combinatorially interesting sequence of numbers indexed by $n$?  In such cases, does one obtain any  interesting identities  by comparing the reduced Euler characteristic calculated  as an alternating sum of Betti numbers  with  the reduced Euler characteristic expressions  for these same  complexes  as alternating sums of face numbers?  
\end{qn}

\section*{AI Statement}

AI was not used in any manner in obtaining our mathematical results or in our exposition of the results.

\section*{Acknowledgements}
The first author was partially supported by Simons Collaboration Grant \#929653.   The third author was partially supported by Simons Travel Funding for Mathematicians.

\end{document}